\newtheorem{thm}{Theorem}[section]
\newtheorem{prop}{Proposition}[section]
\newtheorem{rmk}{Remark}[section]
\newtheorem{lem}{Lemma}[section]
\newtheorem{defi}{Definition}[section]
\newtheorem{algo}{Algorithm}
\newenvironment{proof}{\paragraph{Proof:}}{\hfill$\square$ \\}
\numberwithin{equation}{section}
\providecommand{\keywords}[1]
{
	\small	
	\textbf{\textit{Keywords:}} #1
}
\providecommand{\pacs}[1]
{
	\small	
	\textbf{\textit{PACS:}} #1
}
\providecommand{\MSC}[1]
{
	\small	
	\textbf{\textit{2010 MSC:}} #1
}
\title{Numerical and convergence analysis of the stochastic Lagrangian averaged Navier-Stokes equations \footnote{Jad Doghman is supported by a public grant as part of the Investissement d'avenir project [ANR-11-LABX-0056-LMH, LabEx LMH], and both authors are part of the SIMALIN project [ANR-19-CE40-0016] of the French National Research Agency.}}
\author[ ]{Jad Doghman$^{1}$\thanks{Corresponding author\\ \mbox{\hspace{5pt} }Email addresses: \texttt{jad.doghman@centralesupelec.fr} (J. Doghman),\\ \texttt{goudenege@math.cnrs.fr} (L. Goudenège)}}
\author[ ]{Ludovic Gouden{\`e}ge$^{1}$}
\affil[1]{CNRS, Fédération de Mathématiques de CentraleSupélec FR 3487, Univ. Paris-Saclay, CentraleSupélec, F-91190 Gif-sur-Yvette, France}
\date{}
\begin{document}
	\maketitle
	\begin{abstract}
		The primary emphasis of this work is the development of a finite element based space-time discretization for solving the stochastic Lagrangian averaged Navier-Stokes (LANS-$\alpha$) equations of incompressible fluid turbulence with multiplicative random forcing, under nonperiodic boundary conditions within a bounded polygonal (or polyhedral) domain of $\mathbb{R}^{d}$, $d \in \{2,3\}$. The convergence analysis of a fully discretized numerical scheme is investigated and split into two cases according to the spacial scale $\alpha$, namely we first assume $\alpha$ to be controlled by the step size of the space discretization so that it vanishes when passing to the limit, then we provide an alternative study when $\alpha$ is fixed. A preparatory analysis of uniform estimates in both $\alpha$ and discretization parameters is carried out. Starting out from the stochastic LANS-$\alpha$ model, we achieve convergence toward the continuous strong solutions of the stochastic Navier-Stokes equations in $2$D when $\alpha$ vanishes at the limit. Additionally, convergence toward the continuous strong solutions of the stochastic LANS-$\alpha$ model is accomplished if $\alpha$ is fixed.
	\end{abstract}
	
	\keywords{stochastic Lagrangian averaged Navier-Stokes, stochastic Navier-Stokes, finite element, Euler method}
	
	\pacs{02.70.Dh, 02.70.Bf, 02.60.Lj, 02.60.Jh, 02.60.Gf, 02.60.Cb, 02.50.Ey, 02.50.Cw}
	
	\MSC{76D05, 65M12, 35Q35, 35Q30, 60H15, 60H35}
	
	\newpage
	
	\section{Introduction}\label{sec;introduction}
	Over the last few decades, many regularization models of the Navier-Stokes equations (NSEs) have arisen, especially the $\alpha$-regularizations, for the sake of better understanding the closure problem of averaged quantities in turbulent flows. Such turbulent modeling schemes (e.g. Leray-$\alpha$, Navier-Stokes-$\alpha$, Clark-$\alpha$, Modified Leray-$\alpha$) were introduced as effective subgrid-scale models of the NSEs which require massive grid points or Fourier modes, allowing for approximation to capture all the spatial scales down to the Kolmogorov scale (see for instance~\cite{Cao2009Titi} and the references therein), as well as their suitability with the empirical and experimental data for a thorough range of Reynolds numbers.
	
	In the present paper, we consider the stochastic version of the LANS-$\alpha$ equations \cite{MarsdenShkoller} (also known as the viscous Camassa-Holm equations \cite{BJORLAND2008907}, or the Navier-Stokes-$\alpha$ model \cite{doi:10.1063/1.870096, HolmMarsdenRatiu:98b})
	\begin{equation}\label{main equation}
		\begin{cases}
			\begin{aligned}
				&\partial_{t}\left(\bar{u} - \alpha^{2}\Delta \bar{u}\right) - \nu \Delta\left(\bar{u} - \alpha^{2}\Delta \bar{u}\right) + [\bar{u}\cdot\nabla] \left(\bar{u} - \alpha^{2}\Delta \bar{u}\right) + \left(\nabla \bar{u}\right)^{T}\cdot (\bar{u} - \alpha^{2}\Delta \bar{u}) + \nabla p \\& \hspace{20pt} = f(\cdot, \bar{u}) + g(\cdot, \bar{u})\dot{W},
			\end{aligned}\\
			div \ \bar{u} = 0,\\
			\bar{u}(0,\cdot) = \bar{u}_{0},
		\end{cases}
	\end{equation}
	for internal flow i.e. for a bounded domain in $\mathbb{R}^{d}$, $d \in \{2,3\}$. The unknown vector field $\bar{u}$ is called the filtered fluid velocity, and it depends on time and space variables, $\nu$ is the fluid kinematic viscosity, and $\alpha$ is a small spatial scale at which fluid motion is filtered. Note that both $\nu$ and $\alpha$ are positive constants. $f = f(t, \bar{u})$ is an external force, the scalar quantity $p = p(t,x)$ represents the pressure and $\bar{u}_{0}$ is the corresponding initial datum. The last term of equations \eqref{main equation}$_{1}$ describes a state-dependent random noise, and it is defined by $\displaystyle g(\cdot, \bar{u})\dot{W} \coloneqq g(t, \bar{u})\partial_{t} W(t,x)$, where $g$ is a diffusion coefficient. One of the aims herein is to approach the two-dimensional solutions of the stochastic NSEs via the LANS-$\alpha$ model, numerically. Whence the need to evoke the former equations with similar configurations:
	\begin{equation}\label{Navier-Stokes}
		\begin{cases}
			\begin{aligned}
				&\partial_{t}v  - \nu \Delta v + [v \cdot\nabla]v + \nabla \lambda = f(\cdot, v) + g(\cdot, v)\dot{W},
			\end{aligned}\\
			div \ v = 0,\\
			v(0,\cdot) = v_{0},
		\end{cases}
	\end{equation}
	where $v$ (resp. $\lambda$) is the corresponding fluid velocity (resp. pressure), and $v_{0}$ embodies its initial datum.
	
	Equations~\eqref{main equation} and \eqref{Navier-Stokes} are usually employed as a complementary model to their deterministic versions to better understand the situation of tiny variations or perturbations present in fluid flows. The former represents a modification of the latter by performing Lagrangian means, asymptotic expansions, and an assumption of isotropy of fluctuations in the Hamilton principle, which grant further physical properties (e.g. conservation laws for energy and momentum). More specifically, the convective nonlinearity $\left[v\cdot\nabla\right]v$ in the NSEs is adjusted so that the cascading of turbulence at scales under specific length stops. The latter adjustment is called a nonlinearly dispersive modification.
	
	For stochastic hydrodynamical systems, a general theory was developed in \cite{ChueshovMillet2009}, involving  well-posedness and large deviations. It covers, for instance, $3$D Leray-$\alpha$ model, but not completely the LANS-$\alpha$ equations which, in parallel, have been a subject of several papers. The existence and uniqueness of a variational solution to the problem \eqref{main equation} were investigated in \cite{Caraballo2006Takeshi} under Lipschitz-continuous conditions in a three-dimensional bounded domain. A similar study is proposed in \cite{deugoue2009stochastic}, but this time with a genuine finite-dimensional Wiener process depending only on time. LANS-$\alpha$ model driven by an additive space-time noise of trace class was considered in \cite{goudenege:hal-02615640}, where the authors proved the existence and uniqueness of an invariant measure, and a probabilistic strong solution.
	
	Speaking of the numerical approach side, the convergence analysis of suitable numerical methods for the stochastic LANS-$\alpha$ equations is less well developed. In connection with the deterministic version, both convergence rate and convergence analysis of an algorithm consisting of a finite element method were investigated in \cite{Connors2010} where the spatial scale $\alpha$ is considered in terms of the space discretization's step. The author in \cite{ccaglar2010convergence} conducted a similar study, with $\alpha$ being independent of the discretization parameters. On the other hand, numerical schemes for stochastic nonlinear equations admitting local Lipschitz nonlinearities related to the Navier-Stokes systems had been already investigated. For instance, authors in \cite{brzezniak2013finite} studied a finite element-based space-time discretization of the incompressible NSEs driven by a multiplicative noise. An enhancement of \cite{brzezniak2013finite} in dimension $2$ was carried out in \cite{bessaih2020spacetime}.
	
	This paper aims to provide a fully discrete finite element-based discretization of equations~\eqref{main equation} in a bounded convex polygonal or polyhedral domain. Notice that the underlying model consists of a fourth order problem, nevertheless we avoid the use of $C^{1}$ piecewise polynomials-based finite element methods by introducing a notion of differential filters that transform equations~\eqref{main equation} into a coupled problem of second order. The employed time-discretization herein is an Euler scheme. One highly valued characteristic of the finite element method is the prospect of meticulous interpretation provided by the functional analysis framework. In contrast to the linear stochastic partial differential equations, since we are dealing here with a nonlinear model, one cannot make use of the semigroup method or Green's function. Those techniques are effectively replaced by monotone or Lipschitz-continuous drift functions. It is worth highlighting the importance of constructing practical numerical schemes provided with exact divergence-free finite element functions. However, due to their computational complexity, one may notice the usage of a weak divergence-free condition that compensates for the strong sense's absence.
	
	The associated spatial scale $\alpha$ will be considered hereafter either in terms of the space discretization's step (case~$1$) or independently of all discretization's parameters (case~$2$). Therefore, our main results consist of the convergence in both $2$D and $3$D of Algorithm~\ref{Algorithm} toward the continuous solution of the $2$D stochastic NSEs for the case~$1$, together with the convergence toward the continuous solution of the stochastic LANS-$\alpha$ model for the case~$2$. Speaking of the followed approach, we begin by performing a priori estimates characterized by their uniformity in $\alpha$ and the discretizations' parameters, allowing us to extract convergent subsequences of the approximate solution. We also demonstrate iterates' uniqueness, thanks to the scheme's linearity. The second step consists in applying two compactness lemmas (Lemmas~\ref{lemma convergence} and \ref{lemma convergence discrete}) to achieve strong convergence of the iterates in the same probability space, toward the NSEs and the LANS-$\alpha$ model. In other words, we do not use the Skorokhod theorem. Although the mentioned compactness lemmas do not include spaces with randomness, we accomplish a preliminary convergence through a suggested sample subset $\Omega_{k,h}^{\varepsilon} \subset \Omega$, whose measure is bigger than $1 - \varepsilon$, with $\varepsilon$ being independent of $\alpha$ and the discretizations' parameters. The latter allows us to demonstrate the strong convergence in the whole probability set $\Omega$. Afterwards, we identify the limit of each term of Algorithm~\ref{Algorithm} with its continuous (in time) counterpart.
	
	The paper is organized as follows. We introduce in section \ref{section notations preliminaries} a few notions and preliminaries, including the spatial framework, the needed assumptions, the time and space discretizations alongside their properties, definition of solutions to problems~\eqref{main equation} and \eqref{Navier-Stokes}, the definition of continuous and discrete differential filters, the investigated algorithm together with the needed compactness lemmas. Section~\ref{section3} is tailored for the main results of this paper. We exhibit part of the proofs in section~\ref{section4}, where we focus on the a priori estimates and iterates' uniqueness. In section~\ref{section5}, we study the convergence analysis of the corresponding numerical scheme. Accordingly, we identify both deterministic and stochastic integrals, as the discretization steps tend to $0$, with the corresponding exact solution. We terminate this paper (section~\ref{section conclusion}) with a conclusion concerning the obtained limiting functions and how one can relate them to the stochastic NSEs and LANS-$\alpha$ model. We equip this section with a computational experiment to visualize the outcomes and to evaluate the performance of the proposed numerical scheme.
	
	\section{Notations and preliminaries}\label{section notations preliminaries}
	We state, in this section, preliminary background material following the usual notation employed in the context of the mathematical theory of Navier-Stokes equations.\\
	Given $T > 0$, we denote by $D \subset \mathbb{R}^{d}$, $d \in \{2,3\}$ a bounded convex polygonal or polyhedral domain with boundary $\partial D$, in which we seek a solution, namely a stochastic process $\Big(\bar{u}(t), p(t)\Big), t \in [0,T]$ satisfying equations~\eqref{main equation} in a certain sense. Define almost everywhere on $\partial D$ the unit outward normal vector field $\vec{n} \colon \partial D \to \mathbb{R}^{d}$. The following function spaces are required hereafter:
	\begin{align*}
		&\mathcal{V} \coloneqq \bigg\{z \in [C_{c}^{\infty}(D)]^{d} \ \big| \ div \ z = 0\bigg\},
		\\&\mathbb{H} \coloneqq \bigg\{z \in \left(L^{2}(D)\right)^{d} \ \big| \ div \ z = 0 \mbox{ a.e. in } D, \ z.\vec{n} = 0 \mbox{ a.e. on } \partial D\bigg\},
		\\&\mathbb{V} \coloneqq \bigg\{z \in \left(H^{1}_{0}(D)\right)^{d} \ \big| \ div \ z = 0 \mbox{ a.e. in } D\bigg\}.
	\end{align*}
	From now on, the spaces of vector valued functions will be indicated with blackboard bold letters, for instance $\mathbb{L}^{2} \coloneqq \left(L^{2}(D)\right)^{d}$ denotes the Lebesgue space of vector valued functions defined on $D$. Denote by $\mathscr{P} \colon \mathbb{L}^{2} \to \mathbb{H}$ the Leray projector, and by $A \colon D(A) \to \mathbb{H}$ the Stokes operator defined by $A \coloneqq -\mathscr{P}\Delta$ with domain $D(A) = \mathbb{H}^{2} \cap \mathbb{V}$. $A$ is a self-adjoint positive operator, and has a compact inverse, see for instance \cite{RSMUP_1961__31__308_0}. Let $\left(\Omega, \mathcal{F}, \left(\mathcal{F}_{t}\right)_{t \in [0,T]}, \mathbb{P}\right)$ be a complete probability space, $Q$ a nuclear operator, and denoted by $K$ a separable Hilbert space on which we define the $Q$-Wiener process $W(t), \ t \in [0,T]$ such that 
	\begin{equation}\label{def Wiener process}
		W(t) = \sum_{k \in \mathbb{N}}\sqrt{q_{k}}\beta^{k}(t)w_{k}, \ \ \ \ \ \forall t \in [0,T],
	\end{equation} 
	where $\displaystyle \{\beta^{k}(\cdot), \ k \in \mathbb{N}\}$ is a sequence of independent and identically distributed $\mathbb{R}$-valued Brownian motions on the probability basis $\displaystyle \left(\Omega, \mathcal{F}, \left(\mathcal{F}_{t}\right)_{t \in [0,T]}, \mathbb{P}\right)$, $\{w_{k}, \ k \in \mathbb{N}\}$ is a complete orthonormal basis of the Hilbert space $K$ consisting of the eigenfunctions of $Q$, with eigenvalues $\{q_{k}\}_{k \in \mathbb{N}^{*}}$. The following estimate will play an essential role in the sequel, cf.~\cite{ichikawa1982stability}.
	\begin{equation}\label{eq 2.3.4}
		\mathbb{E}\left[\left|\left|W(t) - W(s)\right|\right|^{2r}_{K}\right] \leq \left(2r - 1\right)!!\left(t - s\right)^{r}\left(Tr(Q)\right)^{r}, \ \ \ \ \forall r \in \mathbb{N},
	\end{equation}
	where $\displaystyle \left(2r - 1\right)!! \coloneqq (2r - 1)(2r - 3)\dotsc \times 5 \times 3 \times 1$.
	
	For any arbitrary Hilbert spaces $X, Y$, the sets $\mathscr{L}_{1}(X, Y)$ and $\mathscr{L}_{2}(X, Y)$ denote the nuclear, and Hilbert-Schmidt operators from $X$ to $Y$, respectively. For brevity's sake, if $X = Y$, we set $\mathscr{L}_{i}(X,X) = \mathscr{L}_{i}(X), i\{1,2\}$. Hereafter, $\displaystyle M^{p}_{\mathcal{F}_{t}}(0,T; X)$ denotes the space of all $\mathcal{F}_{t}$-progressively measurable processes belonging to $L^{p}\left(\Omega \times (0,T), d\mathbb{P}\times dt; X\right)$, for any Banach space $X$.
	
	Throughout this paper, the nonnegative constant $C_{D}$ depends only on the domain $D$, the symbols $(\cdot,\cdot)$ and $\langle \cdot,\cdot \rangle$ stand for the inner product in $\mathbb{L}^{2}$ and the duality product between $\mathbb{H}^{-1}$ and $\mathbb{H}^{1}$, respectively.
	Recall that $\alpha$ is a small spatial scale, thereby we assume that $\displaystyle \alpha \leq 1$. The latter leads to the following norm equivalence
	\begin{equation}\label{eq norm equivalence}
		\alpha||\cdot||_{\mathbb{H}^{1}} \leq \left|\left|\cdot\right|\right|_{\alpha} \leq ||\cdot||_{\mathbb{H}^{1}},
	\end{equation}
	where $\left|\left|\cdot\right|\right|_{\alpha}$ is defined by $\displaystyle \left|\left|\cdot\right|\right|_{\alpha}^{2} \coloneqq \left|\left|\cdot\right|\right|^{2}_{\mathbb{L}^{2}} + \alpha^{2}\left|\left|\nabla\cdot\right|\right|^{2}_{\mathbb{L}^{2}}$. We point out that the whole study herein maintains all the stated properties if one chooses $\alpha \leq \alpha_{0}$, for some $\alpha_{0} \in \mathbb{R}^{*}_{+}$.
	For arbitrary real numbers $x, y$, the inequality $x \lesssim y$ is a shorthand for $x \leq cy$ for some universal constant $c > 0$.
	
	We list the assumptions needed below for the data $\bar{u}_{0}$, $g, Q$, and $f$.
	\paragraph{\textbf{Assumptions}}\label{General assumptions}
	\begin{enumerate}
		\item[$(S_{1})$] $Q \in \mathscr{L}_{1}(K)$ is a symmetric, positive definite operator,
		\item[$(S_{2})$] $f \in L^{2}(\Omega; C([0,T]; \mathbb{H}^{-1}))$ and $g \in L^{2}(\Omega; C([0,T]; \mathscr{L}_{2}(K, \mathbb{L}^{2})))$ are sublinear Lipschitz-continuous mappings, i.e. for all $z_{1}, z_{2} \in \mathbb{V}$, $g(\cdot, z_{1})$ and $f(\cdot, z_{2})$ are $\mathcal{F}_{t}$-progressively measurable, and $d\mathbb{P}\times dt$-a.e. in $\Omega\times (0,T)$, 
		\begin{align*}
			&\left|\left|g(\cdot, z_{1}) - g(\cdot, z_{2})\right|\right|_{\mathscr{L}_{2}(K, \mathbb{L}^{2})} \leq L_{g}\left|\left|z_{1} - z_{2}\right|\right|_{\alpha}, \ \ \ \ \ \forall z_{1}, z_{2} \in \mathbb{V},\\&
			\left|\left|g(\cdot, z)\right|\right|_{\mathscr{L}_{2}(K,\mathbb{L}^{2})} \leq K_{1} + K_{2}\left|\left|z\right|\right|_{\alpha}, \ \ \ \forall z \in \mathbb{V},\\&
			\left|\left|f(\cdot, z_{1}) - f(\cdot, z_{2})\right|\right|_{\mathbb{H}^{-1}} \leq L_{f}\left|\left|z_{1} - z_{2}\right|\right|_{\alpha}, \ \ \ \ \ \forall z_{1}, z_{2} \in \mathbb{V},\\&
			\left|\left|f(\cdot, z)\right|\right|_{\mathbb{H}^{-1}} \leq K_{3} + K_{4}\left|\left|z\right|\right|_{\alpha}, \ \ \ \ \forall z \in \mathbb{V},
		\end{align*}
		for some time-independent nonnegative constants $K_{1}, K_{2}, K_{3}, K_{4}, L_{f}, L_{g}$.
		\item[$(S_{3})$] $\bar{u}_{0} \in L^{4}(\Omega, \mathcal{F}_{0}, \mathbb{P}; \mathbb{V})$.
	\end{enumerate}
	
	To avoid repetitions later on, we state the following assertions
	\begin{align}
		&\Big(a - b, a\Big) = \frac{1}{2}\left(\left|\left|a\right|\right|_{\mathbb{L^{2}}}^{2} - \left|\left|b\right|\right|_{\mathbb{L}^{2}}^{2} + \left|\left|a-b\right|\right|_{\mathbb{L}^{2}}^{2}\right) \ \ \mbox{ for all } a, b \in \mathbb{L}^{2},\label{eq identity (a-b,a)}  \\&
		\left(\sum_{m=1}^{M}\left|a_{m}\right|\right)^{2} \leq 3\sum_{m=1}^{M}\left|a_{m}\right|^{2}, \ \ \forall M \in \mathbb{N}\backslash\{0\}. \label{eq sum estimate}
	\end{align}
	
	\paragraph{\textbf{The trilinear form}} We define the trilinear form $\tilde{b}$, associated with the LANS-$\alpha$ equations, by $$\tilde{b}(z_{1}, z_{2}, w) = \Big\langle [z_{1}\cdot \nabla]z_{2}, w \Big\rangle + \Big\langle \left(\nabla z_{1}\right)^{T}\cdot z_{2}, w \Big\rangle, \ \ \ \ \forall z_{1}, z_{2}, w \in \mathbb{H}_{0}^{1}.$$
	We list in the following proposition a few corresponding properties.
	\begin{prop}\label{prop trilinear form}\
		\begin{enumerate}
			\item[(i)] $\displaystyle \tilde{b}(z_{1}, z_{2}, w) = -\tilde{b}(w, z_{2}, z_{1})$, for all $z_{1}, w \in \mathbb{V}$, $z_{2} \in \mathbb{H}^{1}_{0}$,
			\item[(ii)] $\displaystyle \tilde{b}(z_{1}, z_{2}, z_{1}) = 0, \ \ $ for all $(z_{1}, z_{2}) \in \mathbb{V}\times \mathbb{H}^{1}_{0}$,
			\item[(iii)] $\left|\tilde{b}(z_{1}, z_{2}, w)\right| \leq C_{D}\left|\left|z_{1}\right|\right|_{\mathbb{H}^{1}}\left|\left|z_{2}\right|\right|_{\mathbb{H}^{1}}\left|\left|w\right|\right|_{\mathbb{H}^{1}}$, for all $z_{1}, z_{2}, w \in \mathbb{H}^{1}_{0}$,
			\item[(iv)] $\displaystyle \int_{D}(\nabla z_{1})^{T}\cdot z_{2} wdx = - \int_{D}[w\cdot\nabla]z_{2}z_{1}dx$, for all $z_{1}, z_{2} \in \mathbb{H}^{1}_{0}$ and $w \in \mathbb{V}$.
		\end{enumerate}
	\end{prop}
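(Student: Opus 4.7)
The four properties are essentially consequences of two facts: an integration by parts identity making use of $\text{div}\,w=0$, and the Sobolev embedding $\mathbb{H}^{1}(D)\hookrightarrow \mathbb{L}^{4}(D)$ valid in dimensions $d\in\{2,3\}$ on a bounded convex polygonal/polyhedral domain. My plan is therefore to establish item (iv) first, since both (i) and (ii) fall out of it almost immediately, and to handle (iii) independently at the end.

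For (iv), I would write the integrand in components, $\int_{D}(\nabla z_{1})^{T}z_{2}\cdot w\,dx=\int_{D}(\partial_{i}z_{1}^{j})\,z_{2}^{j}\,w^{i}\,dx$, and integrate by parts in the $i$-variable. The boundary term vanishes because $z_{1}\in\mathbb{H}_{0}^{1}$ (trace zero), leaving
\[
-\int_{D}z_{1}^{j}\,(\partial_{i}z_{2}^{j})\,w^{i}\,dx-\int_{D}z_{1}^{j}z_{2}^{j}\,(\partial_{i}w^{i})\,dx.
\]
The second integral is zero since $w\in\mathcal{V}\subset\mathbb{V}$ is divergence-free, and the first is exactly $-\int_{D}[w\cdot\nabla]z_{2}\cdot z_{1}\,dx$, which is what (iv) claims.

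Items (i) and (ii) are then easy consequences. For (i), I would simply apply (iv) to the second summand of $\tilde{b}(z_{1},z_{2},w)$, obtaining
\[
\tilde{b}(z_{1},z_{2},w)=\int_{D}[z_{1}\cdot\nabla]z_{2}\cdot w\,dx-\int_{D}[w\cdot\nabla]z_{2}\cdot z_{1}\,dx,
\]
and notice that applying the same identity to $\tilde{b}(w,z_{2},z_{1})$ yields the same expression with the two terms swapped (and thus with an overall minus sign), giving the antisymmetry $\tilde{b}(z_{1},z_{2},w)=-\tilde{b}(w,z_{2},z_{1})$. Item (ii) then follows by specializing $w=z_{1}$ in (i), which forces $\tilde{b}(z_{1},z_{2},z_{1})=-\tilde{b}(z_{1},z_{2},z_{1})=0$.

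For (iii), I would estimate each of the two summands defining $\tilde{b}$ separately by Hölder's inequality with exponents $(4,2,4)$, namely
\[
\left|\int_{D}[z_{1}\cdot\nabla]z_{2}\cdot w\,dx\right|\leq \|z_{1}\|_{\mathbb{L}^{4}}\,\|\nabla z_{2}\|_{\mathbb{L}^{2}}\,\|w\|_{\mathbb{L}^{4}},
\]
and analogously for $\int_{D}(\nabla z_{1})^{T}z_{2}\cdot w\,dx$ with the roles of $z_{1}$ and $z_{2}$ reversed in the gradient placement. The continuous Sobolev embedding $\mathbb{H}^{1}(D)\hookrightarrow\mathbb{L}^{4}(D)$ (which holds on the bounded convex polygonal/polyhedral domain $D$ since $d\leq 3$) then absorbs the $\mathbb{L}^{4}$ norms into $\mathbb{H}^{1}$ norms at the cost of a domain-dependent constant $C_{D}$, producing the stated bound. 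The only mild subtlety is ensuring the Sobolev constant is uniform for both $d=2$ and $d=3$; no obstacle is expected, as the embedding is well known on such domains. No heavy machinery is needed, and there is no genuine main obstacle; (iv) is the load-bearing step and it is just a careful divergence-aware integration by parts.
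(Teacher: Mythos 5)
Your proof is correct, and it is worth noting that the paper itself does not prove this proposition at all: it simply cites \cite{Foias2002Camassa-holm} (Lemma~1) for items $(i)$--$(iii)$ and \cite{Caraballo2006Takeshi} (Proposition~2.1) for $(iv)$. Your self-contained argument is essentially the standard one underlying those references, and your ordering --- prove $(iv)$ by a divergence-aware integration by parts, deduce $(i)$ by rewriting both $\tilde{b}(z_{1},z_{2},w)$ and $\tilde{b}(w,z_{2},z_{1})$ as the same two integrals with opposite signs, get $(ii)$ by setting $w=z_{1}$, and handle $(iii)$ by H\"older with exponents $(4,2,4)$ plus the embedding $\mathbb{H}^{1}\hookrightarrow\mathbb{L}^{4}$ for $d\leq 3$ --- is clean and consistent with the paper's convention for $(\nabla z_{1})^{T}\cdot z_{2}$ (compare the componentwise expansion of $\tilde{b}(\bar{u},\nabla\tilde{p},\phi)$ given after Definition~\ref{definition variational solution}). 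Two small points to tighten: the integration by parts in $(iv)$ is carried out on functions that are merely $\mathbb{H}^{1}_{0}$ (resp.\ $\mathbb{V}$), so strictly one should either approximate $z_{1}$ by smooth compactly supported fields and pass to the limit (the trilinear integrals converge precisely because of the $\mathbb{L}^{4}$ bounds you already invoke for $(iii)$), or invoke a generalized Gauss--Green formula for $W^{1,3/2}$ products; and your phrase ``$w\in\mathcal{V}\subset\mathbb{V}$'' is a slip, since $w$ is only assumed to lie in $\mathbb{V}$ --- but that is enough, as elements of $\mathbb{V}$ are divergence-free almost everywhere, which is all the argument uses.
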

	\begin{proof}
		For $(i)$-$(iii)$, see for instance \cite[Lemma 1]{Foias2002Camassa-holm}, and the last assertion was covered in \cite[Proposition 2.1]{Caraballo2006Takeshi} with a slight modification here regarding the utilized spaces.
	\end{proof}
	
	It is well-known that finite element methods based on $C^{1}$ piecewise polynomials are not easily implementable. This means that our fourth-order partial differential equation~\eqref{main equation} must undergo a modification so that it turns into a second-order problem. To this end, we shall propose a differential filter that deals with a Stokes problem. Such an idea emerges from \cite{Germano1986} within a slight adjustment for the sake of fitting the current framework. The divergence-free condition in the definition below is not mandatory as one can always use the Helmholtz decomposition to subsume the resulting gradient term within $\nabla \tilde{p}$.
	\begin{defi}[Continuous differential filter]\label{def cont diff filter}\ \\
		Given a (divergence-free) vector field $v \in \mathbb{L}^{2}$ vanishing on $\partial D$, its continuous differential filter, denoted by $\bar{u}$, is part of the unique solution $\left(\bar{u}, \tilde{p}\right) \in \mathbb{V}\times L_{0}^{2}(D)$ to
		\begin{equation}\label{eq Stokes}
			\begin{cases}
				\begin{aligned}
					&-\alpha^{2}\Delta \bar{u}  + \bar{u} + \nabla \tilde{p}= v, &\mbox{ in } D,\\
					&div \ \bar{u} = 0, &\mbox{ in } D,\\
					&\bar{u} = 0,  &\mbox{ on } \partial D.
				\end{aligned}
			\end{cases}
		\end{equation} 
	\end{defi}
	Note that the differential filter of a function $v$ is usually denoted by $\bar{v}$. Nevertheless, the employed notation herein will be $\bar{u}$ to obtain a clear vision of the relationship between the differential filter and equations~\eqref{main equation}. For a given $v \in \mathbb{L}^{2}$, problem~\eqref{eq Stokes} yields a unique $\bar{u} \in \mathbb{H}^{2}\cap \mathbb{V}$ provided that $D \subset \mathbb{R}^{d},$ is a bounded convex two-dimensional polygonal (three-dimensional polyhedral) domain. Moreover, the solution $\bar{u}$ satisfies $\displaystyle \left|\left|\bar{u}\right|\right|_{\mathbb{H}^{2}} \leq C_{D}\alpha^{-2}\left|\left|v\right|\right|_{\mathbb{L}^{2}}$. The former and the latter property are provided in \cite[Subsection 8.2]{grisvard2011elliptic}. Observe that $v$ in equations~\eqref{eq Stokes} is assumed to be null on $\partial D$ due to the occurring equality $\bar{u} = v$ when one passes to the limit in $\alpha$ after projecting \eqref{eq Stokes}$_{1}$ using the Leray projector $\mathscr{P}$.
	
	\subsection{Definition of solutions}
	Relying on paper~\cite{Caraballo2006Takeshi}, a solution to equations~\eqref{main equation} can be defined as follows:
	\begin{defi}\label{definition variational solution}\ \\
		Let $T > 0$ and assume that $(S_{1})$-$(S_{3})$ are valid. A $\mathbb{V}$-valued stochastic process $\bar{u}(t), \ t \in [0,T]$ is said to be a variational solution to problem \eqref{main equation} if it fulfills the following conditions:
		\begin{enumerate}
			\item[(i)] $\bar{u} \in M^{2}_{\mathcal{F}_{t}}(0,T; 
			D(A)) \cap L^{2}\left(\Omega; L^{\infty}(0,T; \mathbb{V})\right)$,
			\item[(ii)] $\bar{u}$ is weakly continuous with values in $\mathbb{V}$, $\mathbb{P}$-almost surely, 
			\item[(iii)] for all $t \in [0,T]$, $\bar{u}$ satisfies the following equation $\mathbb{P}$-almost surely
			\begin{equation}\label{eq definition solution}
				\begin{aligned}
					&\left(\bar{u}(t), \phi\right) + \alpha^{2}\left(\nabla \bar{u}(t), \nabla \phi\right) + \nu \int_{0}^{t}\Big(\bar{u}(s) + \alpha^{2}A\bar{u}(s), A\phi\Big)ds \\&+ \int_{0}^{t}\tilde{b}\left(\bar{u}(s), \bar{u}(s) - \alpha^{2}\Delta \bar{u}(s), \phi\right)ds = \left(\bar{u}_{0}, \phi\right) + \alpha^{2}\left(\nabla \bar{u}_{0}, \nabla \phi\right) \\&+ \int_{0}^{t}\Big\langle f(s, \bar{u}(s)), \phi \Big\rangle ds + \Big(\int_{0}^{t}g\left(s, \bar{u}(s)\right)dW(s), \phi\Big), \ \ \forall \phi \in D(A).
				\end{aligned}
			\end{equation}
		\end{enumerate}
	\end{defi}
	If $\bar{u}$ is a solution to problem~\eqref{main equation} in the sense of Definition~\ref{definition variational solution}, then considering $v = v(t)$ as in problem~\eqref{eq Stokes} grants a new (equivalent) formula for equation~\eqref{eq definition solution}, namely for all $t \in [0,T]$, there holds $\mathbb{P}$-almost surely 
	\begin{equation}\label{eq definiton solution modified}
		\begin{aligned}
			&\left(v(t), \phi\right) + \nu\int_{0}^{t}\left(\nabla v(s), \nabla \phi\right)ds + \int_{0}^{t}\tilde{b}(\bar{u}(s), v(s), \phi)ds = \left(v_{0}, \phi\right) \\&+ \int_{0}^{t}\big\langle f(s,\bar{u}(s)), \phi\big\rangle ds + \left(\int_{0}^{t}g(s, \bar{u}(s))dW(s), \phi\right), \ \ \forall \phi \in D(A),
		\end{aligned}
	\end{equation}
	where $v_{0} \in \mathbb{L}^{2}$ is given by equation~\eqref{eq Stokes} when $\bar{u} = \bar{u}_{0}$. The trilinear term involving the pressure $\int_{0}^{t}\tilde{b}(\bar{u}(s), \nabla\tilde{p}(s),\phi)ds$ does not appear in equation~\eqref{eq definiton solution modified} because
	$$\displaystyle\tilde{b}(\bar{u},\nabla\tilde{p},\phi) = \sum_{i,j=1}^{d}\int_{D}\bar{u}^{i}\partial_{i}\partial_{j}\tilde{p}\phi^{j} dx + \sum_{i,j=1}^{d}\int_{D}\partial_{i}\bar{u}^{j}\partial_{j}\tilde{p}\phi^{i} dx.$$
	The first term on the right-hand side turns into $-\int_{D}[\phi\cdot\nabla]\bar{u}\nabla\tilde{p}dx$ after performing an integration by parts, and the second term can be rewritten as $\int_{D}[\phi\cdot\nabla]\bar{u}\nabla\tilde{p}dx$.
	It is worth mentioning that \eqref{eq definiton solution modified}, coupled with the weak formulation of \eqref{eq Stokes}, establishes a well-posed problem whose solution satisfies equations~\eqref{main equation} in the sense of Definition~\ref{definition variational solution}.
	
	Next, we give a definition of strong solutions to problem~\eqref{Navier-Stokes} in $2$D.
	\begin{defi}\label{def NS solution}\ \\
		Given $T > 0$, let assumptions $(S_{1})$ and $(S_{2})$ be fulfilled, $d =2$ and $v_{0} \in L^{2}(\Omega, \mathcal{F}_{0}, \mathbb{P}; \mathbb{H})$ be the initial datum. An $\mathbb{H}$-valued stochastic process $v(t), t \in [0,T]$ is said to be a strong solution to equations~\eqref{Navier-Stokes} if it satisfies:
		\begin{enumerate}
			\item[(i)] $v \in M_{\mathcal{F}_{t}}^{2}(0,T; \mathbb{V}) \cap L^{2}(\Omega; C([0,T]; \mathbb{H}))$,
			\item[(ii)] for all $t \in [0,T]$, there holds $\mathbb{P}$-a.s.
			\begin{equation}\label{eq def weak formulation NS}
				\begin{aligned}
					&\left(v(t), \varphi\right) + \nu\int_{0}^{t}\left(\nabla v(s), \nabla\varphi\right)ds + \int_{0}^{t}\big\langle [v(s)\cdot\nabla]v(s), \varphi \big\rangle ds = \left(v_{0}, \varphi\right) \\&+ \int_{0}^{t}\big\langle f(s, v(s)), \varphi \big\rangle ds + \left(\int_{0}^{t}g(s, v(s))dW(s), \varphi\right), \ \ \forall \varphi \in D(A).
				\end{aligned}
			\end{equation}
		\end{enumerate}
	\end{defi}
	
	\subsection{Discretizations and algorithm}
	\paragraph{\underline{Time Discretization}}
	Let $M \in \mathbb{N}^{*}$ be given, and set $I_{k} = \{t_{\ell}\}_{\ell = 0}^{M}$ an equidistant partition of the interval $[0,T]$, where $t_{0} \coloneqq 0$, $t_{M} \coloneqq T$ and $\displaystyle k \coloneqq T/M$ is the time-step size. The equidistance condition is not mandatory in the sequel, but it is imposed for simplicity. One can generalize the presented method by associating a time-step $k_{m}$ with each sub-interval $[t_{m-1}, t_{m}]$, for all $m \in \{1, \dotsc, M\}$.
	
	\paragraph{\underline{Space discretization}}
	For simplicity's sake, we let $\mathcal{T}_{h}$ be a quasi-uniform triangulation of the domain $D \subset \mathbb{R}^{d}$, $d \in \{2, 3\}$ into simplexes of maximal diameter $h > 0$, and $\displaystyle \overline{D} = \bigcup\limits_{K \in \mathcal{T}_{h}}\overline{K}$. The space of polynomial vector fields on an arbitrary set $O$ with degree less than or equal to $n \in \mathbb{N}$ is denoted by $\mathbb{P}_{n}(O) \coloneqq \left(P_{n}(O)\right)^{d}$. For $n_{1}, n_{2} \in \mathbb{N}\backslash\{0\}$, we let
	\begin{align*}
		&\mathbb{H}_{h} \coloneqq \Big\{z_{h} \in \mathbb{H}^{1}_{0} \cap [C^{0}(\overline{D})]^{d} \ \Big| \ z_{h}|_{_{K}} \in \mathbb{P}_{n_{1}}(K), \ \ \forall K \in \mathcal{T}_{h}  \Big\},\\& 
		L_{h} \coloneqq \Big\{q_{h} \in L^{2}_{0}(D) \ \Big| \ q_{h}|_{_{K}} \in P_{n_{2}}(K), \ \ \forall K \in \mathcal{T}_{h}\Big\},
		\\& \mathbb{V}_{h} \coloneqq \Big\{ z_{h} \in \mathbb{H}_{h} \ \Big| \ \left(div \ z_{h}, q_{h}\right) = 0, \ \ \ \forall \ q_{h} \in L_{h}\Big\},
	\end{align*}
	be the finite element function spaces. For fixed $n_{1}, n_{2} \in \mathbb{N}\backslash \{0\}$, we assume that $\left(\mathbb{H}_{h}, L_{h}\right)$ satisfies the discrete $\inf$-$\sup$ condition; namely there is a constant $\beta > 0$ independent of the mesh size $h$ such that
	\begin{equation}\label{eq LBB condition}
		\sup\limits_{z_{h} \in \mathbb{H}_{h}\backslash \{0\}}\frac{\left(div \ z_{h}, q_{h}\right)}{\left|\left|\nabla z_{h}\right|\right|_{\mathbb{L}^{2}}} \geq \beta \left|\left|q_{h}\right|\right|_{L^{2}}, \ \ \ \forall \  q_{h} \in L_{h}.
	\end{equation}
	Given $z \in \mathbb{L}^{2}$, we denote by $\Pi_{h} \colon \mathbb{L}^{2} \to \mathbb{V}_{h}$ the $\mathbb{L}^{2}$-orthogonal projections, defined as the unique solution of the identity
	\begin{equation}\label{def projection}
		\left(z - \Pi_{h}z, \varphi_{h}\right) = 0, \ \ \forall \varphi_{h} \in \mathbb{V}_{h}.
	\end{equation}
	For $z \in \mathbb{H}_{0}^{1}$, $\Delta^{h}\colon \mathbb{H}_{0}^{1} \to \mathbb{V}_{h}$ denotes the discrete Laplace operator, defined as the unique solution of 
	\begin{equation}\label{def discrete Laplace}
		\left(\Delta^{h}z, \varphi_{h}\right) = - \left(\nabla z, \nabla \varphi_{h}\right), \ \ \ \forall \varphi_{h} \in \mathbb{V}_{h}.
	\end{equation}
	Estimate~\eqref{eqprojection} and the inverse inequality~\eqref{eq inverse estimate} below need to be satisfied by the recently defined approximate function spaces. Let $\mathbb{S}_{h}$ be a finite dimensional subspace of $\mathbb{H}^{1}_{0}$ equipped with an $\mathbb{L}^{2}$-projector $\Pi_{\mathbb{S}_{h}} \colon \mathbb{L}^{2} \to \mathbb{S}_{h}$, satisfying the following property:
	
	For $\displaystyle z \in \mathbb{H}_{0}^{1} \cap \mathbb{W}^{s,2}$, there is a positive constant $C$ independent of $h$ such that
	\begin{equation}\label{eqprojection}
		\sum_{j=0}^{1}h^{j}\left|\left|D^{j}\left(z - \Pi_{\mathbb{S}_{h}}z\right)\right|\right|_{\mathbb{L}^{2}} \leq Ch^{s}\left|\left|z\right|\right|_{\mathbb{W}^{s,2}}, \ \ 2 \leq s \leq n + 1, 
	\end{equation}
	where $n$ is the polynomials' degree in $\mathbb{S}_{h}$.\\
	Furthermore, assume that $\mathbb{S}_{h}$ fulfills the following inverse inequality:
	
	For $\ell \in \mathbb{N}$, $1 \leq p,q \leq +\infty$ and $0 \leq m \leq \ell$, there exists a constant $C$ independent of $h$ such that 
	\begin{equation}\label{eq inverse estimate}
		\left|\left|z_{h}\right|\right|_{\mathbb{W}^{\ell, p}} \leq Ch^{m-\ell + d\min(\frac{1}{p} - \frac{1}{q},0)}\left|\left|z_{h}\right|\right|_{\mathbb{W}^{m,q}}, \ \ \ \forall z_{h} \in \mathbb{S}_{h}.
	\end{equation}
	Provided the triangulation of the domain $D$ is quasi-uniform, one can easily check that the space $\mathbb{H}_{h}$ satisfies both estimates~\eqref{eqprojection} and \eqref{eq inverse estimate}. The reader may refer to \cite{brenner2007mathematical} for adequate proofs. Subsequently, we take $\mathbb{S}_{h} = \mathbb{H}_{h}$.
	
	The discrete differential filter is somewhat defined as its continuous counterpart, but this time by involving the weak formulation of problem~\eqref{eq Stokes}.
	\begin{defi}[Discrete differential filter]\label{definition discrete diff filter}\ \\
		Let $v$ be the vector field of Definition~\ref{def cont diff filter}. Its discrete differential filter, denoted by $\bar{u}_{h} \in \mathbb{V}_{h}$, is given by $$\alpha^{2}\left(\nabla \bar{u}_{h}, \nabla \varphi_{h}\right) + \left(\bar{u}_{h}, \varphi_{h}\right) = \left(v, \varphi_{h}\right), \ \ \ \forall \varphi_{h} \in \mathbb{V}_{h}.$$
	\end{defi}
	Additional information are stated in article~\cite[Section 4]{manica2006convergence}  . We list some of its properties in the following lemma.
	
	\begin{lem}\label{lemma discrete diff filter Hh}\ \\
		Let $v = v_{h} \in \mathbb{V}^{h}$ and $\bar{u}_{h} \in \mathbb{V}_{h}$ be its discrete differential filter. Then,
		\begin{enumerate}
			\item[(i)] $\displaystyle v_{h} = \bar{u}_{h} - \alpha^{2}\Delta^{h}\bar{u}_{h}$ and $\nabla v_{h} = \nabla \bar{u}_{h} - \alpha^{2}\nabla \Delta^{h}\bar{u}_{h}$ a.e. in $D$.
			\item[(ii)] $\displaystyle \left(\nabla v_{h}, \nabla \bar{u}_{h}\right) = \left|\left|\nabla \bar{u}_{h}\right|\right|^{2}_{\mathbb{L}^{2}} + \alpha^{2}\left|\left|\Delta^{h}\bar{u}_{h}\right|\right|^{2}_{\mathbb{L}^{2}}$.
		\end{enumerate}
	\end{lem}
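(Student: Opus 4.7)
The plan is to combine the two defining weak formulations—the one for the discrete differential filter (Definition~\ref{definition discrete diff filter}) and the one for the discrete Laplace operator (equation~\eqref{def discrete Laplace})—and exploit the fact that $\bar{u}_h$, $\Delta^h \bar{u}_h$, and $v_h$ all live in $\mathbb{V}_h$, so equality can be upgraded from variational to pointwise by a standard test-function argument.

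For part $(i)$, I would start from the identity
\[
\alpha^{2}\left(\nabla \bar{u}_{h}, \nabla \varphi_{h}\right) + \left(\bar{u}_{h}, \varphi_{h}\right) = \left(v_{h}, \varphi_{h}\right), \ \ \ \forall \varphi_{h} \in \mathbb{V}_{h},
\]
and rewrite the first term using \eqref{def discrete Laplace} applied to $z = \bar{u}_h$, giving $(\nabla \bar{u}_h, \nabla \varphi_h) = -(\Delta^h \bar{u}_h, \varphi_h)$ for every $\varphi_h \in \mathbb{V}_h$. Substituting yields
\[
\bigl(\bar{u}_{h} - \alpha^{2}\Delta^{h}\bar{u}_{h} - v_{h}, \varphi_{h}\bigr) = 0, \ \ \ \forall \varphi_{h} \in \mathbb{V}_{h}.
\]
Since $\bar{u}_{h} - \alpha^{2}\Delta^{h}\bar{u}_{h} - v_{h} \in \mathbb{V}_{h}$ itself, choosing it as the test function $\varphi_h$ forces $v_{h} = \bar{u}_{h} - \alpha^{2}\Delta^{h}\bar{u}_{h}$ pointwise a.e. in $D$. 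The gradient identity then follows by applying $\nabla$ to both sides of this equality of $\mathbb{V}_h$-functions.

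For part $(ii)$, I would take the inner product of the gradient identity from $(i)$ with $\nabla \bar{u}_h$:
\[
\left(\nabla v_{h}, \nabla \bar{u}_{h}\right) = \left\|\nabla \bar{u}_{h}\right\|^{2}_{\mathbb{L}^{2}} - \alpha^{2}\bigl(\nabla \Delta^{h}\bar{u}_{h}, \nabla \bar{u}_{h}\bigr),
\]
and then evaluate the last term by invoking \eqref{def discrete Laplace} once more with $z = \bar{u}_h$ and $\varphi_h = \Delta^h \bar{u}_h \in \mathbb{V}_h$, which gives $(\nabla \bar{u}_h, \nabla \Delta^h \bar{u}_h) = -\|\Delta^h \bar{u}_h\|^2_{\mathbb{L}^2}$. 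Putting these together produces the claimed equality.

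There is no genuine obstacle here; the only point that needs care is verifying that every quantity on which we test the variational identities actually belongs to $\mathbb{V}_h$ (so that the test-function manipulations are legitimate). This holds because the discrete Laplacian is defined with range in $\mathbb{V}_h$ and because $v_h$ is assumed in $\mathbb{V}_h$ by hypothesis, so the argument goes through without subtlety.
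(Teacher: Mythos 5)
Your proof is correct. Note that the paper does not actually prove this lemma: its ``proof'' consists of citing Lemma~2.1 of Connors (2010), so your argument supplies the verification that the citation outsources, and it is exactly the standard one. Both identities follow, as you show, by combining the weak formulation defining $\bar{u}_h$ with the defining identity \eqref{def discrete Laplace} of $\Delta^{h}$, and the upgrade from a variational identity to an a.e.\ equality is legitimate precisely because the residual $\bar{u}_{h} - \alpha^{2}\Delta^{h}\bar{u}_{h} - v_{h}$ lies in $\mathbb{V}_{h}$ and can be used as its own test function; likewise $\Delta^{h}\bar{u}_{h} \in \mathbb{V}_{h}$ makes your choice of test function in part $(ii)$ admissible. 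The only point worth stating explicitly is the passage to the gradient identity in $(i)$: since both sides of $v_{h} = \bar{u}_{h} - \alpha^{2}\Delta^{h}\bar{u}_{h}$ are $\mathbb{H}^{1}_{0}$ (indeed continuous piecewise polynomial) functions, equality a.e.\ in $D$ implies equality of their weak gradients a.e., which is what the lemma asserts. With that remark, your argument is complete and self-contained, which is arguably an improvement over the paper's bare reference.
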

	\begin{proof}
		Assertions $(i)$ and $(ii)$ are covered by \cite[Lemma 2.1]{Connors2010}.
	\end{proof}
	
	Before exhibiting the algorithm, we will define new notations for the approximate functions. The subscript $h$ of the utilized test functions will be dropped throughout the rest of this paper for the sake of clarity. For $t \in [0,T]$, we set $V(t) \coloneqq v_{h}(t)$ for $v_{h} \in \mathbb{V}_{h}$, and denote by $U(t)$ its discrete differential filter, i.e. $U(t) \coloneqq \bar{u}_{h}(t)$. Besides, let $\Pi(t) \coloneqq p_{h}(t)$ and $\tilde{\Pi}(t) \coloneqq \tilde{p}_{h}(t)$ be the (space) approximate pressures. We point out that Algorithm~\ref{Algorithm} is derived from equation~\eqref{eq definiton solution modified}, which contains both variables $\bar{u}$ and $v$.
	\begin{algo}\label{Algorithm}\ \\
		Given a starting point $U^{0} \in \mathbb{H}_{h}$, find for every $m \in \{1, \dotsc, M\}$, a $4$-tuple stochastic process $\left(U^{m}, V^{m}, \Pi^{m}, \tilde{\Pi}^{m}\right)~\in~\mathbb{H}_{h}\times\mathbb{H}_{h} \times L_{h}\times L_{h}$ such that for all $(\varphi, \psi, \Lambda_{1}, \Lambda_{2}) \in \mathbb{H}_{h}\times \mathbb{H}_{h} \times L_{h} \times L_{h}$, there holds $\mathbb{P}$-a.s.
		\begin{equation*}
			\begin{cases}
				\begin{split}
					\bullet &\Big(V^{m} - V^{m-1}, \varphi\Big) + k \nu\Big(\nabla V^{m}, \nabla \varphi\Big) + k\tilde{b}\Big(U^{m}, V^{m-1}, \varphi\Big) - k\Big(\Pi^{m}, div \ \varphi\Big) \\&= k \Big\langle f(t_{m-1}, U^{m-1}), \varphi \Big\rangle + \Big(g(t_{m-1}, U^{m-1})\Delta_{m}W, \varphi\Big),
				\end{split} \\
				\bullet \left(V^{m}, \psi\right) = \left(U^{m}, \psi\right) + \alpha^{2}\left(\nabla U^{m}, \nabla \psi\right) - \left(\tilde{\Pi}^{m}, div \ \psi\right),\\
				\bullet \left(div \ U^{m}, \Lambda_{1}\right) = \left(div \ V^{m}, \Lambda_{2}\right) = 0,
			\end{cases}
		\end{equation*}
		where $\Delta_{m}W = W(t_{m}) - W(t_{m-1})$ for all $m \in \{1, \dotsc, M\}$.
	\end{algo}
	For each $m \in \{0, \dotsc, M\}$, we may conclude from the second and third equations of Algorithm~\ref{Algorithm} along with Definition~\ref{definition discrete diff filter} two facts:
	\begin{enumerate}
		\item[(i)] $U^{m}$ is the discrete differential filter of $V^{m}$ and thereby, all the associated properties are valid.
		\item[(ii)] The Algorithm's starting point $U^{0}$ could be exchanged with $V^{0}$.
	\end{enumerate}
	
	We still need to state two mainly important lemmas that will contribute in the convergence of Algorithm~\ref{Algorithm}. Besides, given a function $u$, the shift operator $\tau_{\ell}$ is defined by $\tau_{\ell}u(t, x) \coloneqq u(t + \ell, x)$, for all $(t, x) \in [0,T-\ell] \times D$.
	
	The following lemma is provided in~\cite[Lemma 6]{CHEN20122252}, and will be employed when $\alpha$ is assumed to be controlled by $h$.
	\begin{lem}\label{lemma convergence}\ \\
		Let $Y$ be a Banach space and $M_{+}$ be a normed vector space in $Y$. Assume that the embedding $M_{+} \hookrightarrow Y$ is compact and that $U$ is a bounded subset of $L^{2}(0,T; M_{+})$. We suppose in addition that $\displaystyle||\tau_{\ell}u - u||_{L^{2}(0,T - \ell; Y)} \to 0$ as $\ell \to 0$, uniformly in $u \in U$. Then, $U$ is relatively compact in $L^{2}(0,T; Y)$.
	\end{lem}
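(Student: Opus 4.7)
This statement is a form of the Aubin--Lions--Simon compactness theorem: compact embedding of the range space plus uniform equi-continuity of translations forces relative compactness of a bounded family in the time-integrated space. My plan is to follow the classical route via time averaging and the Arzel\`a--Ascoli theorem, producing a finite $\delta$-net of $U$ in $L^2(0,T;Y)$ for every $\delta>0$.

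\textbf{Smoothing and its properties.} For $\varepsilon\in(0,T/2)$ and $u\in U$, I would work with the time-average
\[
u_\varepsilon(t) \coloneqq \frac{1}{\varepsilon}\int_0^\varepsilon u(t+s)\,ds, \qquad t\in[0,T-\varepsilon].
\]
Jensen's inequality followed by Fubini yields
\[
\|u_\varepsilon - u\|_{L^2(0,T-\varepsilon;Y)}^2 \;\le\; \sup_{0<s\le\varepsilon}\|\tau_s u - u\|_{L^2(0,T-s;Y)}^2,
\]
which tends to $0$ as $\varepsilon\to 0$ uniformly in $u\in U$ by the translation hypothesis. Cauchy--Schwarz on the $s$-integral gives $\|u_\varepsilon(t)\|_{M_+}\le\varepsilon^{-1/2}\|u\|_{L^2(0,T;M_+)}$, so the pointwise range $\{u_\varepsilon(t):u\in U\}$ is bounded in $M_+$ and therefore relatively compact in $Y$ via the compact embedding. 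Similarly,
\[
\|u_\varepsilon(t+\eta) - u_\varepsilon(t)\|_Y \;\le\; \varepsilon^{-1/2}\,\|\tau_\eta u - u\|_{L^2(0,T-\eta;Y)}
\]
proves pointwise equi-continuity in $t$, uniform in $u$. Arzel\`a--Ascoli then yields relative compactness of $\{u_\varepsilon:u\in U\}$ in $C([0,T-\varepsilon];Y)$, hence also in $L^2(0,T-\varepsilon;Y)$.

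\textbf{Assembling a $\delta$-net.} Given $\delta>0$, the first estimate lets me fix $\varepsilon$ so small that $\|u_\varepsilon - u\|_{L^2(0,T-\varepsilon;Y)}<\delta$ for every $u\in U$, and the Arzel\`a--Ascoli step then provides a finite $\delta$-net of $\{u_\varepsilon\}$ in $L^2(0,T-\varepsilon;Y)$. To upgrade to the full interval $[0,T]$, I would rerun the same construction on the reflected extension $\tilde u(t)=u(t)$ on $[0,T]$ and $\tilde u(t)=u(2T-t)$ on $[T,2T]$. A short case analysis shows that this reflection preserves both the bound in $L^2(0,2T;M_+)$ and the uniform translation continuity, so the averaging procedure applied on $[0,2T]$ delivers by restriction a finite $O(\delta)$-net of $U$ in $L^2(0,T;Y)$. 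Since $\delta$ is arbitrary, $U$ is totally bounded, hence relatively compact.

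\textbf{Main obstacle.} The only delicate point is controlling the boundary interval $[T-\varepsilon,T]$, on which neither the translation hypothesis nor the forward averaging operator is naturally defined. The symmetric reflection across $t=T$ (equivalently, a two-sided average) dissolves this artefact and makes every estimate global. Beyond this, the proof is routine Jensen--Fubini book-keeping combined with Arzel\`a--Ascoli.
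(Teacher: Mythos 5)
Your overall architecture --- forward time-averages $u_{\varepsilon}$, the Jensen--Fubini estimate $\|u_{\varepsilon}-u\|_{L^{2}(0,T-\varepsilon;Y)}\le \sup_{0<s\le\varepsilon}\|\tau_{s}u-u\|_{L^{2}(0,T-s;Y)}$, pointwise compactness of the averages through the compact embedding, equicontinuity, Arzel\`a--Ascoli, and a $\delta$-net --- is the classical Simon-type proof, and the interior steps are correct (the paper itself gives no proof; it only cites Lemma 6 of the reference \cite{CHEN20122252}, so yours is a self-contained argument in that same spirit). The gap sits exactly at the point you yourself flag as ``the only delicate point'' and then assert away: the claim that the even reflection $\tilde u$ across $t=T$ preserves the uniform translation continuity is not a short case analysis --- it \emph{is} the endpoint difficulty. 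Splitting $\|\tau_{\ell}\tilde u-\tilde u\|^{2}_{L^{2}(0,2T-\ell;Y)}$ into the contributions of $(0,T-\ell)$, $(T-\ell,T)$ and $(T,2T-\ell)$, the first and third are each bounded by $\|\tau_{\ell}u-u\|^{2}_{L^{2}(0,T-\ell;Y)}$, but the straddling term equals $\int_{T-\ell}^{T}\|u(2T-\ell-t)-u(t)\|_{Y}^{2}\,dt$, a difference of $u$ at two points of $(T-\ell,T)$ whose mutual distance varies with $t$; it is not an evaluation of $\tau_{h}u-u$ for any fixed $h$, and the only immediate bound is $4\int_{T-\ell}^{T}\|u(t)\|_{Y}^{2}\,dt$. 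Uniform smallness of this tail over $u\in U$ is not a hypothesis and does not follow from boundedness in $L^{2}(0,T;M_{+})$, so as written the reflected family is not known to satisfy the translation hypothesis on $(0,2T)$, and your final $O(\delta)$-net on all of $[0,T]$ is not obtained. (A secondary, minor point: $M_{+}$ is only assumed to be a normed space, so the $Y$-valued Bochner integral $u_{\varepsilon}(t)$ need not lie in $M_{+}$; one should instead say that it lies in the $Y$-closure of the $M_{+}$-ball of radius $\varepsilon^{-1/2}\sup_{u\in U}\|u\|_{L^{2}(0,T;M_{+})}$, which is compact in $Y$ --- and that is all Arzel\`a--Ascoli requires.)

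The gap is fixable, but it needs an actual estimate rather than case-splitting. For instance, compare both $u(t)$ and its mirror value $u(2T-\ell-t)$, $t\in(T-\ell,T)$, with the mean $m_{u,\ell}:=\ell^{-1}\int_{T-2\ell}^{T-\ell}u(s)\,ds$ over the adjacent interval: Jensen and Fubini give $\int_{T-\ell}^{T}\|u(t)-m_{u,\ell}\|_{Y}^{2}\,dt\le \ell^{-1}\int_{0}^{2\ell}\|\tau_{h}u-u\|^{2}_{L^{2}(0,T-h;Y)}\,dh$, and the mirrored integrand produces the same quantity after the change of variables $t\mapsto 2T-\ell-t$, so the straddling term is at most $8\sup_{0<h\le 2\ell}\sup_{u\in U}\|\tau_{h}u-u\|^{2}_{L^{2}(0,T-h;Y)}\to 0$ as $\ell\to 0$. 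With this (which is, in substance, how the endpoint is treated in Simon's and Chen--Liu's proofs), your reflection step becomes legitimate and the rest of your argument goes through.
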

	
	Conversely, when $\alpha$ is considered independently of $h$ and $k$, the below lemma will play an alternative role, and it consists of a discrete version of Lemma~\ref{lemma convergence}.
	\begin{lem}\label{lemma convergence discrete}\ \\
		Let $n \coloneqq (k,h) \in (\mathbb{R}_{+}^{*})^{2}$, $(B, ||\cdot||_{B})$ be a Banach space and $(M_{h}, ||\cdot||_{M_{h}})$ be a normed space in $B$. Let $(u_{n})_{n}$ be a sequence in $L^{2}(0,T; B)$. Assume that 
		\begin{enumerate}
			\item[(i)] if $(\varphi_{h})_{h}$ is a sequence of $B$ such that $||\varphi_{h}||_{M_{h}} \leq C$ for all $h > 0$, for some $C>0$ then, $(\varphi_{h})_{h}$ is relatively compact in $B$,
			\item[(ii)] $\left|\left|u_{n}\right|\right|_{L^{2}(0,T; M_{h})} \leq C_{1}$ and $\left|\left|u_{n}\right|\right|_{L^{1}_{loc}(0,T; B)} \leq C_{2}$ for all $n$, for some $C_{1}, C_{2}>0$,
			\item[(iii)] $\left|\left|\tau_{\ell}u_{n} - u_{n}\right|\right|_{L^{2}(0,T - \ell; B)} \to 0$ as $\ell \to 0$, uniformly in $n \in \mathbb{R}^{*}_{+}\times \mathbb{R}^{*}_{+}$.
		\end{enumerate}
		Then, $(u_{n})_{n}$ is relatively compact in $L^{2}(0,T; B)$.
	\end{lem}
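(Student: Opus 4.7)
The strategy is to adapt Simon's proof of the continuous compactness lemma to the present discrete setting, the key novelty being that the intermediate normed space $M_h$ depends on $h$. The approach combines three ingredients: regularization by a sliding time-average, Arzelà--Ascoli applied to the mollified sequence, and a diagonal extraction. For $\delta \in (0,T)$ I would first introduce
\[
u_n^{\delta}(t) \coloneqq \frac{1}{\delta}\int_{0}^{\delta} u_n(t+s)\,ds, \qquad t \in [0, T-\delta],
\]
well-defined as an element of $B$ for a.e.\ $t$ by the $L^{1}_{loc}(0,T;B)$ bound in (ii). A direct application of Jensen's inequality and Fubini's theorem yields
\[
\|u_n^{\delta} - u_n\|_{L^{2}(0,T-\delta;B)}^{2} \leq \frac{1}{\delta}\int_{0}^{\delta} \|\tau_s u_n - u_n\|_{L^{2}(0,T-s;B)}^{2}\,ds,
\]
which tends to $0$ uniformly in $n$ as $\delta \to 0$ by hypothesis (iii). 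Hence the mollified family approximates $(u_n)$ uniformly in $L^{2}(0,T-\delta;B)$.

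For each fixed $\delta > 0$, I would then establish relative compactness of $(u_n^{\delta})_n$ in $C([0,T-\delta];B)$ via Arzelà--Ascoli. Pointwise precompactness in $B$ is a consequence of
\[
\|u_n^{\delta}(t)\|_{M_h} \leq \frac{1}{\delta}\int_{0}^{\delta}\|u_n(t+s)\|_{M_h}\,ds \leq \frac{C_1}{\sqrt{\delta}},
\]
combined with assumption (i). Equicontinuity in $t$ follows from a Cauchy--Schwarz estimate performed inside the averaging integral,
\[
\|u_n^{\delta}(t+\ell) - u_n^{\delta}(t)\|_{B} \leq \frac{1}{\sqrt{\delta}}\,\|\tau_{\ell} u_n - u_n\|_{L^{2}(0,T-\ell;B)},
\]
which is uniformly small in $n$ and $t \in [0,T-\delta-\ell]$ by (iii). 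A diagonal extraction along a sequence $\delta_j \to 0$ then produces a subsequence of $(u_n)$ that converges in $L^{2}(0,T-\delta_j;B)$ for every $j$, i.e.\ in $L^{2}_{loc}(0,T;B)$.

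To upgrade this to convergence in the full space $L^{2}(0,T;B)$, I would observe that applying (i) to normalized elements gives a uniform-in-$h$ linear embedding $\|\cdot\|_B \lesssim \|\cdot\|_{M_h}$ (otherwise one would build a sequence bounded in $M_h$ yet unbounded in $B$, contradicting (i)), so (ii) in turn provides a uniform $L^{2}(0,T;B)$ bound on $(u_n)$; a short boundary-layer argument combining this bound with (iii) controls the residual tails near $t=0$ and $t=T$ uniformly in $n$. The main obstacle is precisely the $h$-dependence of $M_h$, which precludes a direct invocation of a fixed compact embedding $M_+ \hookrightarrow B$ as in Lemma~\ref{lemma convergence}; condition (i) is the uniform-in-$h$ substitute that makes the Simon-style argument go through. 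A secondary technical point is the averaging near $t=T$, which is handled either by extending $u_n$ symmetrically (e.g.\ by reflection) before mollification or by working on $[0,T-\delta]$ and managing the tail via (ii)--(iii).
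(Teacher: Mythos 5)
Your plan is correct, but it takes a genuinely different route from the paper. The paper does not redo a Simon-type argument at all: it introduces the nonlinear normalization operator $b_h(v)=\big(\|v\|_B/\|v\|_{M_h}\big)v$, uses hypothesis (i) to show each $b_h$ is a compact map of $B$, sets $v_n(t)=\big(\|u_n(t)\|_{M_h}/\|u_n(t)\|_B\big)u_n(t)$ so that $u_n=b_h(v_n)$ with $(v_n)_n$ bounded in $L^2(0,T;B)$ by (ii), and then simply invokes Theorem~1 of \cite{CHEN20122252} together with (iii); the analytic core (mollification, Arzel\`a--Ascoli, diagonal extraction, endpoint control) is thus outsourced to that cited theorem. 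You instead reprove that core directly: sliding time-averages, a $B$-valued Arzel\`a--Ascoli step in which (i) plays exactly the role of the fixed compact embedding of Lemma~\ref{lemma convergence}, a diagonal extraction, and a tail estimate near $t=T$. Both arguments are valid; the paper's reduction is shorter, while yours is self-contained, makes transparent where each hypothesis enters, and carries over verbatim to $L^p$. If you write it up, do execute the two steps you only assert: the boundary layer is handled by averaging the identity $u_n(t)=u_n(t-\sigma)+(\tau_\sigma u_n-u_n)(t-\sigma)$ over $\sigma\in(\delta,2\delta)$, which gives $\|u_n\|^2_{L^2(T-\delta,T;B)}\le 2\|u_n\|^2_{L^2(T-3\delta,T-\delta;B)}+2\sup_{\sigma\le 2\delta}\|\tau_\sigma u_n-u_n\|^2_{L^2(0,T-\sigma;B)}$, whose right-hand side is controlled along the already-extracted subsequence by (iii) and the $L^2_{loc}$ convergence; and note that the uniform bound $\|\cdot\|_B\lesssim\|\cdot\|_{M_h}$ you extract from (i) is not actually needed (your mollification estimate plus the Arzel\`a--Ascoli bound already give a uniform bound on $\|u_n\|_{L^2(0,T-\delta;B)}$, which with the tail estimate suffices), which is just as well, since deducing it from (i) is slightly delicate when a purported blow-up sits at one fixed $h$, because (i) literally speaks only of families indexed by varying $h$.
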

	\begin{proof}
		For $h > 0$, define $b_{h}(v) \coloneqq 
		\begin{cases}
			\frac{||v||_{B}}{||v||_{M_{h}}}v &\mbox{ if } v \in M_{h}\backslash \{0\}, \\
			0 &\mbox{ if } v \in (B \backslash M_{h})\cup \{0\}
		\end{cases}.$
		Let us show that $b_{h} \colon B \to B$ is a (nonlinear) compact operator. Indeed, assume that $(v_{h})_{h>0}$ is a bounded sequence of $B$ i.e. there is $M \geq 0$ such that $||v_{h}||_{B} \leq M$ for all $h > 0$.  We have $b_{h}(v_{h}) \in M_{h}$ and $||b_{h}(v_{h})||_{M_{h}} = ||v_{h}||_{B} \leq M$ for all $h > 0$. Therefore, by assumption~$(i)$, $(b_{h}(v_{h}))_{h}$ is relatively compact in $B$. Whence the compactness of $b_{h}$. For $n = (k,h) \in (\mathbb{R}_{+}^{*})^{2}$ and $t \in [0,T]$, define the sequence $v_{n}(t) \coloneqq \begin{cases}
			\frac{||u_{n}(t)||_{M_{h}}}{||u_{n}(t)||_{B}}u_{n}(t) &\mbox{ on } \{u_{n}(t) \neq 0\},\\
			0  &\mbox{ on } \{u_{n}(t) = 0\}
		\end{cases}$.
		We have $||v_{n}||_{L^{2}(0,T; B)} \leq ||u_{n}||_{L^{2}(0,T; M_{h})} \leq C_{1}$ for all $n$, thanks to assertion~$(ii)$. Thus, $(v_{n})_{n}$ is bounded in $L^{2}(0,T; B)$, particularly in $L^{1}(0,T; B)$. On the other hand, $b_{h}(v_{n}(t)) = \begin{cases}
			\frac{||v_{n}(t)||_{B}}{||v_{n}(t)||_{M_{h}}}v_{n}(t) &\mbox{ on } \{v_{n}(t) \neq 0\},\\
			0 &\mbox{ on } \{v_{n}(t) = 0\}
		\end{cases} = \begin{cases}
			u_{n}(t) &\mbox{ on } \{u_{n}(t) \neq 0\}, \\
			0 &\mbox{ on } \{u_{n}(t) = 0\}
		\end{cases} = u_{n}(t)$ for all $t \in [0,T]$. Thereby, $u_{n} = b_{h}(v_{n})$, for all $n \in (\mathbb{R}_{+}^{*})^{2}$. Using the above results together with assertions $(ii)$, $(iii)$ and applying Theorem 1 in~\cite{CHEN20122252} yield the relative compactness of $(u_{n})_{n}$ in $L^{2}(0,T; B)$.
	\end{proof}
	
	\section{Main results}\label{section3}
	
	In the light of the preceding preliminaries, we are now able to state the main results of this paper. Theorem~\ref{Thm LANS-alpha} concerns the stochastic LANS-$\alpha$ model and Theorem~\ref{Thm NS} is devoted to the stochastic Navier-Stokes equations.
	
	\begin{thm}\label{Thm LANS-alpha}\ \\
		Let $T > 0$, $D \subset \mathbb{R}^{d}, \ d \in \{2,3\}$ be a bounded convex polygonal or polyhedral domain and $\left(\Omega, \mathcal{F}, (F_{t})_{t \in [0,T]}, \mathbb{P}\right)$ be a filtered probability space. Assume that assumptions $(S_{1})$-$(S_{3})$ are fulfilled. For any finite positive pair $(k,h)$, let $\mathcal{T}_{h}$ be a quasi-uniform triangulation of $D$, $I_{k}$ be an equidistant partition of the time interval $[0,T]$, $(\mathbb{H}_{h}, L_{h})$ be a pair of finite element spaces satisfying the LBB-condition \eqref{eq LBB condition}, and $U^{0}$ be in $\mathbb{H}_{h}$ such that $\left|\left|U^{0}\right|\right|_{\mathbb{H}^{1}}$ is uniformly bounded in $h > 0$. If $\frac{\sqrt{k}}{h} < L \leq \alpha$ for some $L \in (0,1)$ independent of $k$ and $h$ then, there exists a solution $\{(U^{m}, V^{m}, \Pi^{m}, \tilde{\Pi}^{m})\}_{m=1}^{M}$ of Algorithm \ref{Algorithm}, and it satisfies Lemma~\ref{lemma3.3.1}. Moreover, if $U^{0} \to \bar{u}_{0}$ in $L^{4}(\Omega; \mathbb{H}^{1})$ as $h \to 0$, Algorithm~\ref{Algorithm} converges toward the unique solution of equations~\eqref{main equation} in the sense of Definition~\ref{definition variational solution}.
	\end{thm}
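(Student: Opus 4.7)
I would follow the three-stage programme outlined in the introduction: (a) solvability of the iteration at each time step, (b) the uniform moment estimates summarised in Lemma~\ref{lemma3.3.1}, and (c) passage to the limit, identifying a subsequential limit with the variational solution of Definition~\ref{definition variational solution}. The hypothesis $\sqrt{k}/h<L\le\alpha$ will enter step~(b) as precisely the CFL-like condition needed to absorb the convective term against the $h^{-1}$ generated by spatial inverse inequalities, while the fact that $\alpha$ is bounded away from $0$ independently of $(k,h)$ allows us to apply the discrete compactness Lemma~\ref{lemma convergence discrete} in step~(c).

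For step~(a), once $(V^{m-1},U^{m-1})$ are frozen, Algorithm~\ref{Algorithm} is a \emph{linear} saddle-point system in $(V^m,U^m,\Pi^m,\tilde{\Pi}^m)$ on the finite-dimensional space $\mathbb{H}_h\times\mathbb{H}_h\times L_h\times L_h$. Eliminating $U^m$ through the discrete filter identity of Lemma~\ref{lemma discrete diff filter Hh} (equivalently: $V^m=U^m-\alpha^2\Delta^h U^m$) uncouples the two pressure blocks, and the LBB condition~\eqref{eq LBB condition} together with the coercivity of $(V,\varphi)\mapsto(V,\varphi)+k\nu(\nabla V,\nabla\varphi)$---the trilinear perturbation $\tilde{b}(U^m,V^{m-1},\cdot)$ being linear in $U^m$ for frozen $V^{m-1}$---delivers a unique solution by standard finite-dimensional saddle-point theory.

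For step~(b), I would test the first equation of Algorithm~\ref{Algorithm} with $\varphi=U^m\in\mathbb{H}_h$, so that the pressure term drops out thanks to the weak divergence-free constraint. The filter identity and Lemma~\ref{lemma discrete diff filter Hh}(ii) yield the key algebraic relations $(V^m,U^m)=\|U^m\|_\alpha^2$ and $(\nabla V^m,\nabla U^m)=\|\nabla U^m\|_{\mathbb{L}^{2}}^2+\alpha^2\|\Delta^h U^m\|_{\mathbb{L}^{2}}^2$, while~\eqref{eq identity (a-b,a)} telescopes the increment into a difference of $\|\cdot\|_\alpha^2$ norms plus a dissipative remainder. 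The critical estimate is that of $k\tilde{b}(U^m,V^{m-1},U^m)$: by Proposition~\ref{prop trilinear form}(iii) and the inverse inequality~\eqref{eq inverse estimate}, a prefactor of the form $kh^{-2}\alpha^{-2}$ appears, and the hypothesis $\sqrt{k}/h\le\alpha$ makes this $O(1)$, allowing absorption into the diffusion via Young's inequality. The sublinear bounds in $(S_2)$ combined with Itô's isometry handle the noise, and a discrete Gronwall argument followed by a BDG inequality on the martingale part yields $\mathbb{E}\bigl[\max_{m}\|U^m\|_\alpha^{2p}\bigr]\le C$ together with the natural dissipative bound on $k\sum_m\|\nabla V^m\|_{\mathbb{L}^{2}}^{2}$, for $p\in\{1,2\}$, uniformly in $(k,h,\alpha)$.

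For step~(c), introduce piecewise-constant and piecewise-affine time interpolants $U_{k,h}$, $V_{k,h}$ of $\{U^m\}$, $\{V^m\}$. The uniform bounds from (b) provide weak/weak-$*$ limits in $L^{2}(\Omega\times(0,T);D(A))\cap L^{2}(\Omega;L^{\infty}(0,T;\mathbb{V}))$. Since $\alpha$ is bounded below by $L$, strong pathwise convergence of $U_{k,h}$ in $L^{2}(0,T;\mathbb{L}^{2})$ on a ``good'' sample subset $\Omega_{k,h}^{\varepsilon}\subset\Omega$ of measure at least $1-\varepsilon$---constructed by Markov's inequality from the fourth-moment bounds---follows from Lemma~\ref{lemma convergence discrete}, once a translation-in-time estimate $\|\tau_\ell U_{k,h}-U_{k,h}\|_{L^{2}(0,T-\ell;\mathbb{L}^{2})}\to 0$ has been derived by summing the increments $V^m-V^{m-1}$ from the scheme and testing by duality. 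This bypasses the Skorokhod representation and keeps the iterates on the original probability space. The main obstacle---and the hardest part of the whole argument---is the limit identification of the nonlinear trilinear term, which requires simultaneously the strong $\mathbb{L}^{2}$-convergence of $U_{k,h}$, the corresponding convergence of $V_{k,h}$ via continuity of the discrete filter, and the weak convergence of $\nabla V_{k,h}^{-}$, while carefully tracking the one-step shift between $V^{m-1}$ and $U^m$. The stochastic integral is passed to the limit by Itô's isometry and the Lipschitz property of $g$ in $(S_2)$; the forcing $f$ is handled analogously. Sending $\varepsilon\to 0$ extends convergence to all of $\Omega$, and uniqueness of the variational LANS-$\alpha$ solution to~\eqref{eq definiton solution modified}, combined with $U^{0}\to\bar u_{0}$ in $L^{4}(\Omega;\mathbb{H}^{1})$, upgrades subsequential convergence to convergence of the entire family.
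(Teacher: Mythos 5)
Your overall architecture (solvability, uniform moments, compactness on a good sample set, limit identification) matches the paper, and your step (a) via linearity of the semi-implicit scheme is a legitimate alternative to the paper's Brouwer-plus-separate-uniqueness argument. But step (b) contains a genuine misconception about where the hypothesis $\sqrt{k}/h<L\leq\alpha$ enters. When you test the scheme with $\varphi=U^{m}$, the convective term $k\tilde{b}(U^{m},V^{m-1},U^{m})$ vanishes identically by Proposition~\ref{prop trilinear form}-$(ii)$ — this exact cancellation is the whole point of writing the nonlinearity as $\tilde{b}(U^{m},V^{m-1},\cdot)$ — so no CFL-type absorption is needed, and the estimates of Lemma~\ref{lemma3.3.1} hold with constants independent of $\alpha,k,h$ under no mesh condition at all. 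Your proposed alternative, bounding the trilinear term by Proposition~\ref{prop trilinear form}-$(iii)$ plus the inverse inequality and absorbing a $kh^{-2}\alpha^{-2}$ prefactor into the diffusion, would not work: the resulting term is cubic in the unknowns (it carries a factor $\|V^{m-1}\|_{\mathbb{L}^{2}}$ which is random and not a priori small), so Young's inequality cannot absorb it into $k\nu\|U^{m}\|_{h,\alpha}^{2}$ and a discrete Gronwall argument with such random coefficients does not deliver the stated uniform bounds. In the paper the condition $\sqrt{k}/h<L\leq\alpha$ is used only later, in the time-translation estimate (Remark~\ref{rmk k-h relation}, terms $I$ and $II$, with the modified sample subset) and in the limit identification of Proposition~\ref{prop limits identification}-$2$.

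There is a second gap in step (c). In the regime $\alpha\geq L$ there is no uniform bound on $k\sum_{m}\|\nabla V^{m}\|^{2}_{\mathbb{L}^{2}}$ — that bound (Lemmas~\ref{lemma V < U}, \ref{lemma V a priori estimates}) requires $\alpha\leq\mathcal{C}h$, i.e.\ the other theorem — so the ``weak convergence of $\nabla\mathcal{V}^{-}_{k,h}$'' you invoke for the trilinear limit is simply unavailable; only $k\sum_{m}\|V^{m}\|^{2}_{\mathbb{L}^{2}}\leq C$ holds (Lemma~\ref{lemma a priori estimate V alpha indep.}), giving weak $\mathbb{L}^{2}$ convergence of $\mathcal{V}^{\pm}_{k,h}$. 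Correspondingly, strong convergence of $\mathcal{U}^{+}_{k,h}$ merely in $L^{2}(0,T;\mathbb{L}^{2})$ is too weak to pass to the limit in $\tilde{b}(\mathcal{U}^{+}_{k,h},\mathcal{V}^{-}_{k,h},\varphi)$ against a only-weakly-convergent $\mathcal{V}^{-}_{k,h}$. The paper instead applies Lemma~\ref{lemma convergence discrete} with the $h$-dependent space $M^{\alpha}_{h}$ normed by $\|\cdot\|_{h,\alpha}$, shows that bounded sets there are relatively compact in $\mathbb{H}^{1}_{0}$ (using $\Delta^{h}\to\Delta$ and, crucially, $\alpha\geq L>0$), and thus obtains strong convergence of $\mathcal{U}^{+}_{k,h}$ in $L^{2}(\Omega;L^{2}(0,T;\mathbb{H}^{1}_{0}))$; the trilinear term is then identified after integrating by parts via Proposition~\ref{prop trilinear form}-$(iv)$ so that all derivatives fall on $\mathcal{U}^{+}_{k,h}$ and $\varphi$, with $v_{\alpha}=u_{\alpha}-\alpha^{2}\Delta u_{\alpha}$ recovered separately. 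Without upgrading your compactness target from $\mathbb{L}^{2}$ to $\mathbb{H}^{1}_{0}$ and dropping the reliance on $\nabla\mathcal{V}^{-}_{k,h}$, your limit passage for the nonlinearity does not close.
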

	
	\begin{thm}\label{Thm NS}\ \\
		Let $T > 0$, $D \subset \mathbb{R}^{2}$ be a bounded convex polygonal domain and $\left(\Omega, \mathcal{F}, (F_{t})_{t \in [0,T]}, \mathbb{P}\right)$ be a filtered probability space. Assume assumptions $(S_{1})$ and $(S_{2})$ and let $v_{0} \in L^{4}(\Omega, \mathcal{F}_{0}, \mathbb{P}; \mathbb{H})$ be the initial datum of equations~\eqref{Navier-Stokes}. For any finite positive pair $(k,h)$, let $\mathcal{T}_{h}$ be a quasi-uniform triangulation of $D$, $I_{k}$ be an equidistant partition of the time interval $[0,T]$, $(\mathbb{H}_{h}, L_{h})$ be a pair of finite element spaces satisfying the LBB-condition \eqref{eq LBB condition}, and $V^{0}$ be in $\mathbb{H}_{h}$ such that $\left|\left|V^{0}\right|\right|_{\mathbb{L}^{2}}$ is uniformly bounded in $h > 0$. If $\alpha \leq Ch$ for some $C > 0$ independent of $k$ and $h$ then, there exists a solution $\{(U^{m}, V^{m}, \Pi^{m}, \tilde{\Pi}^{m})\}_{m=1}^{M}$ of Algorithm \ref{Algorithm}, and it satisfies Lemmas~\ref{lemma3.3.1} and \ref{lemma V a priori estimates}. Further, if $V^{0} \to v_{0}$ in $L^{4}(\Omega; \mathbb{L}^{2})$ as $h \to 0$ then, Algorithm~\ref{Algorithm} converges toward the unique solution of equations~\eqref{Navier-Stokes} in the sense of Definition~\ref{def NS solution}.
	\end{thm}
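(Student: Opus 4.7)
The plan is to follow the template of Theorem~\ref{Thm LANS-alpha} but adapted to the degenerating scale regime $\alpha \leq Ch \to 0$, in which the LANS-$\alpha$ regularisation reduces to the standard NSEs in the limit. I would organise the argument into four steps.

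\textbf{Step 1 (Iterates: existence and a priori bounds).} At time step $m$, Algorithm~\ref{Algorithm} is a linear saddle-point system for $(U^m, V^m, \Pi^m, \tilde{\Pi}^m)$ given $(U^{m-1}, V^{m-1})$. Solvability follows from the LBB condition~\eqref{eq LBB condition} together with coercivity obtained by testing the first equation with $\varphi = U^m$: Proposition~\ref{prop trilinear form}(ii) kills the trilinear form, Lemma~\ref{lemma discrete diff filter Hh}(ii) turns the viscous term into $\nu k(||\nabla U^m||_{\mathbb{L}^2}^{2} + \alpha^{2}||\Delta^{h} U^m||_{\mathbb{L}^2}^{2})$, and the second line of Algorithm~\ref{Algorithm} combined with identity~\eqref{eq identity (a-b,a)} yields $(V^m - V^{m-1}, U^m) = \tfrac{1}{2}(||U^m||_{\alpha}^{2} - ||U^{m-1}||_{\alpha}^{2} + ||U^m - U^{m-1}||_{\alpha}^{2})$. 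The resulting energy identity, the growth/Lipschitz bounds of $(S_{2})$, the Burkholder--Davis--Gundy inequality, estimate~\eqref{eq 2.3.4} and a discrete Gronwall step produce the uniform-in-$(k,h,\alpha)$ moment bounds of Lemma~\ref{lemma3.3.1}. The sharper $\mathbb{L}^{2}$-bound on $V^{m}$ (Lemma~\ref{lemma V a priori estimates}) comes from testing with $\varphi = V^m$ and rewriting $\tilde{b}(U^m, V^{m-1}, V^m) = \tilde{b}(U^m, V^{m-1}, V^m - V^{m-1})$ via Proposition~\ref{prop trilinear form}(ii); the right-hand side is absorbed through Proposition~\ref{prop trilinear form}(iii), a 2D Ladyzhenskaya estimate and the inverse inequality~\eqref{eq inverse estimate}, which is exactly where the scaling $\alpha \leq Ch$ enters.

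\textbf{Step 2 (Compactness on a large sample subset).} Let $V_{k,h}$ and $U_{k,h}$ be the piecewise-constant (and piecewise-affine) time interpolants of $\{V^m\}$ and $\{U^m\}$. For each $\varepsilon>0$, Chebyshev's inequality applied to the estimates of Step~1 furnishes a measurable set $\Omega_{k,h}^{\varepsilon} \subset \Omega$ with $\mathbb{P}(\Omega_{k,h}^{\varepsilon}) \geq 1-\varepsilon$ on which $V_{k,h}$ is deterministically bounded in $L^{2}(0,T; \mathbb{V})$ and satisfies $||\tau_{\ell} V_{k,h} - V_{k,h}||_{L^{2}(0,T-\ell; \mathbb{H})} \to 0$ as $\ell \to 0$, uniformly in $(k,h)$. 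Since the embedding $\mathbb{V} \hookrightarrow \mathbb{H}$ is compact in dimension two, Lemma~\ref{lemma convergence} applied on $\Omega_{k,h}^{\varepsilon}$ extracts a subsequence strongly convergent in $L^{2}(0,T; \mathbb{H})$; a diagonal extraction over $\varepsilon \to 0$, combined with the uniform $L^{4}$-moments, promotes this to strong convergence of $V_{k,h}$ in $L^{2}(\Omega \times (0,T); \mathbb{H})$ towards some $v$, bypassing any use of Skorokhod. The discrete filter identity $V^m = U^m - \alpha^{2}\Delta^{h} U^m$, the $\alpha^{2}||\Delta^{h} U^m||_{\mathbb{L}^{2}}$-control from Step~1 and the hypothesis $\alpha \leq Ch \to 0$ imply $||U_{k,h} - V_{k,h}||_{\mathbb{L}^{2}} \to 0$, so $U_{k,h}$ shares the same limit $v$.

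\textbf{Step 3 (Identification of the limit and uniqueness).} For $\varphi \in D(A)$, test Algorithm~\ref{Algorithm} against $\Pi_{h}\varphi \in \mathbb{V}_{h}$. The linear terms pass to the limit by the strong/weak convergences above, and the pressure contributions drop out on the divergence-free $\varphi$. The convective part splits as $\tilde{b}(U^m, V^{m-1}, \Pi_{h}\varphi) = \langle[U^m \cdot \nabla]V^{m-1}, \Pi_{h}\varphi\rangle + \langle(\nabla U^m)^{T} V^{m-1}, \Pi_{h}\varphi\rangle$. The first piece converges to $\langle[v \cdot \nabla]v, \varphi\rangle$ thanks to the strong $\mathbb{L}^{2}$-convergence of $U^m$ and the weak convergence of $\nabla V^{m-1}$, after an integration by parts of the kind already used for~\eqref{eq definiton solution modified}. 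The second piece has limit $\int_{D}(\nabla v)^{T} v \cdot \varphi\,dx = \tfrac{1}{2}\int_{D}\nabla |v|^{2}\cdot \varphi\,dx = 0$ since $\varphi$ is divergence-free, so the LANS-$\alpha$ correction to the Navier--Stokes nonlinearity silently disappears in the limit. The stochastic integral passes to the limit by the Lipschitz bound on $g$ in $(S_{2})$ and Itô's isometry, which grant continuity with respect to the strong $L^{2}(\Omega \times (0,T); \mathbb{L}^{2})$-convergence of $U^{m-1}$. Thus $v$ satisfies~\eqref{eq def weak formulation NS} $\mathbb{P}$-a.s.\ and lies in the class of Definition~\ref{def NS solution}; pathwise uniqueness of 2D strong NS solutions identifies $v$ and forces convergence of the whole sequence.

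\textbf{Main obstacle.} The most delicate point is running strong convergence directly on $(\Omega, \mathcal{F}, \mathbb{P})$ without Skorokhod: the available compactness tools (Lemmas~\ref{lemma convergence} and \ref{lemma convergence discrete}) are purely deterministic, so the estimates of Step~1 must be converted into uniform control on sets $\Omega_{k,h}^{\varepsilon}$ with $\mathbb{P}(\Omega_{k,h}^{\varepsilon}) \to 1$ uniformly in $(k, h, \alpha)$. Intimately linked to this is the simultaneous handling of the degeneracy $\alpha \to 0$ (needed to merge the limits of $U^m$ and $V^m$) and of the extra trilinear piece $(\nabla U^m)^{T}V^{m-1}$, which only vanishes once one exploits $\mathrm{div}\,\varphi = 0$; these two mechanisms are precisely what the scaling hypothesis $\alpha \leq Ch$ is engineered to make compatible.
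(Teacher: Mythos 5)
Your overall architecture coincides with the paper's: uniform-in-$(\alpha,k,h)$ estimates, a sample subset $\Omega^{\varepsilon}_{k,h}$ of measure $\geq 1-c\varepsilon$ on which the deterministic compactness lemma (Lemma~\ref{lemma convergence}) is applied pathwise, upgrading to strong convergence in $L^{2}(\Omega;L^{2}(0,T;\mathbb{L}^{2}))$ via the $L^{4}$-moments and the measure bound (no Skorokhod), merging of the limits of $U$ and $V$ through the filter identity and $\alpha\leq Ch$, and identification of the nonlinearity with $[v\cdot\nabla]v$ using the vanishing of the $(\nabla\cdot)^{T}$-piece against divergence-free test functions. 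These are exactly the mechanisms of Lemmas~\ref{lemma translation property}, \ref{lemma V < U} and Propositions~\ref{prop limits of U and V}--\ref{prop limits identification}.

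However, there is one step in your Step~1 that fails as stated: your derivation of the $\mathbb{L}^{2}$-bounds on $V^{m}$ (Lemma~\ref{lemma V a priori estimates}) by testing with $\varphi=V^{m}$ and invoking Proposition~\ref{prop trilinear form}-$(ii)$ to rewrite $\tilde{b}(U^{m},V^{m-1},V^{m})$ as $\tilde{b}(U^{m},V^{m-1},V^{m}-V^{m-1})$. Property $(ii)$ only gives $\tilde{b}(z_{1},z_{2},z_{1})=0$, i.e.\ it requires the \emph{first and third} arguments to coincide; here the first argument is $U^{m}$ and the third is $V^{m-1}$, so $\tilde{b}(U^{m},V^{m-1},V^{m-1})$ does not vanish and the announced cancellation is not available. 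Without it, testing with $V^{m}$ leaves a genuinely quadratic-in-gradients convective term (of the type $k\|\nabla U^{m}\|_{\mathbb{L}^{2}}\|\nabla V^{m-1}\|_{\mathbb{L}^{2}}\|\nabla V^{m}\|_{\mathbb{L}^{2}}$) that the available dissipation and Gronwall structure cannot absorb. The paper obtains Lemma~\ref{lemma V a priori estimates} without any re-testing of the scheme: since $V^{m}=U^{m}-\alpha^{2}\Delta^{h}U^{m}$, the inverse inequality~\eqref{eq inverse estimate} together with $\alpha\leq\mathcal{C}h$ gives $\|V^{m}\|_{\mathbb{L}^{2}}\leq\mathcal{C}_{1}\|U^{m}\|_{\alpha}$ and $\|\nabla V^{m}\|^{2}_{\mathbb{L}^{2}}\leq\mathcal{C}_{1}(\|\nabla U^{m}\|^{2}_{\mathbb{L}^{2}}+\alpha^{2}\|\Delta^{h}U^{m}\|^{2}_{\mathbb{L}^{2}})$ (Lemma~\ref{lemma V < U}), so the $V$-estimates are inherited directly from Lemma~\ref{lemma3.3.1}; this is also where the scaling $\alpha\leq Ch$ actually enters the a priori analysis, not through a Ladyzhenskaya absorption. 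You should replace your argument by this transfer. Two smaller omissions worth repairing: the limit $v$ must be shown to be (weakly) divergence-free — the iterates are only discretely divergence-free, so this requires the interpolation argument of Proposition~\ref{prop divergence-free} — and the time-regularity $v\in L^{2}(\Omega;C([0,T];\mathbb{H}))$ claimed by Definition~\ref{def NS solution} needs the standard a posteriori argument invoked in the paper's conclusion.
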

	
	As stated in the hypothesis of both Theorems~\ref{Thm LANS-alpha} and \ref{Thm NS}, one needs to bound the initial datum $U^{0}$ (or $V^{0}$) of Algorithm~\ref{Algorithm} independently of $h > 0$. To do so, we evoke the Ritz operator $\mathcal{R}_{h}$ which is stable in $\mathbb{H}^{1}$ i.e. there is a positive non-decreasing function $\zeta$, uniform in $h$ such that $||\mathcal{R}_{h}v||_{\mathbb{H}^{1}} \leq \zeta||v||_{\mathbb{H}^{1}}$ for all $v \in \mathbb{H}^{1}$. Given $v \in \mathbb{H}^{1}$, the Ritz operator $\mathcal{R}_{h} \colon \mathbb{H}^{1} \longrightarrow \mathbb{V}_{h}$ is defined as the unique solution of
	\begin{align*}
		\ \left(\nabla \mathcal{R}_{h}v, \nabla v_{h}\right) = \left(\nabla v, \nabla v_{h}\right), \ \  \forall v_{h} \in \mathbb{V}_{h}.
	\end{align*}
	Therefore, we define $U^{0}$ by $U^{0} = \mathcal{R}_{h}\bar{u}_{0}$ where $\bar{u}_{0}$ is the initial datum of equations~\eqref{main equation}. The same operator is suitable if $V^{0}$ was chosen to be the starting point of Algorithm~\ref{Algorithm}. In this case, we set $V^{0} = \mathcal{R}_{h}v_{0}$, where $v_{0}$ is given by problem~\eqref{eq Stokes} when $\bar{u} = \bar{u}_{0}$. For further properties, the reader may refer to \cite[Lemma 4.2]{guermond2008stability}.
	
	\section{Solvability, stability and a priori estimates}\label{section4}
	
	Notice that the system of equations proposed in Algorithm \ref{Algorithm} can be reformulated after taking the test functions $\varphi$ and $\psi$ in $\mathbb{V}_{h}$:
	\begin{equation}\label{eq3.3.1}
		\begin{cases}
			\begin{split}
				\bullet &\big(V^{m} - V^{m-1}, \varphi\big) + k\nu \big(\nabla V^{m}, \nabla \varphi\big) + k\tilde{b}\big(U^{m}, V^{m-1}, \varphi\big) \\&\hspace{20pt}= k \big\langle f(t_{m-1}, U^{m-1}), \varphi \big\rangle + \big(g\small(t_{m-1}, U^{m-1}\small)\Delta_{m}W, \varphi\big), \ \ \forall \varphi \in \mathbb{V}_{h}.
			\end{split}\\
			\bullet \left(V^{m}, \psi\right) = \left(U^{m}, \psi\right) + \alpha^{2}\left(\nabla U^{m}, \nabla \psi\right), \ \ \forall \psi \in \mathbb{V}_{h}.
		\end{cases}
	\end{equation}
	
	In the below lemma, we illustrate the solvability of Algorithm~\ref{Algorithm}, the iterates' measurability, and some a priori estimates whose role is to afford the proposed numerical scheme with stability.
	\begin{lem}\label{lemma3.3.1}\ \\
		Assume that assumptions $(S_{1})$-$(S_{3})$ are valid. Then, there exists a $\mathbb{V}_{h}\times\mathbb{V}_{h}\times L_{h}\times L_{h}$-valued sequence of random variables $\{(U^{m}, V^{m}, \Pi^{m}, \tilde{\Pi}^{m})\}_{m=1}^{M}$ that solves $\mathbb{P}$-a.s. Algorithm~\ref{Algorithm}, and fulfills the following assertions:
		\begin{enumerate}
			\item[(i)] for any $m \in \{1, \dotsc, M\}$, the maps $U^{m}, V^{m} \colon \Omega \to \mathbb{H}_{h}$ are $\mathcal{F}_{t_{m}}$-measurable.
			\item [(ii)]
			$\displaystyle \mathbb{E}\bigg[\max\limits_{1 \leq m \leq M}\left|\left|U^{m}\right|\right|_{\alpha}^{2} + \frac{k\nu}{2} \sum_{m=1}^{M}\left(\left|\left|\nabla U^{m}\right|\right|_{\mathbb{L}^{2}}^{2} + \alpha^{2}\left|\left|\Delta^{h}U^{m}\right|\right|^{2}_{\mathbb{L}^{2}}\right) + \frac{1}{4}\sum_{m=1}^{M}\left|\left|U^{m} - U^{m-1}\right|\right|_{\alpha}^{2}\bigg] \leq C_{T},$
			\item[(iii)] $\displaystyle \mathbb{E}\bigg[\max\limits_{1 \leq m \leq M}\left|\left|U^{m}\right|\right|_{\alpha}^{4} + \frac{1}{4}\sum_{m=1}^{M}\left|\left|U^{m}\right|\right|_{\alpha}^{2}\left|\left|U^{m} - U^{m-1}\right|\right|_{\alpha}^{2} \\ \mbox{\hspace{100pt} }+ \frac{k\nu}{4}\sum_{m=1}^{M}\left|\left|U^{m}\right|\right|_{\alpha}^{2}\left(\left|\left|\nabla U^{m}\right|\right|_{\mathbb{L}^{2}}^{2} + \alpha^{2}\left|\left|\Delta^{h}U^{m}\right|\right|^{2}_{\mathbb{L}^{2}}\right)\bigg] \leq C_{T,2},$
		\end{enumerate}
		where $C_{T,q} = C_{T,q}\left(||U^{0}||_{L^{2q}(\Omega; \mathbb{H}^{1})}, T, (K_{i})_{i=1}^{4}, Tr(Q), \nu, D\right)$, $q \in \{1,2\}$ is a positive constant, independent of $\alpha$, $k$ and $h$. Note that $C_{T} \coloneqq C_{T,1}$.
	\end{lem}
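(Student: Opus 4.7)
I proceed by induction in $m$ with $(U^{m-1},V^{m-1})$ assumed $\mathcal{F}_{t_{m-1}}$-measurable. Testing Algorithm~\ref{Algorithm} with $(\varphi,\psi)\in\mathbb{V}_h\times\mathbb{V}_h$ eliminates the pressures and reduces the step-$m$ problem to the square linear system~\eqref{eq3.3.1}, in which $V^m$ may be further eliminated through Definition~\ref{definition discrete diff filter}. Taking $\varphi=U^m$ and using $\tilde b(U^m,V^{m-1},U^m)=0$ from Proposition~\ref{prop trilinear form}(ii) shows that the associated linear operator on $\mathbb{V}_h$ is coercive, so the system admits a unique solution $(U^m,V^m)\in\mathbb{V}_h\times\mathbb{V}_h$. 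The pressures $\Pi^m,\tilde\Pi^m\in L_h$ are then recovered, uniquely, by the LBB condition~\eqref{eq LBB condition} applied to test functions in $\mathbb{H}_h$. Since the solution map $(U^{m-1},V^{m-1},\Delta_m W)\mapsto(U^m,V^m)$ is continuous and $\Delta_m W$ is $\mathcal{F}_{t_m}$-measurable and independent of $\mathcal{F}_{t_{m-1}}$, assertion~(i) follows.

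\textbf{Second moment estimate.} I would test~\eqref{eq3.3.1}$_1$ with $\varphi=U^m\in\mathbb{V}_h$. From the second line of~\eqref{eq3.3.1} with $\psi=U^m$ one gets $(V^m,U^m)=\|U^m\|_\alpha^2$, and symmetry of the right-hand side in $m\leftrightarrow m-1$ gives $(V^{m-1},U^m)=(V^m,U^{m-1})$, from which~\eqref{eq identity (a-b,a)} entails the telescoping identity
\[ (V^m-V^{m-1},U^m)=\tfrac12\bigl(\|U^m\|_\alpha^2-\|U^{m-1}\|_\alpha^2+\|U^m-U^{m-1}\|_\alpha^2\bigr). \]
Lemma~\ref{lemma discrete diff filter Hh}(ii) handles the viscous term as $(\nabla V^m,\nabla U^m)=\|\nabla U^m\|_{\mathbb{L}^2}^2+\alpha^2\|\Delta^h U^m\|_{\mathbb{L}^2}^2$, and the trilinear term vanishes. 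The sublinear bounds of~$(S_2)$ combined with Cauchy--Schwarz and Young's inequality control the deterministic forcing by absorbing a fraction of the viscous contribution. For the noise I split $(g(t_{m-1},U^{m-1})\Delta_m W,U^m)=(g\Delta_m W,U^{m-1})+(g\Delta_m W,U^m-U^{m-1})$: the first summand is a martingale increment since $U^{m-1}$ is $\mathcal{F}_{t_{m-1}}$-measurable, and the second is dominated by Young's inequality together with the discrete It\^o isometry $\mathbb{E}\|g\Delta_m W\|_{\mathbb{L}^2}^2\le k\,\mathrm{Tr}(Q)\,\mathbb{E}\|g\|_{\mathscr L_2}^2$, absorbing a fraction of $\|U^m-U^{m-1}\|_\alpha^2$. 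Summing over $m$, taking expectation, and invoking discrete Gronwall yields~(ii) without the $\max$. To move $\max$ inside the expectation, Davis's inequality (discrete BDG) bounds $\mathbb{E}\max_n|\sum_{m\le n}(g\Delta_m W,U^{m-1})|$ by $C\,\mathbb{E}\bigl(\sum_m k\|g\|_{\mathscr L_2}^2\|U^{m-1}\|_\alpha^2\bigr)^{1/2}$, which is absorbed again by Young.

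\textbf{Fourth moment estimate and main difficulty.} For~(iii) I would multiply the per-step identity obtained above by $\|U^m\|_\alpha^2$ and apply the discrete polarisation $a^2(a^2-b^2)=\tfrac12(a^4-b^4)+\tfrac12(a^2-b^2)^2$ with $a=\|U^m\|_\alpha$, $b=\|U^{m-1}\|_\alpha$: this creates a telescoping fourth-power term together with the positive contributions $\|U^m\|_\alpha^2\|U^m-U^{m-1}\|_\alpha^2$ and $\|U^m\|_\alpha^2\bigl(\|\nabla U^m\|^2+\alpha^2\|\Delta^hU^m\|^2\bigr)$ appearing on the left-hand side of the claim. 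The main obstacle is the stochastic term: to preserve the martingale structure one must extract a predictable multiplier by writing
\[ \|U^m\|_\alpha^2(g\Delta_m W,U^m)=\|U^{m-1}\|_\alpha^2(g\Delta_m W,U^{m-1})+R_m, \]
where $R_m$ collects cross terms involving $\|U^m\|_\alpha^2-\|U^{m-1}\|_\alpha^2$ and $U^m-U^{m-1}$. Each of these remainders must be bounded by successive applications of Cauchy--Schwarz, the discrete It\^o isometry, and Young's inequality, either absorbing them into the viscous and increment terms on the left or controlling them by $\mathbb{E}\|U^{m-1}\|_\alpha^4$ on the right, which is Gronwall-compatible. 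Assumption $(S_3)$ provides the $L^4$ initialisation. Davis's inequality applied to $\sum_m\|U^{m-1}\|_\alpha^2(g\Delta_m W,U^{m-1})$, together with the already proved bound~(ii), and a final discrete Gronwall conclude the proof; the principal bookkeeping effort is in tracking the constants to confirm their independence from $\alpha$, $k$ and $h$.
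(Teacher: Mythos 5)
Your treatment of the moment estimates (ii)--(iii) — the heart of the lemma — is essentially the paper's own argument: the same choice $\varphi=\psi=U^{m}$, the same telescoping identity in the $\left|\left|\cdot\right|\right|_{\alpha}$ inner product, Lemma~\ref{lemma discrete diff filter Hh}-(ii) for the viscous term, the same splitting of the noise into a martingale increment plus a part absorbed into $\tfrac14\left|\left|U^{m}-U^{m-1}\right|\right|_{\alpha}^{2}$ via the tower property, discrete Gronwall first and a Burkholder--Davis--Gundy step afterwards to bring the maximum inside the expectation, and for (iii) the same polarisation identity together with the extraction of the predictable multiplier $\left|\left|U^{m-1}\right|\right|_{\alpha}^{2}$ in front of the stochastic increment. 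Where you genuinely depart from the paper is the solvability step: the paper defines the map $\mathcal{F}^{\omega}_{\ell-1}$ and invokes a Brouwer fixed-point corollary, which yields existence only, with uniqueness deferred to Lemma~\ref{lemma3.3.2}; you instead eliminate $V^{m}=U^{m}-\alpha^{2}\Delta^{h}U^{m}$ and note that the per-step problem is a square \emph{linear} system on $\mathbb{V}_{h}$ whose form satisfies $a(U,U)\geq\left|\left|U\right|\right|_{\alpha}^{2}$, so existence, uniqueness and continuity of the solution map (hence $\mathcal{F}_{t_{m}}$-measurability, without any selection argument) follow at once from finite-dimensional coercivity. Your route is cleaner and subsumes Lemma~\ref{lemma3.3.2}; the paper's fixed-point route is the more robust template, since it would survive a variant of the scheme in which the trilinear term is taken implicitly in both arguments, i.e.\ a genuinely nonlinear per-step problem. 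Two small points to tighten when writing the details: in the fourth-moment step the remainders produce $\left|\left|g(t_{m-1},U^{m-1})\right|\right|^{4}_{\mathscr{L}_{2}(K,\mathbb{L}^{2})}\left|\left|\Delta_{m}W\right|\right|^{4}_{K}$, so you need the fourth-moment bound \eqref{eq 2.3.4} with $r=2$ (giving $\mathbb{E}\left[\left|\left|\Delta_{m}W\right|\right|_{K}^{4}\right]\lesssim k^{2}Tr(Q)^{2}$), not merely the It\^o isometry you quote; and note that both you and the paper apply Proposition~\ref{prop trilinear form}-(ii) to iterates that are only discretely divergence-free, so the cancellation $\tilde{b}(U^{m},V^{m-1},U^{m})=0$ is taken here in the same sense as in the paper rather than as a consequence of the proposition as literally stated.
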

	\begin{proof}
		\textit{Solvability}\\
		To prove the Algorithm's solvability, we will follow a technique similar to that in \cite[Lemma 4.1]{Banas2014Prohl} while relying on equations~\eqref{eq3.3.1}. Since $V^{m} \in \mathbb{V}_{h}$ for all $m \in \{1, \dotsc, M\}$ then, by Lemma~\ref{lemma discrete diff filter Hh}-$(i)$, we get $V^{m} = U^{m} - \alpha^{2}\Delta^{h}U^{m}$, $\mathbb{P}$-a.s. and a.e. in $D$. This means that the existence of $U^{m}$ implies that of $V^{m}$. Assume that, for some $2 \leq \ell \leq M$ and for almost every $\omega \in \Omega$, a sequence $\{(U^{m}(\omega), V^{m}(\omega))\}_{m=1}^{\ell - 1}$ has been found by induction.
		For $\omega \in \Omega$, define $\mathbb{P}$-a.s. the mapping $\mathcal{F}_{\ell - 1}^{\omega} \colon \mathbb{V}_{h} \to \mathbb{V}_{h}'$ by
		\begin{align*}
			\mathcal{F}_{\ell - 1}^{\omega}(\varphi) &\coloneqq \varphi - \alpha^{2}\Delta^{h}\varphi - V^{\ell-1}(\omega) - k\nu\left(\Delta \varphi -\alpha^{2}\Delta \Delta^{h}\varphi\right) + k[\varphi\cdot \nabla]V^{\ell - 1}(\omega)  \\&\hspace{8pt}+ k \left(\nabla \varphi\right)^{T}\cdot V^{\ell - 1}(\omega) - kf(t_{\ell-1}, U^{\ell-1}(\omega)) - g(t_{\ell-1}, U^{\ell-1}(\omega))\Delta_{\ell}W(\omega),
		\end{align*}
		for all $\varphi \in \mathbb{V}_{h}$. The continuity of $\mathcal{F}_{\ell - 1}^{\omega}$ can be shown by a straightforward argument. Since, $\mathbb{V}_{h}$ equipped with the inner product $\left(\cdot, \cdot\right)$, is a Hilbert space, then by Riesz representation theorem, functional $\mathcal{F}_{\ell - 1}^{\omega}$ can be defined through the $L^{2}$-inner product, namely for $\varphi \in \mathbb{V}_{h}$, $\left(\mathcal{F}_{\ell - 1}^{\omega}(\varphi)\right)(\psi) = \left(\mathcal{F}_{\ell - 1}^{\omega}(\varphi), \psi\right)$ for all $\psi \in \mathbb{V}_{h}$. Therefore, for $\psi = \varphi \in \mathbb{V}_{h}$ and by Proposition~\ref{prop trilinear form}-$(ii)$, the discrete Laplace operator~\eqref{def discrete Laplace}, assumption~$(S_{2})$, the Cauchy-Schwarz and Young inequalities,
		\begin{align*}
			&\left(\mathcal{F}_{\ell - 1}^{\omega}(\varphi), \varphi \right) \geq ||\varphi||_{\mathbb{L}^{2}}^{2} + (\alpha^{2} + k\nu)||\nabla \varphi||^{2}_{\mathbb{L}^{2}} - ||V^{\ell - 1}(\omega)||_{\mathbb{L}^{2}}||\varphi||_{\mathbb{L}^{2}} + k\nu\alpha^{2}||\Delta^{h}\varphi||^{2}_{\mathbb{L}^{2}} \\&- k\left(K_{3} + K_{4}||U^{\ell - 1}(\omega)||_{\alpha}\right)||\varphi||_{\mathbb{H}^{1}} - \left(K_{1} + K_{2}||U^{\ell - 1}(\omega)||_{\alpha}\right)||\Delta_{\ell}W(\omega)||_{K}||\varphi||_{\mathbb{L}^{2}}
			\\&\geq \frac{1}{2}||\varphi||_{\mathbb{L}^{2}}^{2} + (\alpha^{2} + \frac{k\nu}{2})||\nabla \varphi||_{\mathbb{L}^{2}} - ||V^{\ell - 1}(\omega)||^{2}_{\mathbb{L}^{2}} - \frac{kC_{D}^{2}}{2\nu}\left(K_{3} + K_{4}||U^{\ell - 1}(\omega)||_{\alpha}\right)^{2} \\&- \left(K_{1} + K_{2}||U^{\ell - 1}(\omega)||_{\alpha}\right)^{2}||\Delta_{\ell}W(\omega)||^{2}_{K} \geq \frac{1}{2}||\varphi||^{2}_{\mathbb{L}^{2}} - L_{\ell - 1}(\omega),
		\end{align*}
		where $L_{\ell - 1} \coloneqq 2K_{1}^{2}||\Delta_{\ell}W||_{K}^{2} + \frac{kC_{D}^{2}K_{3}^{2}}{\nu} + ||V^{\ell - 1}||_{\mathbb{L}^{2}}^{2} + \left(\frac{kC_{D}^{2}K_{4}^{2}}{\nu} + 2K_{2}^{2}||\Delta_{\ell}W||^{2}_{K}\right)||U^{\ell - 1}||^{2}_{\alpha}$.
		By \eqref{eq 2.3.4} and the induction's hypothesis, there holds $\mathbb{P}$-a.s. $L_{\ell - 1}(\omega) < +\infty$. Therefore, taking $\varphi \in \mathbb{V}_{h}$ such that $||\varphi||_{\mathbb{L}^{2}} = \sqrt{2L_{\ell - 1}(\omega)}$ yields $\left(\mathcal{F}_{\ell - 1}^{\omega}(\varphi), \varphi\right) \geq 0$. Subsequently, Brouwer's fixed point theorem (see \cite[Corollary 1.1, p. 279]{girault2012finite}) ensures the existence (but not uniqueness) of a $\phi = \phi(\omega)  \in \mathbb{V}_{h}$ such that $\mathcal{F}^{\omega}_{\ell - 1}(\phi) = 0$. Hence, $(U^{\ell}, V^{\ell}) \in \mathbb{V}_{h}\times \mathbb{V}_{h}$ exists $\mathbb{P}$-a.s. . The discrete LBB-condition \eqref{eq LBB condition} yields the existence of an $L_{h}\times L_{h}$-valued process $\{(\Pi^{m}, \tilde{\Pi}^{m})\}_{m=1}^{M}$ satisfying Algorithm~\ref{Algorithm}.\\
		\textit{Measurabililty} \\
		After proving the algorithm's solvability through the functional $\mathcal{F}^{\omega}_{\ell - 1}$, the measurability of iterates $U^{m}$, $m \in \{1, \dotsc, M\}$ follows by induction (see \cite[Lemma 4.1]{Banas2014Prohl}). Moreover, by Lemma~\ref{lemma discrete diff filter Hh}-$(i)$, one infers the measurability of $\{V^{m}\}_{m=1}^{M}$.\\
		\textit{A priori energy estimate} \\
		Let us denote by $\left|\left|\cdot\right|\right|_{h,\alpha}^{2}$ the quantity $\left|\left|\nabla\cdot\right|\right|^{2}_{\mathbb{L}^{2}} + \alpha^{2}\left|\left|\Delta^{h}\cdot\right|\right|^{2}_{\mathbb{L}^{2}}$. In equation \eqref{eq3.3.1}, we take $\varphi = \psi = U^{m}$ and employ identity~\eqref{eq identity (a-b,a)} and Lemma~\ref{lemma discrete diff filter Hh}-$(ii)$:
		\begin{equation}\label{eq3.3.4}
			\begin{aligned}
				&\frac{1}{2}\left(||U^{m}||_{\alpha}^{2} - ||U^{m-1}||_{\alpha}^{2} + ||U^{m} - U^{m-1}||_{\alpha}^{2}\right) + k\nu ||U^{m}||^{2}_{h,\alpha} = k \langle f(t_{m-1}, U^{m-1}), U^{m} \rangle \\&+ \left(g(t_{m-1}, U^{m-1})\Delta_{m}W, U^{m} - U^{m-1}\right) + \left(g(t_{m-1}, U^{m-1})\Delta_{m}W, U^{m-1}\right).
			\end{aligned}
		\end{equation}
		After employing the Cauchy-Schwarz and Young inequalities along with assumption $(S_{2})$, we take the sum over $m$ from $1$ to $M$:
		\begin{equation}\label{eq 3.3.4'}
			\begin{aligned}
				&\frac{1}{2}||U^{M}||_{\alpha}^{2} - \frac{1}{2}||U^{0}||_{\alpha}^{2} + \frac{1}{4}\sum_{m=1}^{M}||U^{m} - U^{m-1}||_{\alpha}^{2} + \frac{k\nu}{2} \sum_{m=1}^{M}||U^{m}||^{2}_{h,\alpha} \\& \leq \frac{C_{D}^{2}TK_{3}^{2}}{\nu} + \frac{C_{D}^{2}K_{4}^{2}}{\nu}k\sum_{m=1}^{M}||U^{m-1}||_{\alpha}^{2} + \sum_{m=1}^{M}||g(t_{m-1}, U^{m-1})\Delta_{m}W||_{\mathbb{L}^{2}}^{2} \\&\hspace{10pt}+ \sum_{m=1}^{M}(g(t_{m-1}, U^{m-1})\Delta_{m}W,U^{m-1}).
			\end{aligned}
		\end{equation}
		Due to the measurability of $U^{m}$, the last term on the right-hand side vanishes when taking its expectation. The penultimate term is controlled as follows:
		\begin{equation}\label{eq 3.3.6}
			\begin{aligned}
				&\mathbb{E}\left[||g(t_{m-1}, U^{m-1})||_{\mathscr{L}_{2}(K,\mathbb{L}^{2})}^{2}||\Delta_{m}W||_{K}^{2}\right] \\&= \mathbb{E}\left[||g(t_{m-1},U^{m-1})||_{\mathscr{L}_{2}(K,\mathbb{L}^{2})}^{2}\right]\mathbb{E}\left[||\Delta_{m}W||_{K}^{2} \Big| \mathcal{F}_{t_{m-1}}\right] \\&\leq 2Tr(Q)K_{1}^{2}k + 2K_{2}^{2}kTr(Q)\mathbb{E}\left[||U^{m-1}||_{\alpha}^{2}\right],
			\end{aligned}
		\end{equation}
		thanks to the tower property of the conditional expectation, the increments independence of the Wiener process, property \eqref{eq 2.3.4}, and assumption $(S_{2})$. Plugging estimate~\eqref{eq 3.3.6} in equation~\eqref{eq 3.3.4'} returns
		\begin{equation}\label{eq3.3.5}
			\begin{aligned}
				&\frac{1}{2}\mathbb{E}\left[||U^{M}||_{\alpha}^{2}\right] + \frac{1}{4}\sum_{m=1}^{M}\mathbb{E}\left[||U^{m} - U^{m-1}||_{\alpha}^{2}\right] + \frac{k\nu}{2} \sum_{m=1}^{M}\mathbb{E}\left[||U^{m}||^{2}_{h,\alpha}\right]
				\leq \frac{1}{2}\mathbb{E}\left[||U^{0}||_{\alpha}^{2}\right] \\&+ \left(\frac{C_{D}^{2}K_{3}^{2}}{\nu} + 2Tr(Q)K_{1}^{2}\right)T + \left(\frac{C_{D}^{2}K_{4}^{2}}{\nu} + 2K_{2}^{2}Tr(Q)\right)k\sum_{m=0}^{M-1}\mathbb{E}\left[||U^{m}||_{\alpha}^{2}\right].
			\end{aligned}
		\end{equation}
		Now, we employ the discrete Gronwall inequality (see for instance \cite[Lemma 10.5]{thomee2007galerkin}) in order to prove the sought estimate. We replace $M$ in equation \eqref{eq3.3.5} by any other index $\ell \geq 1$. We get
		\begin{equation*}
			\begin{aligned}
				\mathbb{E}\left[||U^{\ell}||_{\alpha}^{2}\right] \leq \left[\mathbb{E}\left[||U^{0}||_{\mathbb{H}^{1}}^{2}\right] + 2\left(\frac{C_{D}^{2}K_{3}^{2}}{\nu} + 2Tr(Q)K_{1}^{2}\right)T\right] e^{T\left(\frac{C_{D}^{2}K_{4}^{2}}{\nu} + 2K_{2}^{2}Tr(Q)\right)} \eqqcolon K_{T}
			\end{aligned}
		\end{equation*}
		for all $\ell \in \{1, \dotsc , M\}$, where $||U^{0}||_{\alpha} \leq ||U^{0}||_{\mathbb{H}^{1}}$ thanks to \eqref{eq norm equivalence}. Consequently,
		\begin{equation}\label{eq3.3.7}
			\max\limits_{1 \leq m \leq M}\mathbb{E}\left[||U^{m}||_{\alpha}^{2}\right] \leq K_{T}.
		\end{equation}
		By virtue of estimate \eqref{eq3.3.5} and the discrete Gronwall lemma, one also obtains the following two estimates: $\frac{k\nu}{2} \sum_{m=1}^{M}\mathbb{E}\left[||U^{m}||_{h,\alpha}^{2}\right] \leq K_{T}$ and $\frac{1}{4}\sum_{m=1}^{M}\mathbb{E}\left[||U^{m} - U^{m-1}||_{\alpha}^{2}\right] \leq K_{T}$.
		We still need to prove $\mathbb{E}\left[\max\limits_{1 \leq m \leq M}||U^{m}||_{\alpha}^{2}\right] \leq C_{T}$, for a certain positive constant $C_{T}$ independent of $\alpha$, $k$ and $h$. To this end, we make use of estimate~\eqref{eq 3.3.4'}, but this time by summing from $m=1$ to $m=\ell$ where $\ell \geq 1$ is an integer. Then, we take the maximum over $\ell$ and apply the mathematical expectation on both sides to get
		\begin{equation}\label{eq3.3.4''}
			\begin{aligned}
				&\frac{1}{2}\mathbb{E}\left[\max\limits_{1 \leq \ell \leq M}||U^{\ell}||_{\alpha}^{2}\right] \leq \frac{1}{2}\mathbb{E}\left[||U^{0}||_{\alpha}^{2}\right] + \frac{C_{D}^{2}TK_{3}^{2}}{\nu} + \frac{C_{D}^{2}K_{4}^{2}}{\nu}k\sum_{m=1}^{M}\mathbb{E}\left[||U^{m-1}||_{\alpha}^{2}\right] \\&+ \sum_{m=1}^{M}\mathbb{E}\left[||g(t_{m-1},U^{m-1})\Delta_{m}W||_{\mathbb{L}^{2}}^{2}\right] + \mathbb{E}\left[\max\limits_{1 \leq \ell \leq M}\sum_{m=1}^{\ell}(g(t_{m-1},U^{m-1})\Delta_{m}W,U^{m-1})\right].
			\end{aligned}
		\end{equation}
		To bound the last term on the right-hand side, we use assumption $(S_{2})$, the Burkholder-Davis-Gundy and Young inequalities, after considering the sum as the stochastic integral of a piecewise constant integrand:
		\begin{equation}\label{eq3.3.9}
			\begin{aligned}
				&\mathbb{E}\left[\max\limits_{1 \leq \ell \leq M}\sum_{m=1}^{\ell}\left(g(t_{m-1}, U^{m-1})\Delta_{m}W,U^{m-1}\right)\right] \\&\lesssim \mathbb{E}\left[\left(k\sum_{m=1}^{M}\left|\left|g(t_{m-1},U^{m-1})\right|\right|_{\mathscr{L}_{2}(K,\mathbb{L}^{2})}^{2}||U^{m-1}||_{\mathbb{L}^{2}}^{2}\right)^{1/2}\right]
				\\&\leq \frac{1}{4}\mathbb{E}\left[||U^{0}||_{\mathbb{L}^{2}}^{2}\right] + 2K_{1}^{2}T + \mathbb{E}\left[\frac{1}{4}\max\limits_{1 \leq \ell \leq M}||U^{\ell}||_{\mathbb{L}^{2}}^{2} + 2K_{2}^{2}k\sum_{m=1}^{M}||U^{m-1}||_{\alpha}^{2}\right].
			\end{aligned}
		\end{equation}
		Returning to estimate~\eqref{eq3.3.4''}, we avail ourselves of \eqref{eq 3.3.6}, \eqref{eq3.3.7} and \eqref{eq3.3.9} to conclude
		\begin{align*}
			&\mathbb{E}\left[\max\limits_{1 \leq m \leq M}||U^{m}||_{\alpha}^{2}\right] \leq C_{T},
		\end{align*}
		where $C_{T}>0$ depends only on the parameters of $K_{T}$.\\
		\textit{Bounds for higher velocity moments}\\
		We start by multiplying equation \eqref{eq3.3.4} by the norm $||U^{m}||_{\alpha}^{2}$.
		\begin{equation}\label{eq higher moment velocity}
			\begin{aligned}
				&\frac{1}{2}||U^{m}||_{\alpha}^{4} - \frac{1}{2}||U^{m-1}||_{\alpha}^{2}||U^{m}||_{\alpha}^{2} + \frac{1}{2}||U^{m} - U^{m-1}||_{\alpha}^{2}||U^{m}||_{\alpha}^{2} + k\nu ||U^{m}||_{h,\alpha}^{2}||U^{m}||_{\alpha}^{2} \\&= k \langle f(t_{m-1}, U^{m-1}), U^{m} \rangle ||U^{m}||_{\alpha}^{2} + \left(g(t_{m-1},U^{m-1})\Delta_{m}W, U^{m} - U^{m-1}\right)||U^{m}||_{\alpha}^{2} \\&\hspace{10pt}+ \left(g(t_{m-1},U^{m-1})\Delta_{m}W, U^{m-1}\right)||U^{m}||_{\alpha}^{2} =  I + II + III.
			\end{aligned}
		\end{equation}
		For $I$, we apply the norm equivalence~\eqref{eq norm equivalence}, the Young inequality and estimate $|a+b|^{p} \leq 2^{p-1}(|a|^{p} + |b|^{p})$ for $p=4$:
		\begin{align*}
			&I \leq kC_{D}\left(K_{3} + K_{4}||U^{m-1}||_{\alpha}\right)||\nabla U^{m}||_{\mathbb{L}^{2}}^{\frac{3}{2}}||U^{m}||^{\frac{3}{2}}_{\alpha} \leq \frac{kC_{D}^{4}}{4\nu^{3}}\left(K_{3} + K_{4}||U^{m-1}||_{\alpha}\right)^{4} \\&+ \frac{3k\nu}{4}||U^{m}||_{h,\alpha}^{2}||U^{m}||^{2}_{\alpha} \leq \frac{2kC_{D}^{4}K_{3}^{4}}{\nu^{3}} + \frac{2kC_{D}^{4}K_{4}^{4}}{\nu^{3}}||U^{m-1}||^{4}_{\alpha} + \frac{3k\nu}{4}||U^{m}||_{h,\alpha}^{2}||U^{m}||^{2}_{\alpha}.
		\end{align*}
		For $II$,
		\begin{align*}
			II & \leq ||g(t_{m-1},U^{m-1})||_{\mathscr{L}_{2}(K,\mathbb{L}^{2})}^{2}||\Delta_{m}W||_{K}^{2}\left(||U^{m}||_{\alpha}^{2} - ||U^{m-1}||_{\alpha}^{2} + ||U^{m-1}||_{\alpha}^{2}\right) \\&\hspace{10pt}+ \frac{1}{4}||U^{m} - U^{m-1}||_{\mathbb{L}^{2}}^{2}||U^{m}||_{\alpha}^{2} \\& \leq ||g(t_{m-1}, U^{m-1})||_{\mathscr{L}_{2}(K,\mathbb{L}^{2})}^{2}||\Delta_{m}W||_{K}^{2}||U^{m-1}||_{\alpha}^{2} + \frac{1}{16}\left| ||U^{m}||_{\alpha}^{2} - ||U^{m-1}||_{\alpha}^{2}\right|^{2} \\&\hspace{10pt}+ 4||g(t_{m-1}, U^{m-1})||_{\mathscr{L}_{2}(K,\mathbb{L}^{2})}^{4}||\Delta_{m}W||_{K}^{4} + \frac{1}{4}||U^{m} - U^{m-1}||_{\mathbb{L}^{2}}^{2}||U^{m}||_{\alpha}^{2}.
		\end{align*}
		For $III$,
		\begin{align*}
			III &\coloneqq \left(g(t_{m-1}, U^{m-1})\Delta_{m}W, U^{m-1}\right)\left(||U^{m}||_{\alpha}^{2} - ||U^{m-1}||_{\alpha}^{2} + ||U^{m-1}||_{\alpha}^{2}\right) \\& \leq \left(g(t_{m-1},U^{m-1})\Delta_{m}W, U^{m-1}\right)||U^{m-1}||_{\alpha}^{2} + \frac{1}{16}\left| ||U^{m}||_{\alpha}^{2} - ||U^{m-1}||_{\alpha}^{2}\right|^{2} \\&\hspace{10pt} + 4||g(t_{m-1},U^{m-1})||_{\mathscr{L}_{2}(K,\mathbb{L}^{2})}^{2}||\Delta_{m}W||_{K}^{2}||U^{m-1}||_{\alpha}^{2}.
		\end{align*}
		Equation \eqref{eq higher moment velocity} becomes
		\begin{align*}
			&\frac{1}{2}||U^{m}||_{\alpha}^{4} - \frac{1}{2}||U^{m-1}||_{\alpha}^{2}||U^{m}||_{\alpha}^{2} + \frac{1}{4}||U^{m} - U^{m-1}||_{\alpha}^{2}||U^{m}||_{\alpha}^{2} + \frac{k\nu}{4}||U^{m}||_{h,\alpha}^{2}||U^{m}||_{\alpha}^{2} \\&\leq \frac{2kC_{D}^{4}K_{3}^{4}}{\nu^{3}} + \frac{2kC_{D}^{4}K_{4}^{4}}{\nu^{3}}||U^{m-1}||^{4}_{\alpha} + \frac{1}{8}\left| ||U^{m}||_{\alpha}^{2} - ||U^{m-1}||_{\alpha}^{2}\right|^{2} \\&\hspace{10pt}+ \left(g(t_{m-1},U^{m-1})\Delta_{m}W, U^{m-1}\right)||U^{m-1}||_{\alpha}^{2} +4||g(t_{m-1},U^{m-1})||_{\mathscr{L}_{2}(K,\mathbb{L}^{2})}^{4}||\Delta_{m}W||_{K}^{4} \\&\hspace{10pt}+ 5||g(t_{m-1},U^{m-1})||^{2}_{\mathscr{L}_{2}(K,\mathbb{L}^{2})}||\Delta_{m}W||^{2}_{K}||U^{m-1}||_{\alpha}^{2}.
		\end{align*}
		Note that $||U^{m}||_{\alpha}^{4} - ||U^{m-1}||_{\alpha}^{2}||U^{m}||_{\alpha}^{2} = \frac{1}{2}(||U^{m}||_{\alpha}^{4} - ||U^{m-1}||_{\alpha}^{4} + \left| ||U^{m}||_{\alpha}^{2} - ||U^{m-1}||_{\alpha}^{2}\right|^{2})$,
		therefore
		\begin{equation}\label{eq3.3.12}
			\begin{aligned}
				&\frac{1}{4}\Big(||U^{m}||_{\alpha}^{4} - ||U^{m-1}||_{\alpha}^{4} + \frac{1}{2}\left| ||U^{m}||_{\alpha}^{2} - ||U^{m-1}||_{\alpha}^{2}\right|^{2} + ||U^{m} - U^{m-1}||_{\alpha}^{2}||U^{m}||_{\alpha}^{2} \\&+ k\nu||U^{m}||_{h,\alpha}^{2}||U^{m}||_{\alpha}^{2}\Big) \leq \frac{2kC_{D}^{4}K_{3}^{4}}{\nu^{3}} + \frac{2kC_{D}^{4}K_{4}^{4}}{\nu^{3}}||U^{m-1}||^{4}_{\alpha} \\&+ \left(g(t_{m-1},U^{m-1})\Delta_{m}W, U^{m-1}\right)||U^{m-1}||_{\alpha}^{2}+ 4||g(t_{m-1},U^{m-1})||_{\mathscr{L}_{2}(K,L^{2})}^{4}||\Delta_{m}W||_{K}^{4} \\& +  5||g(t_{m-1},U^{m-1})||^{2}_{\mathscr{L}_{2}(K,\mathbb{L}^{2})}||\Delta_{m}W||^{2}_{K}||U^{m-1}||_{\alpha}^{2},
			\end{aligned}
		\end{equation}
		Proceeding as \eqref{eq 3.3.6}, the penultimate term can be estimated as follows
		\begin{equation}\label{eq3.3.12'}
			\mathbb{E}\left[||g(t_{m-1},U^{m-1})||_{\mathscr{L}_{2}(K,\mathbb{L}^{2})}^{4}||\Delta_{m}W||^{4}_{K}\right] \lesssim K_{1}^{4}Tr(Q)^{2}k^{2} + K_{2}^{4}Tr(Q)^{2}k^{2}\mathbb{E}\left[||U^{m-1}||_{\alpha}^{4}\right].
		\end{equation}
		Next, we bound the last term  on the right-hand side of \eqref{eq3.3.12}
		\begin{equation}\label{eq bound in g}
			\begin{aligned}
				&\mathbb{E}\left[||g(t_{m-1},U^{m-1})||_{\mathscr{L}_{2}(K,\mathbb{L}^{2})}^{2}||\Delta_{m}W||_{K}^{2}||U^{m-1}||_{\alpha}^{2}\right] \\&\lesssim K_{1}^{2}kTr(Q)\mathbb{E}\left[||U^{m-1}||^{2}_{\alpha}\right] +  K_{2}^{2}Tr(Q)k\mathbb{E}\left[||U^{m-1}||_{\alpha}^{4}\right].
			\end{aligned}
		\end{equation}
		The third term on the right-hand side of \eqref{eq3.3.12} vanishes after taking its expectation, thanks to the measurability of the iterates $U^{m}, \ m \in \{1, \dotsc, M\}$. We collect and plug the above estimates back in \eqref{eq3.3.12}, and we sum it up over $m$ from $m=1$ to $m=M$. Then, we apply the mathematical expectation, and employ the discrete Gronwall lemma to get
		\begin{equation}\label{eq3.3.13}
			\max\limits_{1 \leq m \leq M} \mathbb{E}\left[||U^{m}||_{\alpha}^{4}\right] \leq C_{T,2},
		\end{equation}
		where $C_{T,2} > 0$ does not depend on $\alpha$, $k$ and $h$. We also get by Gronwall lemma the following two estimates: $$\frac{1}{4}\mathbb{E}\left[\sum_{m=1}^{M}||U^{m} - U^{m-1}||_{\alpha}^{2}||U^{m}||_{\alpha}^{2}\right] \leq C_{T,2} \mbox{ and } \frac{k\nu}{4}\mathbb{E}\left[\sum_{m=1}^{M}||U^{m}||_{h,\alpha}^{2}||U^{m}||_{\alpha}^{2}\right] \leq C_{T,2}.$$
		It remains to show that $\mathbb{E}\left[\max\limits_{1 \leq m \leq M}||U^{m}||_{\alpha}^{4}\right] \leq C_{T,2}$. To do so, we follow the technique which was employed in the previous step (A priori energy estimate) by summing up inequality~\eqref{eq3.3.12} over $m$ from $1$ to $\ell \geq 1$. We will only need to control the following stochastic term:
		\begin{align*}
			&\mathbb{E}\left[\max\limits_{1 \leq \ell\leq M}\sum_{m=1}^{\ell}\left(g(t_{m-1},U^{m-1})\Delta_{m}W,U^{m-1}\right)||U^{m-1}||_{\alpha}^{2}\right] \\&\lesssim \mathbb{E}\left[\left(k\sum_{m=1}^{M}||g(t_{m-1},U^{m-1})||^{2}_{\mathscr{L}_{2}(K,\mathbb{L}^{2})}||U^{m-1}||_{\alpha}^{6}\right)^{\frac{1}{2}}\right] \\&\leq \mathbb{E}\left[\frac{1}{8}||U^{0}||^{4}_{\mathbb{H}^{1}} + \frac{1}{8}\max\limits_{1 \leq m \leq M}||U^{m}||_{\alpha}^{4} + 4K_{1}^{2}k\sum_{m=1}^{M}||U^{m-1}||_{\alpha}^{2} + 4K_{2}^{2}k\sum_{m=1}^{M}||U^{m-1}||_{\alpha}^{4}\right].
		\end{align*}
		Collecting all estimates together and using \eqref{eq3.3.13} complete the proof.
	\end{proof}
	
	While proving the solvability of Algorithm~\ref{Algorithm}, we found out that the iterates $\{(U^{m}, V^{m})\}_{m=1}^{M}$ might not be unique. We will discuss this property in the upcoming lemma.
	\begin{lem}\label{lemma3.3.2}\ \\
		Iterates $\{(U^{m}, V^{m})\}_{m=1}^{M}$ of Algorithm~\ref{Algorithm}, are unique $\mathbb{P}$-almost surely and almost everywhere in $D$.
	\end{lem}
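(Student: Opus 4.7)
The scheme defining $(U^m, V^m)$ from $(U^{m-1}, V^{m-1})$ is \emph{linear} in the unknowns: the only nonlinearity $\tilde{b}(U^m, V^{m-1}, \varphi)$ is linear in $U^m$ because $V^{m-1}$ is already fixed by the induction hypothesis. The plan is therefore to argue by induction on $m$: suppose $(U^{m-1}, V^{m-1})$ has already been shown unique $\mathbb{P}$-a.s., and consider two candidate solutions $(U^m_i, V^m_i)$, $i = 1,2$, at level $m$. Since their differences $\delta U := U^m_1 - U^m_2$ and $\delta V := V^m_1 - V^m_2$ both lie in $\mathbb{V}_h$, restricting to test functions in $\mathbb{V}_h$ kills the pressure contributions and reduces the problem to the homogeneous version of \eqref{eq3.3.1} with the $f$- and $g$-terms removed.

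Testing the first equation of \eqref{eq3.3.1} with $\varphi = \delta U$ and the second with $\psi = \delta U$, then combining them via Lemma~\ref{lemma discrete diff filter Hh} (which gives $(\delta V, \delta U) = \|\delta U\|_\alpha^2$ and $(\nabla \delta V, \nabla \delta U) = \|\nabla \delta U\|_{\mathbb{L}^2}^2 + \alpha^2\|\Delta^h \delta U\|_{\mathbb{L}^2}^2$), one obtains the energy identity
\begin{equation*}
\|\delta U\|_\alpha^2 + k\nu\bigl(\|\nabla \delta U\|_{\mathbb{L}^2}^2 + \alpha^2\|\Delta^h \delta U\|_{\mathbb{L}^2}^2\bigr) = -k\,\tilde{b}(\delta U, V^{m-1}, \delta U).
\end{equation*}

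The main obstacle is the trilinear remainder on the right. The cancellation $\tilde{b}(z, \cdot, z) = 0$ of Proposition~\ref{prop trilinear form}(ii) is \emph{not} directly available, since $\delta U \in \mathbb{V}_h$ is only discretely divergence-free, not strongly. The plan is to bound the remainder via Proposition~\ref{prop trilinear form}(iii), yielding $k\,|\tilde{b}(\delta U, V^{m-1}, \delta U)| \lesssim k\,\|\delta U\|_{\mathbb{H}^1}^2\,\|V^{m-1}\|_{\mathbb{H}^1}$; by Lemma~\ref{lemma3.3.1} the iterate $V^{m-1}$ is pathwise finite in $\mathbb{H}^1$ for $\mathbb{P}$-a.e.\ $\omega$. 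Using the inverse estimate~\eqref{eq inverse estimate} on $V^{m-1} \in \mathbb{V}_h$ together with the parameter constraint linking $k$, $h$ and $\alpha$ (either $\sqrt{k}/h \leq \alpha$ or $\alpha \leq Ch$, according to which of Theorems~\ref{Thm LANS-alpha} or~\ref{Thm NS} is in force) allows absorbing this bound into the viscous term on the left, leaving $\|\delta U\|_\alpha^2 \leq 0$.

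Consequently $\delta U = 0$ a.e.\ in $D$, $\mathbb{P}$-a.s., and the filter identity of Lemma~\ref{lemma discrete diff filter Hh}(i) then forces $\delta V = \delta U - \alpha^2 \Delta^h \delta U = 0$ as well, completing the induction step. The base case $m = 1$ follows from the same argument, starting from the deterministic datum $U^0$.
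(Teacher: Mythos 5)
Your overall architecture coincides with the paper's: induction over $m$, restriction to test functions in $\mathbb{V}_{h}$ so that the pressure, $f$- and $g$-contributions drop out, the choice $\varphi=\psi=\delta U$, and the energy identity obtained through Lemma~\ref{lemma discrete diff filter Hh}. The divergence is in how the remainder $k\,\tilde{b}(\delta U, V^{m-1},\delta U)$ is treated, and this is where your argument has a genuine gap. The paper makes this term vanish \emph{identically}: by linearity in the first slot and the induction hypothesis $V_{1}^{m-1}=V_{2}^{m-1}$, the difference of the trilinear terms reduces to $\tilde{b}(\delta U, V^{m-1},\varphi)$, which is annihilated for the test choice $\varphi=\delta U$ by the same cancellation (Proposition~\ref{prop trilinear form}-$(ii)$) that the paper already uses for $\mathbb{V}_{h}$-valued fields in the coercivity step of Lemma~\ref{lemma3.3.1}; the identity then reads $\|\delta U\|_{\alpha}^{2}+k\nu\big(\|\nabla \delta U\|_{\mathbb{L}^{2}}^{2}+\alpha^{2}\|\Delta^{h}\delta U\|_{\mathbb{L}^{2}}^{2}\big)=0$ and uniqueness is unconditional, for every fixed $k,h,\alpha$. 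You instead estimate the term via Proposition~\ref{prop trilinear form}-$(iii)$ and try to absorb $C_{D}\,k\,\|V^{m-1}\|_{\mathbb{H}^{1}}\|\delta U\|_{\mathbb{H}^{1}}^{2}$ into the left-hand side. Pathwise, that absorption requires something like $C_{D}\,k\,\|V^{m-1}(\omega)\|_{\mathbb{H}^{1}}\lesssim k\nu+\alpha^{2}$ (using \eqref{eq norm equivalence}), i.e. a \emph{deterministic smallness} of a random quantity. Lemma~\ref{lemma3.3.1} only provides moment bounds: for fixed $k,h$ the iterate $V^{m-1}$ is $\mathbb{P}$-a.s. finite in $\mathbb{H}^{1}$ but not a.s. bounded by any constant, so the required inequality fails on an event of positive probability and you cannot conclude $\|\delta U\|_{\alpha}^{2}\leq 0$ $\mathbb{P}$-a.s. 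The inverse inequality \eqref{eq inverse estimate} together with $\sqrt{k}/h<L\leq\alpha$ or $\alpha\leq \mathcal{C}h$ only trades $\|V^{m-1}\|_{\mathbb{H}^{1}}$ for $h^{-1}\|V^{m-1}\|_{\mathbb{L}^{2}}$ times parameter factors; the unbounded random factor survives. Moreover, the lemma carries no relation between $k$, $h$ and $\alpha$, and the paper's proof needs none, so importing those hypotheses would also weaken the statement.

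Your underlying scruple is legitimate: $\delta U\in\mathbb{V}_{h}$ is only discretely divergence-free, and Proposition~\ref{prop trilinear form}-$(ii)$ is stated for $\mathbb{V}$. But the remedy consistent with the paper is to keep the exact cancellation (as the paper does throughout, from the solvability estimate onward), not to replace it by a size estimate: once the trilinear term is merely bounded rather than cancelled, the linearity of the scheme no longer yields pathwise uniqueness. (A minor point besides: $U^{0}$ need not be deterministic; all the base case uses is that both candidate solutions share the same $U^{0}$, hence the same $V^{0}$ and the same $f$- and $g$-increments.)
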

	\begin{proof}
		Assume that the $\mathbb{V}_{h}\times\mathbb{V}_{h}$-valued processes $\{(U_{1}^{m}, V_{1}^{m})\}_{m=1}^{M}$ and $\{(U^{m}_{2}, V_{2}^{m})\}_{m=1}^{M}$ solve the projected version~\eqref{eq3.3.1} of Algorithm~\ref{Algorithm}, starting from the same initial datum $U^{0}$. For all $m \in \{0, 1, \dotsc, M\}$, denote by $\mathcal{U}^{m} \coloneqq U_{1}^{m} - U_{2}^{m}$ and by $\mathcal{V}^{m} \coloneqq V_{1}^{m} - V^{m}_{2}$. Clearly, $\mathcal{U}^{0} = \mathcal{V}^{0} = 0$ $\mathbb{P}$-almost surely. Replace both solutions in equation \eqref{eq3.3.1} and subtract them to get for all $(\varphi, \psi) \in \mathbb{V}_{h}\times \mathbb{V}_{h}$:
		\begin{equation}\label{eq algo subtracted}
			\begin{cases}
				\begin{aligned}
					\bullet &\left(\mathcal{V}^{m} - \mathcal{V}^{m-1}, \varphi\right) + k\nu\left(\nabla \mathcal{V}^{m}, \nabla\varphi\right) + k[\tilde{b}(U^{m}_{1}, V^{m-1}_{1}, \varphi) - \tilde{b}(U^{m}_{2}, V^{m-1}_{2}, \varphi)] \\&= k \langle f(t_{m-1}, U_{1}^{m-1})- f(t_{m-1}, U_{2}^{m-1}), \varphi \rangle + \left(g(t_{m-1}, U_{1}^{m-1}) - g(t_{m-1}, U_{2}^{m-1}), \varphi\right),
				\end{aligned}\\
				\bullet\left(\mathcal{V}^{m}, \psi\right) = \left(\mathcal{U}^{m}, \psi\right) + \alpha^{2}\left(\nabla \mathcal{U}^{m}, \nabla \psi\right).
			\end{cases}
		\end{equation}
		We proceed by induction. For $m=1$, the trilinear term becomes $\tilde{b}(\mathcal{U}^{1}, 0, \varphi) = 0$ and equation~\eqref{eq algo subtracted}$_{1}$ leads to $\left(\mathcal{V}^{1}, \varphi\right) + k\nu\left(\nabla \mathcal{V}^{1}, \nabla\varphi\right) = 0$. Taking $\varphi = \psi = \mathcal{U}^{1}$ in both \eqref{eq algo subtracted}$_{2}$ and the latter equation and subtracting them yield $||\mathcal{U}^{1}||_{\alpha}^{2} = -k\nu\left(\nabla \mathcal{V}^{1}, \nabla\mathcal{U}^{1}\right)$. From Lemma~\ref{lemma discrete diff filter Hh}-$(i)$, one gets $\nabla \mathcal{V}^{1} = \nabla \mathcal{U}^{1} - \alpha^{2}\nabla\Delta^{h}\mathcal{U}^{1}$ $\mathbb{P}$-a.s. and a.e. in $D$. Thereby, $||\mathcal{U}^{1}||_{\alpha}^{2} = -k\nu||\nabla \mathcal{U}^{1}||_{\mathbb{L}^{2}}^{2} + k\nu\alpha^{2}\left(\nabla \Delta^{h}\mathcal{U}^{1}, \nabla \mathcal{U}^{1}\right)$. Using identity~\eqref{def discrete Laplace}, we infer that $$||\mathcal{U}^{1}||^{2}_{\alpha} + k\nu||\nabla \mathcal{U}^{1}||^{2}_{\mathbb{L}^{2}} + k\nu\alpha^{2}||\Delta^{h}\mathcal{U}^{1}||^{2}_{\mathbb{L}^{2}} = 0, \mbox{ } \mathbb{P}\mbox{-a.s.}$$ Hence, $\mathcal{U}^{1} = \mathcal{V}^{1} = 0$, $\mathbb{P}$-a.s. and a.e. in $D$. By following the same technique while assuming $(U_{1}^{m-1}, V^{m-1}_{1}) = (U^{m-1}_{2}, V_{2}^{m-1})$, one gets eventually the uniqueness $\mathbb{P}$-a.s. and a.e. in $D$ for all $m \in \mathbb{N}$.
	\end{proof}
	
	In order to obtain a priori estimates for $\{V^{m}\}_{m=1}^{M}$ in Sobolev spaces, independently of the discretization's parameters, we shall assume that there is a positive constant $\mathcal{C}$, independent of $\alpha$, $h$ and $k$ such that $0 < \alpha \leq \mathcal{C}h$. We will present in Lemma~\ref{lemma V < U} some preliminary estimates.
	
	\begin{lem}\label{lemma V < U}\ \\
		Let $\{(U^{m}, V^{m})\}_{m=1}^{M}$ be the iterates of Algorithm~\ref{Algorithm} and $0 < \alpha \leq \mathcal{C}h$, where $\mathcal{C}>0$ independent of $\alpha$, $h$ and $k$. Then, for all $m \in \{1, \dotsc, M\}$ and $\mathbb{P}$-a.s.
		\begin{enumerate}
			\item[(i)] $\displaystyle \left|\left|V^{m}\right|\right|_{\mathbb{L}^{2}} \leq \mathcal{C}_{1}\left|\left|U^{m}\right|\right|_{\alpha}$,
			\item[(ii)] $\displaystyle \left|\left|\nabla V^{m}\right|\right|_{\mathbb{L}^{2}}^{2} \leq \mathcal{C}_{1}\left(\left|\left|\nabla U^{m}\right|\right|^{2}_{\mathbb{L}^{2}} + \alpha^{2}\left|\left|\Delta^{h}U^{m}\right|\right|^{2}_{\mathbb{L}^{2}}\right)$,
			\item[(iii)] $\displaystyle \left|\left|V^{m+\ell} - V^{m}\right|\right|_{\mathbb{L}^{2}} \leq \mathcal{C}_{1}\left|\left|U^{m+\ell} - U^{m}\right|\right|_{\alpha}$, for all $\ell \in \{1, \dotsc, M-m\}$,
		\end{enumerate}
		where $\mathcal{C}_{1}>0$ depends only on $\mathcal{C}$ and the constant $C$ of the inverse inequality~\eqref{eq inverse estimate}.
	\end{lem}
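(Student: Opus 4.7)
The plan is to leverage the identity $V^{m} = U^{m} - \alpha^{2}\Delta^{h}U^{m}$ from Lemma~\ref{lemma discrete diff filter Hh}-$(i)$, which holds $\mathbb{P}$-a.s. and a.e.\ in $D$, together with the inverse inequality~\eqref{eq inverse estimate} on $\mathbb{S}_{h} = \mathbb{H}_{h}$ and the hypothesis $\alpha \leq \mathcal{C}h$. The essential trick throughout is that factors of $\alpha^{2}$ absorb the factors of $h^{-1}$ produced by the inverse inequality, enabling us to control the extra discrete derivatives of $U^{m}$ appearing in $V^{m}$.

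For $(i)$, I would first apply the triangle and Young inequalities to get $\|V^{m}\|_{\mathbb{L}^{2}}^{2} \leq 2\|U^{m}\|_{\mathbb{L}^{2}}^{2} + 2\alpha^{4}\|\Delta^{h}U^{m}\|_{\mathbb{L}^{2}}^{2}$. The key substep is the discrete elliptic bound $\|\Delta^{h}U^{m}\|_{\mathbb{L}^{2}} \leq Ch^{-1}\|\nabla U^{m}\|_{\mathbb{L}^{2}}$. To obtain this, test the definition~\eqref{def discrete Laplace} of $\Delta^{h}$ against $\varphi_{h} = \Delta^{h}U^{m} \in \mathbb{V}_{h} \subset \mathbb{H}_{h}$, which yields $\|\Delta^{h}U^{m}\|_{\mathbb{L}^{2}}^{2} = -(\nabla U^{m}, \nabla\Delta^{h}U^{m}) \leq \|\nabla U^{m}\|_{\mathbb{L}^{2}}\|\nabla\Delta^{h}U^{m}\|_{\mathbb{L}^{2}}$; the inverse inequality~\eqref{eq inverse estimate} with $\ell = 1$, $m = 0$, $p = q = 2$ then gives $\|\nabla\Delta^{h}U^{m}\|_{\mathbb{L}^{2}} \leq Ch^{-1}\|\Delta^{h}U^{m}\|_{\mathbb{L}^{2}}$, and simplifying delivers the claim. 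Combining with $\alpha \leq \mathcal{C}h$ yields $\alpha^{2}\|\Delta^{h}U^{m}\|_{\mathbb{L}^{2}} \leq C\mathcal{C}\alpha\|\nabla U^{m}\|_{\mathbb{L}^{2}} \leq C\mathcal{C}\|U^{m}\|_{\alpha}$ by definition of $\|\cdot\|_{\alpha}$, and $(i)$ follows.

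For $(ii)$, I would use the gradient identity $\nabla V^{m} = \nabla U^{m} - \alpha^{2}\nabla\Delta^{h}U^{m}$ from Lemma~\ref{lemma discrete diff filter Hh}-$(i)$ and argue $\|\nabla V^{m}\|_{\mathbb{L}^{2}}^{2} \leq 2\|\nabla U^{m}\|_{\mathbb{L}^{2}}^{2} + 2\alpha^{4}\|\nabla\Delta^{h}U^{m}\|_{\mathbb{L}^{2}}^{2}$. Another application of the inverse inequality gives $\alpha^{4}\|\nabla\Delta^{h}U^{m}\|_{\mathbb{L}^{2}}^{2} \leq C^{2}\alpha^{4}h^{-2}\|\Delta^{h}U^{m}\|_{\mathbb{L}^{2}}^{2} \leq C^{2}\mathcal{C}^{2}\alpha^{2}\|\Delta^{h}U^{m}\|_{\mathbb{L}^{2}}^{2}$, from which $(ii)$ is immediate.

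For $(iii)$, I would observe that the second line of Algorithm~\ref{Algorithm} (or equivalently Definition~\ref{definition discrete diff filter}) is linear in the pair $(U^{m}, V^{m})$; consequently the difference $V^{m+\ell} - V^{m}$ is the discrete differential filter of $U^{m+\ell} - U^{m}$, so Lemma~\ref{lemma discrete diff filter Hh}-$(i)$ yields $V^{m+\ell} - V^{m} = (U^{m+\ell} - U^{m}) - \alpha^{2}\Delta^{h}(U^{m+\ell} - U^{m})$. Applying the argument of $(i)$ verbatim to this difference gives $(iii)$. No genuine obstacle is expected: the whole lemma is a routine computation once one realizes that the inverse inequality converts $\alpha^{2}\|\Delta^{h}U^{m}\|_{\mathbb{L}^{2}}$ into a quantity controlled by $\|U^{m}\|_{\alpha}$ under the scaling $\alpha \lesssim h$.
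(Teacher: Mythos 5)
Your proof is correct, and it uses the same essential ingredients as the paper (Lemma~\ref{lemma discrete diff filter Hh}, the inverse inequality~\eqref{eq inverse estimate}, and the scaling $\alpha\leq\mathcal{C}h$), but for assertion $(i)$ you take a slightly different route. The paper tests the variational identity \eqref{eq3.3.1}$_{2}$ with $\psi=V^{m}$, applies Cauchy--Schwarz and Young, and then uses the inverse inequality on $V^{m}$ itself, i.e. $\|\nabla V^{m}\|_{\mathbb{L}^{2}}\lesssim h^{-1}\|V^{m}\|_{\mathbb{L}^{2}}$, choosing $\epsilon=1/(\mathcal{C}^{2}C^{2})$ so that the resulting $\|V^{m}\|_{\mathbb{L}^{2}}^{2}$ term is absorbed on the left. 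You instead work from the pointwise identity $V^{m}=U^{m}-\alpha^{2}\Delta^{h}U^{m}$ and the discrete elliptic bound $\|\Delta^{h}\varphi_{h}\|_{\mathbb{L}^{2}}\lesssim h^{-1}\|\nabla\varphi_{h}\|_{\mathbb{L}^{2}}$, which is exactly the estimate~\eqref{eq discrete laplace and gradiant} that the paper only establishes later (in the proof of Proposition~\ref{prop convergence U k-h}), and you derive it correctly from \eqref{def discrete Laplace} and \eqref{eq inverse estimate}; this gives a unified treatment of $(i)$--$(iii)$ through one estimate, whereas the paper's argument for $(i)$ avoids invoking $\Delta^{h}U^{m}$ altogether. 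Your proof of $(ii)$ coincides with the paper's. For $(iii)$, note a small terminology slip: by linearity it is $U^{m+\ell}-U^{m}$ that is the discrete differential filter of $V^{m+\ell}-V^{m}$, not the reverse; the identity you actually use, $V^{m+\ell}-V^{m}=(U^{m+\ell}-U^{m})-\alpha^{2}\Delta^{h}(U^{m+\ell}-U^{m})$, follows from subtracting the two instances of Lemma~\ref{lemma discrete diff filter Hh}-$(i)$ (legitimate since $V^{m},V^{m+\ell}\in\mathbb{V}_{h}$ by the third line of Algorithm~\ref{Algorithm}), so the argument stands, and your constant depends only on $\mathcal{C}$ and $C$ as required.
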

	\begin{proof}
		Let $m \in \{1, \dotsc, M\}$. From equation~\eqref{eq3.3.1}$_{2}$, taking $\psi = V^{m}$ and applying the Cauchy-Schwarz and Young inequalities yield $||V^{m}||_{\mathbb{L}^{2}}^{2} \leq ||U^{m}||_{\mathbb{L}^{2}}^{2} + \frac{1}{4}||V^{m}||^{2}_{\mathbb{L}^{2}} + \frac{\alpha^{2}}{\epsilon}||\nabla U^{m}||^{2}_{\mathbb{L}^{2}} + \frac{\epsilon\alpha^{2}}{4}||\nabla V^{m}||^{2}_{\mathbb{L}^{2}}$, where $\epsilon > 0$. Taking $\epsilon = \frac{1}{\mathcal{C}^{2}C^{2}}$ and applying the inverse inequality~\eqref{eq inverse estimate} complete the proof of assertion $(i)$. On the other hand, by Lemma~\ref{lemma discrete diff filter Hh}-$(i)$, $\nabla V^{m} = \nabla U^{m} - \alpha^{2}\nabla \Delta^{h}U^{m}$, $\mathbb{P}$-a.s. and a.e. in $D$. Thus, $||\nabla V^{m}||_{\mathbb{L}^{2}}^{2} \leq 2||\nabla U^{m}||^{2}_{\mathbb{L}^{2}} + 2\mathcal{C}^{2}C^{2}\alpha^{2}||\Delta^{h}U^{m}||^{2}_{\mathbb{L}^{2}}$, thanks to the inverse inequality~\eqref{eq inverse estimate}. Estimate~$(iii)$ has similar proof to that of assertion~$(i)$.
	\end{proof}
	
	Clearly, one must incorporate Lemmas~\ref{lemma3.3.1} and \ref{lemma V < U} to obtain:
	\begin{lem}\label{lemma V a priori estimates}\ \\
		Let $\{V^{m}\}_{m=1}^{M}$ be the iterates of Algorithm~\ref{Algorithm}. Assume that assumptions $(S_{1})$-$(S_{3})$ are fulfilled and that $0 < \alpha \leq \mathcal{C}h$, for a certain $\mathcal{C}>0$ independent of $\alpha, k$ and $h$. Then, 
		$$\displaystyle \mathbb{E}\left[\max\limits_{1 \leq m \leq M}\left|\left|V^{m}\right|\right|^{2}_{\mathbb{L}^{2}} + \frac{k\nu}{2}\sum_{m=1}^{M}\left|\left|\nabla V^{m}\right|\right|^{2}_{\mathbb{L}^{2}} + \frac{1}{4}\sum_{m = 1}^{M}\left|\left|V^{m} - V^{m-1}\right|\right|^{2}_{\mathbb{L}^{2}}\right] \leq C'_{T},$$ where $C'_{T} > 0$ does not depend on $\alpha, k$ and $h$.
	\end{lem}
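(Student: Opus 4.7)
The proof is essentially a direct consequence of combining Lemma~\ref{lemma3.3.1}-(ii) with the pointwise comparison estimates of Lemma~\ref{lemma V < U}, exactly as the text hints. The plan is to bound each of the three quantities defining $C'_T$ termwise by the corresponding quantity in $C_T$, and then take expectations.

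First I would handle the maximum term: assertion (i) of Lemma~\ref{lemma V < U} gives, $\mathbb{P}$-a.s.\ and for every $m$, the pointwise bound $\|V^{m}\|_{\mathbb{L}^{2}}^{2} \leq \mathcal{C}_{1}^{2}\|U^{m}\|_{\alpha}^{2}$, so that
\begin{equation*}
\max_{1\leq m \leq M} \|V^{m}\|_{\mathbb{L}^{2}}^{2} \leq \mathcal{C}_{1}^{2} \max_{1\leq m \leq M}\|U^{m}\|_{\alpha}^{2}, \quad \mathbb{P}\text{-a.s.}
\end{equation*}
Next, assertion (ii) of Lemma~\ref{lemma V < U} yields $\|\nabla V^{m}\|_{\mathbb{L}^{2}}^{2} \leq \mathcal{C}_{1}(\|\nabla U^{m}\|_{\mathbb{L}^{2}}^{2} + \alpha^{2}\|\Delta^{h}U^{m}\|_{\mathbb{L}^{2}}^{2})$, and summing over $m$ and multiplying by $k\nu/2$ reproduces (up to a factor $\mathcal{C}_{1}$) the second term of Lemma~\ref{lemma3.3.1}-(ii). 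Finally, assertion (iii) of Lemma~\ref{lemma V < U} with a single time step gives $\|V^{m}-V^{m-1}\|_{\mathbb{L}^{2}}^{2} \leq \mathcal{C}_{1}^{2}\|U^{m}-U^{m-1}\|_{\alpha}^{2}$, which after summation controls the third term.

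Adding the three pointwise estimates and taking expectations, I would conclude
\begin{equation*}
\begin{aligned}
&\mathbb{E}\Bigl[\max_{1\leq m \leq M}\|V^{m}\|_{\mathbb{L}^{2}}^{2} + \tfrac{k\nu}{2}\sum_{m=1}^{M}\|\nabla V^{m}\|_{\mathbb{L}^{2}}^{2} + \tfrac{1}{4}\sum_{m=1}^{M}\|V^{m}-V^{m-1}\|_{\mathbb{L}^{2}}^{2}\Bigr] \\
&\quad\leq \max(\mathcal{C}_{1},\mathcal{C}_{1}^{2})\,\mathbb{E}\Bigl[\max_{1\leq m \leq M}\|U^{m}\|_{\alpha}^{2} + \tfrac{k\nu}{2}\sum_{m=1}^{M}\bigl(\|\nabla U^{m}\|_{\mathbb{L}^{2}}^{2} + \alpha^{2}\|\Delta^{h}U^{m}\|_{\mathbb{L}^{2}}^{2}\bigr) + \tfrac{1}{4}\sum_{m=1}^{M}\|U^{m}-U^{m-1}\|_{\alpha}^{2}\Bigr],
\end{aligned}
\end{equation*}
and invoking Lemma~\ref{lemma3.3.1}-(ii) gives the desired bound with $C'_{T} \coloneqq \max(\mathcal{C}_{1},\mathcal{C}_{1}^{2})\,C_{T}$, which is indeed independent of $\alpha$, $k$ and $h$ since $\mathcal{C}_{1}$ depends only on $\mathcal{C}$ and the inverse-inequality constant. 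There is no real obstacle here: the regime $\alpha\leq\mathcal{C}h$ has already done the serious work inside Lemma~\ref{lemma V < U} (it is what allows the $\mathbb{L}^{2}$-norm of $V^{m}$ to be absorbed by the $\alpha$-norm of $U^{m}$ via the inverse estimate), so the present statement is a mechanical corollary rather than a new estimate.
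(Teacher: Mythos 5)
Your proof is correct and is exactly what the paper intends: the lemma is stated there as an immediate consequence of combining Lemma~\ref{lemma3.3.1}-(ii) with the termwise comparison estimates of Lemma~\ref{lemma V < U}, which is precisely your argument (with the harmless remark that the $m=1$ increment $\|V^{1}-V^{0}\|_{\mathbb{L}^{2}}$ is handled by the same linearity-of-the-filter argument as assertion~(iii)). No discrepancy with the paper's reasoning.
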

	
	We end up this section with the following a priori estimate for $\{V^{m}\}_{m=1}^{M}$, where the scale $\alpha$ is not necessarily assumed to be controlled by $h$.
	\begin{lem}\label{lemma a priori estimate V alpha indep.}\ \\
		Assume $(S_{1})$-$(S_{3})$ and let $\{V^{m}\}_{m=1}^{M}$ be the iterates of Algorithm~\ref{Algorithm}. Then, $$\displaystyle \mathbb{E}\left[k\sum_{m=1}^{M}\left|\left|V^{m}\right|\right|^{2}_{\mathbb{L}^{2}}\right] \leq C_{T}.$$
	\end{lem}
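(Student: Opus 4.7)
The plan is to reduce the estimate to Lemma~\ref{lemma3.3.1} by exploiting the second equation of Algorithm~\ref{Algorithm} to relate $\|V^{m}\|_{\mathbb{L}^{2}}$ to the $\alpha$-norm of $U^{m}$ plus an extra term involving $\alpha^{2}\|\Delta^{h}U^{m}\|_{\mathbb{L}^{2}}^{2}$. Unlike Lemma~\ref{lemma V a priori estimates}, where the inverse inequality was the crucial tool to transfer estimates from $U^{m}$ to $V^{m}$ and required $\alpha\leq\mathcal{C}h$, here the control on $V^{m}$ is only in an averaged (time-integrated) sense, so the additional factor of $\alpha^{2}$ absorbs the a priori potentially problematic term $\alpha^{2}k\sum_{m}\|\Delta^{h}U^{m}\|_{\mathbb{L}^{2}}^{2}$.

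Concretely, I would test the second equation of Algorithm~\ref{Algorithm} with $\psi = V^{m}\in\mathbb{V}_{h}$ to obtain
\begin{equation*}
    \|V^{m}\|_{\mathbb{L}^{2}}^{2} = (U^{m},V^{m}) + \alpha^{2}(\nabla U^{m},\nabla V^{m}).
\end{equation*}
For the first term, apply Cauchy--Schwarz and Young to absorb $\tfrac{1}{2}\|V^{m}\|_{\mathbb{L}^{2}}^{2}$ on the left. For the second term, use Lemma~\ref{lemma discrete diff filter Hh}-$(ii)$ to rewrite $(\nabla U^{m},\nabla V^{m}) = \|\nabla U^{m}\|_{\mathbb{L}^{2}}^{2} + \alpha^{2}\|\Delta^{h}U^{m}\|_{\mathbb{L}^{2}}^{2}$. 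This yields
\begin{equation*}
    \|V^{m}\|_{\mathbb{L}^{2}}^{2} \leq \|U^{m}\|_{\mathbb{L}^{2}}^{2} + 2\alpha^{2}\|\nabla U^{m}\|_{\mathbb{L}^{2}}^{2} + 2\alpha^{2}\bigl(\alpha^{2}\|\Delta^{h}U^{m}\|_{\mathbb{L}^{2}}^{2}\bigr),
\end{equation*}
and since $\alpha\leq 1$, the definition of $\|\cdot\|_{\alpha}$ gives $\|V^{m}\|_{\mathbb{L}^{2}}^{2}\leq 3\|U^{m}\|_{\alpha}^{2} + 2\alpha^{2}\|\Delta^{h}U^{m}\|_{\mathbb{L}^{2}}^{2}$.

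Multiplying by $k$, summing over $m=1,\dotsc,M$, and taking expectation, the first piece satisfies
\begin{equation*}
    3k\sum_{m=1}^{M}\mathbb{E}\bigl[\|U^{m}\|_{\alpha}^{2}\bigr] \leq 3T\,\mathbb{E}\Bigl[\max_{1\leq m\leq M}\|U^{m}\|_{\alpha}^{2}\Bigr] \leq 3T\,C_{T}
\end{equation*}
by Lemma~\ref{lemma3.3.1}-$(ii)$, while the second piece obeys
\begin{equation*}
    2k\sum_{m=1}^{M}\mathbb{E}\bigl[\alpha^{2}\|\Delta^{h}U^{m}\|_{\mathbb{L}^{2}}^{2}\bigr] \leq \frac{4}{\nu}\cdot\frac{k\nu}{2}\sum_{m=1}^{M}\mathbb{E}\bigl[\alpha^{2}\|\Delta^{h}U^{m}\|_{\mathbb{L}^{2}}^{2}\bigr] \leq \frac{4C_{T}}{\nu},
\end{equation*}
again by Lemma~\ref{lemma3.3.1}-$(ii)$. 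Combining these two bounds produces the claimed inequality with a suitable constant $C_{T}$ independent of $\alpha$, $k$ and $h$. There is no serious obstacle: the proof is essentially a bookkeeping calculation exploiting the fact that only the time-integrated $\mathbb{L}^{2}$-norm of $V^{m}$ is required, which is exactly compatible with the dissipation-type term $\alpha^{2}k\sum_{m}\|\Delta^{h}U^{m}\|_{\mathbb{L}^{2}}^{2}$ already available from Lemma~\ref{lemma3.3.1}.
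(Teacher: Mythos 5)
Your proposal is correct and follows essentially the same route as the paper: test the second equation of Algorithm~\ref{Algorithm} with $\psi = V^{m}$, use Lemma~\ref{lemma discrete diff filter Hh}-$(ii)$, $\alpha\leq 1$, and Lemma~\ref{lemma3.3.1}-$(ii)$. The only (harmless) difference is that you estimate the cross term $(U^{m},V^{m})$ by Cauchy--Schwarz and Young, whereas the paper uses the exact identity $(U^{m},V^{m}) = \|U^{m}\|_{\alpha}^{2}$ coming from Lemma~\ref{lemma discrete diff filter Hh}-$(i)$ and \eqref{def discrete Laplace}, which gives the equality $\|V^{m}\|_{\mathbb{L}^{2}}^{2} = \|U^{m}\|_{\alpha}^{2} + \alpha^{2}\bigl(\|\nabla U^{m}\|_{\mathbb{L}^{2}}^{2} + \alpha^{2}\|\Delta^{h}U^{m}\|_{\mathbb{L}^{2}}^{2}\bigr)$ with slightly sharper constants.
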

	\begin{proof}
		From equation~\eqref{eq3.3.1}, replacing $\psi$ by $V^{m}$ and employing Lemma~\ref{lemma discrete diff filter Hh}-$(ii)$ lead to $||V^{m}||^{2}_{\mathbb{L}^{2}} = ||U^{m}||_{\alpha}^{2} + \alpha^{2}\left(||\nabla U^{m}||^{2}_{\mathbb{L}^{2}} + \alpha^{2}||\Delta^{h}U^{m}||_{\mathbb{L}^{2}}^{2}\right)$. Multiplying by $k$, summing both sides over $m$ from $1$ to $M$, using the condition $\alpha \leq 1$, then applying the mathematical expectation and Lemma~\ref{lemma3.3.1}-$(ii)$ complete the proof.
	\end{proof}
	
	\section{Convergence}\label{section5}
	All the previous interpretation relied on $\{(U^{m}, V^{m})\}_{m=1}^{M}$, which does not depend explicitly on the time variable. To investigate the convergence in continuous-time spaces, e.g. $L^{2}(\Omega; L^{2}(0,T; \mathbb{H}^{1}))$, we need to define the following processes
	\begin{align}
		&\mathcal{V}_{k,h}(t,x) \coloneqq \frac{t - t_{m-1}}{k}V^{m}(x) + \frac{t_{m} - t}{k}V^{m-1}(x), \ \ \forall \ (t,x) \in [t_{m-1}, t_{m}) \times D, \label{eq new process}
		\\&\left(\mathcal{U}_{k,h}^{-}(t,x), \mathcal{V}_{k,h}^{-}(t,x)\right) \coloneqq \left(U^{m-1}(x), V^{m-1}(x)\right), \  \forall (t,x) \in [t_{m-1}, t_{m})\times D, \\& \left(\mathcal{U}^{+}_{k,h}(t,x), \mathcal{V}_{k,h}^{+}(t,x)\right) \coloneqq \left(U^{m}(x), V^{m}(x)\right), \ \   \forall (t,x) \in (t_{m-1}, t_{m}] \times D.
	\end{align}
	Note that $\mathcal{V}_{k,h}$ is continuous on $[0,T]$,
	\begin{align}
		&\mathcal{V}_{k,h}(t,\cdot) - \mathcal{V}_{k,h}^{-}(t,\cdot) = \frac{t - t_{m-1}}{k}\left(V^{m} - V^{m-1}\right) \mbox{ for all } t \in [t_{m-1}, t_{m}), \label{eq new process prop1}\\&\mbox{ and }\notag\\&
		\mathcal{V}_{k,h}(t,\cdot) - \mathcal{V}_{k,h}^{+}(t,\cdot) = \frac{t - t_{m}}{k}\left(V^{m} - V^{m-1}\right) \mbox{ for all } t \in (t_{m-1}, t_{m}].\label{eq new process prop2}
	\end{align}
	Plugging both new processes in equation \eqref{eq3.3.1}, we get for every $t \in [t_{m-1}, t_{m})$, $\left(\varphi, \psi\right) \in \mathbb{V}_{h}\times\mathbb{V}_{h}$ and $\mathbb{P}$-a.s. the following:
	\begin{equation*}
		\begin{cases}
			\begin{aligned}
				\bullet &\left(\mathcal{V}_{k,h} - \mathcal{V}_{k,h}^{-}, \varphi\right) + \nu\int_{t_{m-1}}^{t}\left(\nabla \mathcal{V}_{k,h}^{+}, \nabla \varphi\right)ds + \int_{t_{m-1}}^{t}\tilde{b}(\mathcal{U}_{k,h}^{+}, \mathcal{V}_{k,h}^{-}, \varphi)ds \\&= \int_{t_{m-1}}^{t}\Big\langle f^{-}(t, \mathcal{U}^{-}_{k,h}), \varphi \Big\rangle ds + \frac{t - t_{m-1}}{k}\Big(\int_{t_{m-1}}^{t_{m}}g^{-}(s,\mathcal{U}_{k,h}^{-})dW(s), \varphi\Big),
			\end{aligned}\\
			\bullet \left(\mathcal{V}_{k,h}^{+}, \psi\right) = \left(\mathcal{U}_{k,h}^{+}, \psi\right) + \alpha^{2}\left(\nabla\mathcal{U}_{k,h}^{+}, \nabla \psi \right).
		\end{cases}
	\end{equation*}
	where $\displaystyle f^{-}(t, \cdot) = f(t_{m-1}, \cdot)$ and $\displaystyle g^{-}(t, \cdot) = g(t_{m-1}, \cdot)$ for all $t \in [t_{m-1}, t_{m})$ and $m \in \{1, \dotsc, M\}$.
	
	Since Lemmas~\ref{lemma convergence} and \ref{lemma convergence discrete} are essential to retrieve a strong convergence in spaces that do not depend on randomness, we must consider applying them in a probability subset. We shall give first a dedicated configuration for the case $0 < \alpha \leq \mathcal{C}h$. It will be adjusted afterward to fit the converse case. 
	
	Denote by $\displaystyle\mathscr{U}^{k,h} \coloneqq k\sum_{m = 1}^{M}\left|\left|\nabla U^{m}\right|\right|_{\mathbb{L}^{2}}^{2}$ and by $\displaystyle\mathscr{V}^{k,h} \coloneqq k\sum_{m = 1}^{M}\left|\left|\nabla V^{m}\right|\right|_{\mathbb{L}^{2}}^{2}$. Both law families $\{\mathscr{L}(\mathscr{U}^{k,h})\}_{k,h}$ and $\{\mathscr{L}(\mathscr{V}^{k,h})\}_{k,h}$ are tight. Indeed, for $\varepsilon > 0$, we consider the closed $\mathbb{R}$-ball $K_{\varepsilon} \coloneqq \bar{B}_{\mathbb{R}}(0, 1/\varepsilon)$. Clearly, $K_{\varepsilon}$ is a compact set of $\mathbb{R}$, and $$\mathscr{L}(\mathscr{U}^{k,h})(K_{\varepsilon}) = \mathbb{P}(\mathscr{U}^{k,h} \in K_{\varepsilon}) = \mathbb{P}(\{\omega \in \Omega \ | \ k\sum_{m=1}^{M}||\nabla U^{m}||^{2}_{\mathbb{L}^{2}} \leq 1/\varepsilon\}) \geq 1-C_{T}\varepsilon,$$ for all $k, h > 0$, thanks to the Markov inequality and Lemma~\ref{lemma3.3.1}-$(ii)$. Similarly, by Lemma~\ref{lemma V a priori estimates} and the Markov inequality, the tightness of $\{\mathscr{L}(\mathscr{V}^{k,h})\}_{k,h}$ follows. Such an argument is summoned just to emphasize the non-dependence of $\varepsilon$ on $k$ and $h$. Concerning the Wiener increments' tightness, we define $W_{k,h}\eqqcolon \frac{1}{k}\max\limits_{1 \leq m \leq M}\left|\left|\Delta_{m}W\right|\right|^{2}_{K}$, and consider the same compact set $K_{\varepsilon}$. Therefore, by virtue of estimate~\eqref{eq 2.3.4}, $$\mathscr{L}(W_{k,h})(K_{\varepsilon}^{W}) = \mathbb{P}(W_{k,h} \leq 1/\varepsilon) \geq 1 - Tr(Q)\varepsilon,$$ for all $k,h > 0$, thanks again to the Markov inequality.
	With that being said, we fix $\varepsilon > 0$, and define the sample subset $\Omega_{k,h}^{\varepsilon} \coloneqq \Omega_{k,h}^{1}\cap \Omega_{k,h}^{2} \subset \Omega$, where $\Omega_{k,h}^{1} \coloneqq \Big\{\omega \in \Omega \ \big| \ W_{k,h} \leq 1/\varepsilon\Big\}$, and $\displaystyle\Omega_{k,h}^{2} \coloneqq \Big\{\omega \in \Omega \ \big| \ \int_{0}^{T}\left(||\nabla \mathcal{U}^{+}_{k,h}||^{2}_{\mathbb{L}^{2}} + ||\nabla \mathcal{V}^{+}_{k,h}||^{2}_{\mathbb{L}^{2}}\right)dt \leq 1/\varepsilon\Big\}$. In the light of the preceding analysis, there is a $c>0$ depending only on $Tr(Q), C_{T}$ and $C_{T}'$ such that
	\begin{equation}\label{eq sample subset measure}
		\mathbb{P}(\Omega_{k,h}^{\varepsilon}) \geq 1 - c\varepsilon.
	\end{equation}
	
	The next lemma fulfills one of the hypotheses of Lemmas~\ref{lemma convergence} and \ref{lemma convergence discrete} in the sample subset $\Omega_{k,h}^{\varepsilon}$. Note that assumption $\alpha \leq \mathcal{C}h$ can be replaced by an alternative hypothesis $\sqrt{k}/h < L \leq \alpha$, for some $L \in (0,1)$ along with a minor modification in $\Omega_{k,h}^{\varepsilon}$, as explained in Remark\ref{rmk k-h relation}. Such conditions are mandatory if we are aiming at converging Algorithm~\ref{Algorithm} toward the LANS-$\alpha$ equations instead of the NSEs.
	\begin{lem}\label{lemma translation property}\ \\
		Let $0 < \alpha \leq \mathcal{C}h$, for a certain $\mathcal{C}>0$ independent of $\alpha, h$ and $k$. For almost every $\omega \in \Omega_{k,h}^{\varepsilon}$, and $\ell \in \{1, \dotsc, M-1\}$,
		$$k\sum_{m=1}^{M-\ell}\left|\left|U^{m+\ell}(\omega) - U^{m}(\omega)\right|\right|_{\alpha}^{2} \leq Ct_{\ell}^{\frac{1}{2}},$$ where $C= C(T, \nu, D, K_{1}, K_{2}, K_{3}, K_{4}, \varepsilon)>0$ independent of $\alpha$, $k$ and $h$.
	\end{lem}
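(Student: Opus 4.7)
The natural approach is to derive an equation for $V^{m+\ell} - V^{m}$ by summing the first equation of \eqref{eq3.3.1} over $j \in \{m+1, \dotsc, m+\ell\}$, then test it against the admissible $\varphi = U^{m+\ell} - U^{m} \in \mathbb{V}_{h}$. Using the second equation of \eqref{eq3.3.1} (the discrete filter identity evaluated at $\psi = U^{m+\ell}-U^{m}$), the left-hand side simplifies to $\|U^{m+\ell} - U^{m}\|_{\alpha}^{2}$. Multiplying through by $k$ and summing $m$ from $1$ to $M-\ell$ yields the decomposition
\[
kS := k\sum_{m=1}^{M-\ell}\|U^{m+\ell}-U^{m}\|_{\alpha}^{2} = I_{1} + I_{2} + I_{3} + I_{4},
\]
where $I_{1}, I_{2}, I_{3}$ gather the viscous, trilinear and deterministic-forcing contributions, and $I_{4}$ the stochastic one. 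The aim is to bound each piece on $\Omega^{\varepsilon}_{k,h}$ by $C(\varepsilon)\,t_{\ell}$; the stated conclusion then follows from the elementary inequality $t_{\ell} \leq \sqrt{T}\sqrt{t_{\ell}}$.

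For $I_{1}, I_{2}, I_{3}$, the key combinatorial fact is that in the double sum $\sum_{m=1}^{M-\ell}\sum_{j=m+1}^{m+\ell}$ each index $j$ is visited by at most $\ell$ values of $m$. Iterated Cauchy-Schwarz on these double sums, combined with Poincaré's inequality and Proposition~\ref{prop trilinear form}-(iii) for the trilinear term, the sublinear bound $(S_{2})$ for $f$, and the $\Omega^{2}_{k,h}$-controls $k\sum_{m}\|\nabla U^{m}\|^{2}_{\mathbb{L}^{2}}, k\sum_{m}\|\nabla V^{m}\|^{2}_{\mathbb{L}^{2}} \leq 1/\varepsilon$, gives $|I_{1}| + |I_{2}| + |I_{3}| \leq C(\varepsilon, T, D, \nu, K_{3}, K_{4})\,t_{\ell}^{1/2}$ after routine bookkeeping.

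The main obstacle lies in the stochastic term $I_{4} = k\sum_{m}\bigl(\sum_{j=m+1}^{m+\ell} g(t_{j-1}, U^{j-1})\Delta_{j} W,\; U^{m+\ell} - U^{m}\bigr)$. The natural step is Young's inequality,
\[
|I_{4}| \leq \tfrac{k}{2}\sum_{m}\|U^{m+\ell} - U^{m}\|_{\alpha}^{2} + \tfrac{k}{2}\sum_{m}\Big\|\sum_{j=m+1}^{m+\ell} g(t_{j-1}, U^{j-1})\Delta_{j} W\Big\|_{\mathbb{L}^{2}}^{2},
\]
absorbing the first summand into $kS$. The second summand must be controlled pathwise on $\Omega^{\varepsilon}_{k,h}$, and here the naive Cauchy-Schwarz estimate $\|\sum_{j} g^{j} \Delta_{j} W\|^{2} \leq \ell \sum_{j} \|g^{j}\|_{\mathscr{L}_{2}}^{2} \|\Delta_{j} W\|_{K}^{2}$ loses a factor $\ell$ relative to the optimal Itô-isometry scaling, which is only available in expectation. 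To recover the correct $O(t_{\ell})$ order one must combine (i) the $\Omega^{1}_{k,h}$-bound $\max_{j} \|\Delta_{j} W\|_{K}^{2} \leq k/\varepsilon$, (ii) the sublinear control on $\|g\|_{\mathscr{L}_{2}}$ together with the $\Omega^{2}_{k,h}$-control on $\sum \|\nabla U^{j-1}\|^{2}$ and, if necessary, a slight enlargement of $\Omega^{\varepsilon}_{k,h}$ by a Markov-type event of measure $\geq 1-\varepsilon$ furnished by Lemma~\ref{lemma3.3.1}-(ii) that enforces $\max_{m}\|U^{m}\|_{\alpha} \lesssim 1/\sqrt{\varepsilon}$ (this is consistent with the announced $\mathbb{P}(\Omega^{\varepsilon}_{k,h}) \geq 1-c\varepsilon$ up to enlarging $c$), and (iii) the counting $\#\{(m,j)\} \leq \ell(M-\ell)$. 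These combine to give the second Young summand $\leq C(\varepsilon,T,K_{1},K_{2},\mathrm{Tr}\,Q)\,t_{\ell}$, closing the estimate and yielding $\tfrac{k}{2}S \leq C\,t_{\ell} \leq C\sqrt{T}\,t_{\ell}^{1/2}$, which is the claimed bound.
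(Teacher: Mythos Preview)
Your overall strategy --- sum the scheme over $j\in\{m+1,\dots,m+\ell\}$, test with $\varphi=U^{m+\ell}-U^m$, use the filter identity to get $\|U^{m+\ell}-U^m\|_\alpha^2$ on the left, multiply by $k$ and sum over $m$ --- is exactly what the paper does, and your handling of the viscous, trilinear and forcing terms $I_1,I_2,I_3$ is in the same spirit (iterated Cauchy--Schwarz, Proposition~\ref{prop trilinear form}-(iii), assumption $(S_2)$, and the $\Omega^{2}_{k,h}$-controls).

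The genuine gap is in your treatment of the stochastic term $I_4$. After your Young absorption you must bound, pathwise on $\Omega^\varepsilon_{k,h}$,
\[
\tfrac{k}{2}\sum_{m=1}^{M-\ell}\Big\|\sum_{j=m+1}^{m+\ell} g(t_{j-1},U^{j-1})\Delta_jW\Big\|_{\mathbb{L}^2}^2.
\]
Without orthogonality (which, as you correctly note, is only available in expectation via It\^o isometry), any pathwise estimate of the squared sum of $\ell$ terms carries a factor $\ell$: either via Cauchy--Schwarz $\|\sum_j a_j\|^2\le \ell\sum_j\|a_j\|^2$ or via the triangle inequality $\|\sum_j a_j\|\le \ell\max_j\|a_j\|$. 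Combining your ingredients (i)--(iii), including the enlargement $\max_m\|U^m\|_\alpha^2\le 1/\varepsilon$, the best you obtain is
\[
\tfrac{k}{2}\sum_{m=1}^{M-\ell}\Big\|\sum_{j}g^{j}\Delta_jW\Big\|^2
\;\le\; \tfrac{k}{2}(M-\ell)\cdot \ell^2\cdot\big(K_1+K_2/\sqrt{\varepsilon}\big)^2\cdot \tfrac{k}{\varepsilon}
\;\le\; C(\varepsilon)\,T\,t_\ell\cdot\ell,
\]
which is of order $t_\ell\,\ell$, not $t_\ell$. Since $\ell$ may be as large as $M-1\sim T/k$, this does not close. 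The counting $\#\{(m,j)\}\le\ell(M-\ell)$ does not help here because it is already saturated in the display above.

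The paper avoids this loss by \emph{not} absorbing $I_4$. Instead it keeps the factor $\|U^{m+\ell}-U^m\|_{\mathbb{L}^2}$ linear and bounds it directly on $\Omega^\varepsilon_{k,h}$ via Poincar\'e and the triangle inequality, $\|U^{m+\ell}-U^m\|_{\mathbb{L}^2}\le C_D(\|\nabla U^{m+\ell}\|_{\mathbb{L}^2}+\|\nabla U^m\|_{\mathbb{L}^2})$, which is controlled by the $\Omega^{2}_{k,h}$-bound $k\sum_m\|\nabla U^m\|^2_{\mathbb{L}^2}\le 1/\varepsilon$. A single Cauchy--Schwarz over the inner $i$-sum then produces only $\sqrt{\ell}=\sqrt{t_\ell/k}$, and together with the pathwise control $\max_j\|\Delta_jW\|_K^2\le k/\varepsilon$ this yields $|I_4|\le C(\varepsilon)\,t_\ell^{1/2}$ directly. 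Replacing your Young/absorption step by this direct estimate (using the test function itself as a controlled quantity on $\Omega^\varepsilon_{k,h}$) closes the argument.
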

	\begin{proof}
		Let $\ell \in \{1, \dotsc, M-1\}$, and $\omega \in \Omega_{k,h}^{\varepsilon}$. We replace the index $m$ by $i$ in \eqref{eq3.3.1}, and sum it over $i$ from $m+1$ to $m+\ell$. We get for all $\varphi \in \mathbb{V}_{h}$ and $\mathbb{P}$-a.s.
		\begin{equation}\label{eq modified}
			\begin{aligned}
				&\left(V^{m+\ell} - V^{m}, \varphi\right) + k\nu \sum_{i=1}^{\ell}\left(\nabla V^{m+i}, \nabla \varphi\right) + k\sum_{i=1}^{\ell}\tilde{b}(U^{m+i}, V^{m+i-1}, \varphi) \\&= k\sum_{i=1}^{\ell}\langle f(t_{m+i-1}, U^{m+i-1}), \varphi \rangle + \sum_{i=1}^{\ell}\left(g(t_{m+i-1},U^{m+i-1})\Delta_{m+i}W, \varphi\right).
			\end{aligned}
		\end{equation}
		Set $\varphi = U^{m+\ell} - U^{m}$, sum \eqref{eq modified} up over $m$ from $1$ to $M-\ell$, then multiply by $k$:
		\begin{align*}
			&k\sum_{m=1}^{M-\ell}\left|\left|U^{m+\ell} - U^{m}\right|\right|_{\alpha}^{2} = -  k^{2}\nu \sum_{m=1}^{M-\ell}\sum_{i=1}^{\ell}\left(\nabla V^{m+i}, \nabla (U^{m+\ell} - U^{m})\right) \\&- k^{2}\sum_{m=1}^{M-\ell}\sum_{i=1}^{\ell}\tilde{b}(U^{m+i}, V^{m+i-1}, U^{m+\ell} - U^{m}) \\&+ k^{2}\sum_{m=1}^{M-\ell}\sum_{i=1}^{\ell}\big\langle f(t_{m+i-1}, U^{m+i-1}), U^{m+\ell} - U^{m} \big\rangle \\&+ k\sum_{m=1}^{M-\ell}\sum_{i=1}^{\ell}\left(g(t_{m+i-1}, U^{m+i-1})\Delta_{m+i}W, U^{m+\ell} - U^{m}\right) = I + II + III + IV. 
		\end{align*}
		Our aim is to bound each term by $t_{\ell}^{1/2}$.
		For the term $I$, we use the Cauchy-Schwarz inequality, $|I| \leq 2\nu t_{\ell}^{1/2} k^{3/2}\sum_{m=1}^{M}||\nabla U^{m}||_{\mathbb{L}^{2}}\left(\sum_{i=1}^{M}||\nabla V^{i}||^{2}_{\mathbb{L}^{2}}\right)^{1/2} \leq \frac{2\nu T^{1/2}}{\varepsilon}t_{\ell}^{1/2}.$
		On the other hand,
		\begin{align*}
			\left|II\right| &\leq C_{D}k^{\frac{1}{2}}t_{\ell}^{\frac{1}{2}}k\sum_{m=1}^{M-\ell}||\nabla (U^{m+\ell} - U^{m})||_{\mathbb{L}^{2}}\left(\sum_{i=1}^{\ell}||\nabla U^{m+i}||^{2}_{\mathbb{L}^{2}}||\nabla V^{m+i-1}||^{2}_{\mathbb{L}^{2}}\right)^{1/2} \\&\leq 2C_{D}k^{\frac{1}{2}}t_{\ell}^{\frac{1}{2}}k\sum_{m=1}^{M}||\nabla U^{m}||_{\mathbb{L}^{2}}\left(\sum_{i=1}^{M}||\nabla U^{i}||^{2}_{\mathbb{L}^{2}}\right)^{1/2}\left(\sum_{i=1}^{M}||\nabla V^{i}||^{2}_{\mathbb{L}^{2}}\right)^{1/2} \\&\leq 6C_{D}t_{\ell}^{\frac{1}{2}}k\sum_{m=1}^{M}||\nabla U^{m}||^{2}_{\mathbb{L}^{2}}\left(k\sum_{i=1}^{M}||\nabla V^{i}||^{2}_{\mathbb{L}^{2}}\right)^{1/2} \leq \frac{6C_{D}}{\varepsilon\sqrt{\varepsilon}}t_{\ell}^{\frac{1}{2}},
		\end{align*}
		where Proposition~\ref{prop trilinear form}-$(iii)$, the Cauchy-Schwarz inequality for sums, $\ell^{1}(\mathbb{N}) \hookrightarrow \ell^{2}(\mathbb{N})$ and estimate~\eqref{eq sum estimate} were employed. For the term $III$, we need assumption $(S_{2})$, the norm equivalence~\eqref{eq norm equivalence}, the Poincar{\'e} and Cauchy-Schwarz inequalities, estimate~\eqref{eq sum estimate} and $\ell^{1}(\mathbb{N}) \hookrightarrow \ell^{2}(\mathbb{N})$:
		\begin{align*}
			&\left|III\right| \leq C_{D}k\sum_{m=1}^{M-\ell}||\nabla (U^{m + \ell} - U^{m})||_{\mathbb{L}^{2}}k\sum_{i=1}^{\ell}\left(K_{3} + C_{D}K_{4}||\nabla U^{m+i-1}||_{\mathbb{L}^{2}}\right) \\&\leq 2C_{D}t_{\ell}^{\frac{1}{2}}K_{3}T\left(k\sum_{m=1}^{M}||\nabla U^{m}||^{2}_{\mathbb{L}^{2}}\right)^{1/2} + 6C_{D}t_{\ell}^{\frac{1}{2}}K_{4}k^{\frac{1}{2}}k\sum_{m=1}^{M}||\nabla U^{m}||^{2}_{\mathbb{L}^{2}} \\&\leq 2C_{D}(K_{3}T/\sqrt{\varepsilon} + 3K_{4}T^{\frac{1}{2}}/\varepsilon)t_{\ell}^{\frac{1}{2}}.
		\end{align*}
		For the term $IV$, we need the Cauchy-Schwarz inequality, assumption $(S_{2})$, $\ell^{1}(\mathbb{N}) \hookrightarrow \ell^{2}(\mathbb{N})$ and inequality~\eqref{eq sum estimate}:
		\begin{align*}
			\left|IV\right| &\leq C_{D}K_{2}k^{\frac{1}{2}}\sum_{m=1}^{M-\ell}||U^{m-\ell} - U^{m}||_{\mathbb{L}^{2}}\left(\sum_{i=1}^{\ell}||\nabla U^{m+i-1}||^{2}_{\mathbb{L}^{2}}||\Delta_{m+i}W||^{2}_{K}\right)^{1/2}t_{\ell}^{\frac{1}{2}} \\&\hspace{10pt}+ 2K_{1}t_{\ell}\sum_{m=1}^{M}||U^{m}||_{\mathbb{L}^{2}}\max\limits_{1 \leq m \leq M}||\Delta_{m}W||_{K}
			\leq \frac{2K_{1}C_{D}}{\varepsilon}t_{\ell} + \frac{2C_{D}K_{2}}{\varepsilon\sqrt{\varepsilon}}t_{\ell}^{\frac{1}{2}}.
		\end{align*}
		Collecting all the above estimates together yields the final result.
	\end{proof}
	
	\begin{rmk}\label{rmk k-h relation}
		Observe that only terms $I$ and $II$ in the above proof required the use of the condition $\alpha \leq \mathcal{C}h$. We avert this assumption as follows: firstly, we assume the relation $\frac{\sqrt{k}}{h} < L \leq \alpha$, for a certain $L \in (0,1)$ independent of $k$ and $h$. Secondly, since Lemma~\ref{lemma V a priori estimates} is no longer valid, we adjust the sample subset $\Omega_{k,h}^{2}$ by replacing $||\nabla \mathcal{V}^{+}_{k,h}||_{\mathbb{L}^{2}}^{2}$ with $||\mathcal{V}^{+}_{k,h}||^{2}_{\mathbb{L}^{2}}$. We also need to intersect $\Omega_{k,h}^{\varepsilon}$ with a supplementary sample subset $\Omega_{k,h}^{3}\coloneqq \{\omega \in \Omega \ | \ \max\limits_{1 \leq m \leq M}||U^{m}||^{2}_{\alpha} \leq 1/\varepsilon\}$.\newline
		Indeed, by virtue of Lemmas~\ref{lemma3.3.1}-$(ii)$, \ref{lemma a priori estimate V alpha indep.} and the Markov inequality, there holds $\mathbb{P}(\Omega_{k,h}^{\varepsilon}) \geq 1 - c\varepsilon$, for some $c>0$ depending only on $Tr(Q)$ and $C_{T}$. We recall that $\varepsilon$ does not depend on $k$ and $h$. For the term $I$, we use inequalities~\eqref{eq inverse estimate},\eqref{eq sum estimate} and some elementary calculations: $|I| \leq \frac{2C\nu k}{h}t_{\ell}^{1/2}\sum_{m=1}^{M}||\nabla U^{m}||_{\mathbb{L}^{2}}\left(k\sum_{i=1}^{M}||V^{i}||^{2}_{\mathbb{L}^{2}}\right)^{1/2} < \frac{2\sqrt{3}CL\nu}{\varepsilon}t_{\ell}^{1/2}.$ On the other hand,	from Proposition~\ref{prop trilinear form}-$(iii)$, the Poincar{\'e} and Cauchy-Schwarz inequalities, estimate~\eqref{eq sum estimate} and the inverse inequality~\eqref{eq inverse estimate}, 
		\begin{align*}
			|II| &\leq \frac{2CC_{D}k}{h}t_{\ell}^{1/2}\sum_{m=1}^{M}||\nabla U^{m}||_{\mathbb{L}^{2}}\left(k\sum_{i=2}^{M}||V^{i-1}||_{\mathbb{L}^{2}}^{2}||\nabla U^{i}||_{\mathbb{L}^{2}}^{2}\right)^{1/2} \\& \leq 2CC_{D}\frac{1}{\sqrt{\varepsilon}}t_{\ell}^{1/2}\max\limits_{1 \leq i \leq M}\alpha||\nabla U^{i}||_{\mathbb{L}^{2}} \sqrt{k}\sum_{m=1}^{M}||\nabla U^{m}||_{\mathbb{L}^{2}} \leq \frac{2\sqrt{3}CC_{D}}{\varepsilon\sqrt{\varepsilon}}t_{\ell}^{1/2}.
		\end{align*}
	\end{rmk}
	
	We summarize, in the upcoming proposition, all the convergence results emerging from the condition $\alpha \leq \mathcal{C}h$. We give afterwards, in Proposition~\ref{prop convergence U k-h}, convergence outcomes concerning the case $\sqrt{k}/h < L \leq \alpha$.
	\begin{prop}\label{prop limits of U and V}\ \\
		Assume $0 < \alpha \leq \mathcal{C}h$ for some $\mathcal{C}>0$ independent of $\alpha, k$ and $h$. There exist $u ,v \in L^{2}(\Omega; L^{2}(0,T; \mathbb{H}^{1}_{0}))$ such that the convergences below occur, up to extractions, as $k,h \to 0$:
		\begin{equation}\label{eq U convergence}
			\begin{aligned}
				&\mathcal{U}^{+}_{k,h} \rightharpoonup u \ \ \ \& \ \ \ \mathcal{V}^{+}_{k,h} \rightharpoonup v \ \ \mbox{ in } \ \ L^{2}(\Omega; L^{2}(0,T; \mathbb{H}^{1}_{0})),
				\\& \mathcal{U}_{k,h}^{+} \to u \ \ \ \& \ \ \ \mathcal{V}^{+}_{k,h} \to v \ \ \mbox{ in } \ \ L^{2}(\Omega; L^{2}(0,T;\mathbb{L}^{2})),
				\\& \mathcal{V}_{k,h} \to v \ \ \mbox{ in } L^{2}(\Omega; L^{2}(0,T;\mathbb{L}^{2})),
				\\& \mathcal{V}^{-}_{k,h} \to v \ \ \mbox{ in } \ \ L^{2}(\Omega; L^{2}(0,T;\mathbb{L}^{2})).
			\end{aligned}
		\end{equation}
	\end{prop}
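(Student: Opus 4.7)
\emph{Stage 1 (weak extraction).} From Lemma~\ref{lemma3.3.1}-(ii), combined with the norm equivalence~\eqref{eq norm equivalence} and Poincar\'e's inequality, the family $(\mathcal{U}^+_{k,h})$ is uniformly bounded in the reflexive Hilbert space $L^2(\Omega;L^2(0,T;\mathbb{H}^1_0))$; the analogous bound for $(\mathcal{V}^+_{k,h})$ follows from Lemma~\ref{lemma V a priori estimates}. Banach--Alaoglu then delivers, up to subsequences, weak limits $u$ and $v$ in $L^2(\Omega;L^2(0,T;\mathbb{H}^1_0))$.

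\emph{Stage 2 (strong convergence on the sample subset).} Since $\alpha\leq\mathcal{C}h$, the relevant compactness tool is Lemma~\ref{lemma convergence discrete} applied pointwise in $\omega\in\Omega^\varepsilon_{k,h}$, with $B=\mathbb{L}^2$ and $M_h=\mathbb{V}_h$ endowed with the $\mathbb{H}^1$-norm. Hypothesis~(i) is the Rellich compactness $\mathbb{H}^1_0\hookrightarrow\mathbb{L}^2$; hypothesis~(ii) is encoded into the definition of $\Omega^\varepsilon_{k,h}$, which enforces $\int_0^T\|\nabla\mathcal{U}^+_{k,h}\|^2_{\mathbb{L}^2}\,dt\leq 1/\varepsilon$ and (via Lemma~\ref{lemma V < U}-(ii)) the analogous deterministic bound on $\mathcal{V}^+_{k,h}$; hypothesis~(iii) is the translation estimate of Lemma~\ref{lemma translation property}, transferred to $\mathcal{V}^+_{k,h}$ through Lemma~\ref{lemma V < U}-(iii). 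Combining pointwise relative compactness with the weak convergence of Stage~1 and a diagonal extraction across $\omega$ yields a common subsequence along which $\mathcal{U}^+_{k,h}\to u$ and $\mathcal{V}^+_{k,h}\to v$ strongly in $L^2(0,T;\mathbb{L}^2)$ for almost every $\omega\in\Omega^\varepsilon_{k,h}$; uniqueness of weak limits identifies the strong pointwise limits with $u,v$ from Stage~1, and the Urysohn subsequence principle promotes this to convergence of the whole sequence on $\Omega^\varepsilon_{k,h}$.

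\emph{Stage 3 (transfer to all of $\Omega$).} Decompose
\begin{equation*}
\mathbb{E}\|\mathcal{U}^+_{k,h}-u\|^2_{L^2(0,T;\mathbb{L}^2)}=I^\varepsilon_{k,h}+J^\varepsilon_{k,h},
\end{equation*}
where $I^\varepsilon_{k,h}$ (resp.\ $J^\varepsilon_{k,h}$) is the expectation over $\Omega^\varepsilon_{k,h}$ (resp.\ its complement). The tail is controlled by Cauchy--Schwarz, the measure estimate~\eqref{eq sample subset measure}, and the uniform fourth-moment bound of Lemma~\ref{lemma3.3.1}-(iii) (together with weak lower semicontinuity for $u$), giving $J^\varepsilon_{k,h}\lesssim\sqrt{\varepsilon}$ uniformly in $(k,h)$. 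The bulk $I^\varepsilon_{k,h}$ tends to $0$ as $(k,h)\to 0$ by Stage~2 combined with dominated convergence, since on $\Omega^\varepsilon_{k,h}$ the integrand is dominated by a deterministic constant depending only on $\varepsilon$ (Poincar\'e). Sending $(k,h)\to 0$ first and then $\varepsilon\to 0$ yields strong convergence in $L^2(\Omega;L^2(0,T;\mathbb{L}^2))$; the argument for $\mathcal{V}^+_{k,h}\to v$ is identical.

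\emph{Stage 4 (the two remaining limits).} For $\mathcal{V}_{k,h}$ and $\mathcal{V}^-_{k,h}$, identities~\eqref{eq new process prop1}--\eqref{eq new process prop2} give directly
\begin{equation*}
\|\mathcal{V}_{k,h}-\mathcal{V}^\pm_{k,h}\|^2_{L^2(\Omega;L^2(0,T;\mathbb{L}^2))}\leq k\sum_{m=1}^{M}\mathbb{E}\|V^m-V^{m-1}\|^2_{\mathbb{L}^2}\lesssim k\,C'_T\xrightarrow[k\to 0]{}0
\end{equation*}
by Lemma~\ref{lemma V a priori estimates}. Combined with Stage~3, this produces $\mathcal{V}_{k,h}\to v$ and $\mathcal{V}^-_{k,h}\to v$ in $L^2(\Omega;L^2(0,T;\mathbb{L}^2))$, completing the proof.

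The principal difficulty is the interplay between the purely deterministic compactness lemma and the stochastic target space. The sample subset $\Omega^\varepsilon_{k,h}$ is precisely designed to localize the randomness so that a pointwise (deterministic) compactness argument becomes available, while the crucial feature that $\varepsilon$ in~\eqref{eq sample subset measure} is independent of $(k,h)$, together with the $L^4$-moment bound from Lemma~\ref{lemma3.3.1}-(iii), makes the excision uniform enough for $\varepsilon\to 0$ to be performed after $(k,h)\to 0$ in Stage~3.
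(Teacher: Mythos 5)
Your overall architecture coincides with the paper's: weak extraction from the uniform bounds of Lemmas~\ref{lemma3.3.1}-$(ii)$ and \ref{lemma V a priori estimates}, compactness localized to the sample subset $\Omega_{k,h}^{\varepsilon}$ via the translation estimate of Lemma~\ref{lemma translation property} (with Lemma~\ref{lemma V < U} transferring it to $\mathcal{V}^{+}_{k,h}$), a splitting of the expectation over $\Omega_{k,h}^{\varepsilon}$ and its complement controlled by \eqref{eq sample subset measure} and the fourth-moment bound of Lemma~\ref{lemma3.3.1}-$(iii)$, and the interpolation identities \eqref{eq new process prop1}--\eqref{eq new process prop2} for the last two limits. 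A minor deviation: for the regime $\alpha \leq \mathcal{C}h$ the paper applies Lemma~\ref{lemma convergence} with $M_{+} = \mathbb{H}^{1}_{0}$ and $Y = \mathbb{L}^{2}$; your use of Lemma~\ref{lemma convergence discrete} with $M_{h} = \mathbb{V}_{h}$ in the $\mathbb{H}^{1}$-norm is an admissible, if heavier, substitute, since its hypothesis $(i)$ reduces to Rellich's theorem.

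The genuine gap is in Stage~2, exactly at the step on which the proposition hinges. Relative compactness obtained $\omega$-by-$\omega$ yields subsequences of $(k,h)$ that depend on $\omega$; a ``diagonal extraction across $\omega$'' is unavailable because $\omega$ ranges over an uncountable set, so it does not produce a single subsequence along which $\mathcal{U}^{+}_{k,h}(\omega)$ converges for a.e.\ $\omega$. Even along an $\omega$-dependent subsequence, nothing identifies the $\omega$-wise limit with $u(\omega)$: weak convergence in the Bochner space $L^{2}(\Omega; L^{2}(0,T; \mathbb{H}^{1}_{0}))$ carries no pointwise-in-$\omega$ information, so ``uniqueness of weak limits'' and the Urysohn principle cannot be invoked as you do; in addition, ``for almost every $\omega \in \Omega_{k,h}^{\varepsilon}$'' is ill-posed because the set itself moves with $(k,h)$. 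Since your Stage~3 bulk term relies on this a.e.\ convergence for its dominated-convergence step, the gap propagates to the conclusion. The paper circumvents all three difficulties by working with the indicator-weighted processes $\mathbb{1}_{\Omega_{k,h}^{\varepsilon}}\mathcal{U}^{+}_{k,h}$, which are defined on all of $\Omega$ and dominated by the deterministic constant $1/\varepsilon$: it obtains $\mathbb{P}$-a.s.\ relative compactness in $L^{2}(0,T;\mathbb{L}^{2})$, upgrades the resulting norm convergence to $L^{2}(\Omega;L^{2}(0,T;\mathbb{L}^{2}))$ by dominated convergence, proves separately that $\mathbb{1}_{\Omega_{k,h}^{\varepsilon}}\mathcal{U}^{+}_{k,h} \rightharpoonup u$ in $L^{2}(\Omega;L^{2}(0,T;\mathbb{L}^{2}))$ using \eqref{eq sample subset measure}, and concludes strong convergence by combining weak convergence with convergence of norms in that Hilbert space, never needing any pointwise-in-$\omega$ identification. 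To repair your Stage~2 you should adopt this indicator device (or an equivalent convergence-in-probability/Vitali argument) in place of the pointwise diagonalization.
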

	\begin{proof}
		We point out that all subsequences in this proof will be denoted as their original sequences for the sake of clarity.
		By virtue of Lemmas~\ref{lemma3.3.1}-(ii) and \ref{lemma V a priori estimates}, $\{\mathcal{U}^{+}_{k,h}\}_{k,h}$ and $\{\mathcal{V}_{k,h}^{+}\}_{k,h}$ are bounded in the Hilbert space $L^{2}(\Omega; L^{2}(0,T; \mathbb{H}^{1}_{0}))$. This implies the existence of two subsequences of $\{\mathcal{U}_{k,h}^{+}\}_{k,h}$ and $\{\mathcal{V}^{+}_{k,h}\}_{k,h}$ such that $\mathcal{V}^{+}_{k,h} \rightharpoonup v$ and
		$\mathcal{U}^{+}_{k,h} \rightharpoonup u$  in $L^{2}(\Omega; L^{2}(0,T; \mathbb{H}^{1}_{0}))$ for some functions $v$ and $u$ belonging to the same space of convergence. To justify \eqref{eq U convergence}$_{2}$, we need to apply Lemma~\ref{lemma convergence}. Indeed, we have $\mathbb{H}^{1}_{0} \hookrightarrow \mathbb{L}^{2}$ compactly, $\{\mathbb{1}_{\Omega_{k,h}^{\varepsilon}}\mathcal{U}^{+}_{k,h}\}_{k,h}$ is $\mathbb{P}$-a.s. bounded in $L^{2}(0,T; \mathbb{H}^{1}_{0})$, and for all $\ell \in \{1, \dotsc, M-1\}$, there holds $\mathbb{P}$-a.s.
		\begin{align*}
			\left|\left|\tau_{t_{\ell}}\mathbb{1}_{\Omega_{k,h}^{\varepsilon}}\mathcal{U}^{+}_{k,h} - \mathbb{1}_{\Omega_{k,h}^{\varepsilon}}\mathcal{U}_{k,h}^{+}\right|\right|_{L^{2}(0,T - t_{\ell}; \mathbb{L}^{2})}^{2} = \mathbb{1}_{\Omega_{k,h}^{\varepsilon}}k\sum_{m=1}^{M-\ell}\left|\left|U^{m+\ell} - U^{m}\right|\right|_{\mathbb{L}^{2}}^{2} \leq Ct_{\ell}^{\frac{1}{2}} \xrightarrow[t_{\ell} \to 0]{}0, 
		\end{align*}
		uniformly in $\mathcal{U}_{k,h}^{+}$, thanks to Lemma~\ref{lemma translation property}. As a result, $\{\mathbb{1}_{\Omega_{k,h}^{\varepsilon}}\mathcal{U}_{k,h}^{+}\}_{k,h}$ is $\mathbb{P}$-a.s. relatively compact in $L^{2}(0,T; \mathbb{L}^{2})$. In other words, $\{||\mathbb{1}_{\Omega_{k,h}^{\varepsilon}}\mathcal{U}_{k,h}^{+}||_{L^{2}(0,T; \mathbb{L}^{2})}^{2}\}_{k,h}$ is $\mathbb{P}$-a.s. convergent. Moreover, $||\mathbb{1}_{\Omega_{k,h}^{\varepsilon}}\mathcal{U}_{k,h}^{+}||^{2}_{L^{2}(0,T: \mathbb{L}^{2})} \leq 1/\varepsilon$, $\mathbb{P}$-a.s. where the function $\omega \mapsto \frac{1}{\varepsilon} \in L^{1}(\Omega)$. Therefore, by the dominated convergence theorem, $\{||\mathbb{1}_{\Omega_{k,h}^{\varepsilon}}\mathcal{U}^{+}_{k,h}||_{L^{2}(\Omega; L^{2}(0,T; \mathbb{L}^{2}))}\}_{k,h}$ is convergent. We need to prove in addition to the latter that $\{\mathbb{1}_{\Omega_{k,h}^{\varepsilon}}\mathcal{U}^{+}_{k,h}\}_{k,h}$ is weakly convergent in $L^{2}(\Omega; L^{2}(0,T; \mathbb{L}^{2}))$ to achieve strong convergence. We have $\mathbb{E}\left[\int_{0}^{T}||\mathbb{1}_{(\Omega_{k,h}^{\varepsilon})^{c}}||^{2}_{\mathbb{L}^{2}}dt\right] = |D|T\mathbb{P}((\Omega_{k,h}^{\varepsilon})^{c}) \lesssim |D|T\varepsilon$, for all $\varepsilon>0$, thanks to estimate~\eqref{eq sample subset measure}. By \eqref{eq U convergence}$_{1}$, one gets $\mathcal{U}^{+}_{k,h} \rightharpoonup u$ in $L^{2}(\Omega; L^{2}(0,T; \mathbb{L}^{2}))$. Thus, $\mathbb{E}\left[\int_{0}^{T}\left(\mathbb{1}_{\Omega_{k,h}^{\varepsilon}}\mathcal{U}^{+}_{k,h}, \varphi\right)dt\right] = -\mathbb{E}\left[\int_{0}^{T}\left(\mathcal{U}_{k,h}^{+}, \mathbb{1}_{(\Omega_{k,h}^{\varepsilon})^{c}}\varphi\right)dt\right] + \mathbb{E}\left[\int_{0}^{T}\left(\mathcal{U}_{k,h}^{+}, \varphi\right)dt\right]$, for all $\varphi \in L^{2}(\Omega; L^{2}(0,T; \mathbb{L}^{2}))$, which converges toward the term $\mathbb{E}\left[\int_{0}^{T}\left(u(t), \varphi\right)dt\right]$. Note that the test function $\varphi$ can be considered in a dense set of $L^{2}(\Omega; L^{2}(0,T; \mathbb{L}^{2})$ since $\{\mathbb{1}_{\Omega_{k,h}^{\varepsilon}}\mathcal{U}^{+}_{k,h}\}_{k,h}$ is bounded in the latter space. Subsequently, $\mathbb{1}_{\Omega_{k,h}^{\varepsilon}}\mathcal{U}^{+}_{k,h} \to u$ in $L^{2}(\Omega; L^{2}(0,T; \mathbb{L}^{2}))$. As a result,
		\begin{align*}
			\mathbb{E}\left[\int_{0}^{T}||\mathcal{U}^{+}_{k,h} - u||^{2}_{\mathbb{L}^{2}}dt\right] &\lesssim \mathbb{E}\left[\int_{0}^{T}\left(||\mathbb{1}_{(\Omega_{k,h}^{\varepsilon})^{c}}\mathcal{U}^{+}_{k,h}||^{2}_{\mathbb{L}^{2}} + ||\mathbb{1}_{\Omega_{k,h}^{\varepsilon}}\mathcal{U}^{+}_{k,h} - u||^{2}_{\mathbb{L}^{2}}\right)dt\right] \xrightarrow[k,h \to 0]{}0.
		\end{align*}
		This convergence takes place due to what we have shown so far in this proof and the boundedness of $\{\mathcal{U}^{+}_{k,h}\}_{k,h}$ in $L^{4}(\Omega; L^{\infty}(0,T; \mathbb{L}^{2}))$, thanks to Lemma~\ref{lemma3.3.1}-$(iii)$. The convergence of $\{\mathcal{V}^{+}_{k,h}\}_{k,h}$ is done similarly through Lemmas~\ref{lemma V < U}-$(iii)$ and \ref{lemma V a priori estimates}. Moving on to \eqref{eq U convergence}$_{3}$, we have
		\begin{align*}
			\mathbb{E}\left[\int_{0}^{T}\left|\left|\mathcal{V}_{k,h} - v\right|\right|^{2}_{\mathbb{L}^{2}}dt\right] &\lesssim \mathbb{E}\left[\int_{0}^{T}\left(||\mathcal{V}_{k,h} - \mathcal{V}_{k,h}^{+}||^{2}_{\mathbb{L}^{2}} + ||\mathcal{V}_{k,h}^{+} - v||^{2}_{\mathbb{L}^{2}}\right)dt\right] = I_{1} + I_{2}.
		\end{align*}
		From \eqref{eq U convergence}$_{2}$, we get $I_{2} \xrightarrow[k,h \to 0]{} 0$. On the other hand, identity~\eqref{eq new process prop2} yields
		$$I_{1} = \sum_{m=1}^{M}\mathbb{E}\left[||V^{m} - V^{m-1}||^{2}_{\mathbb{L}^{2}}\right]\int_{t_{m-1}}^{t_m}\frac{(t -  t_{m})^{2}}{k^{2}}dt \leq \frac{k}{3}C_{T} \xrightarrow[k,h \to 0]{} 0,$$
		thanks to Lemma~\ref{lemma V a priori estimates}. The proof of \eqref{eq U convergence}$_{4}$ follows from \eqref{eq new process prop1} and the proving technique of \eqref{eq U convergence}$_{3}$.
	\end{proof}
	
	\begin{rmk}
		In the three-dimensional stochastic NSEs framework, the obtained convergence results in proposition~\ref{prop limits of U and V} remain up to extractions due to the nonuniqueness of the corresponding solution, conversely to the the two-dimensional case whose solution is unique.
	\end{rmk}
	
	The limiting function $u_{\alpha}$ in the next proposition does not coincide with $u$ that was found in Proposition~\ref{prop limits of U and V}. It is worth mentioning that one can demonstrate the convergence of whole sequences $\{\mathcal{U}^{+}_{k,h}\}_{k,h}$, $\{\mathcal{V}^{+}_{k,h}\}_{k,h}$ and $\{\mathcal{V}_{k,h}\}_{k,h}$ once we verify that the limiting functions satisfy equation~\eqref{eq definiton solution modified}, $\mathbb{P}$-a.s. and for $t \in [0,T]$. Such an idea is true due to the solution's uniqueness of equations~\eqref{main equation} (see for instance \cite[Theorem 4.4]{Caraballo2006Takeshi}).
	
	\begin{prop}\label{prop convergence U k-h}\ \\
		Assume $0 < \sqrt{k}/h < L \leq \alpha$, for some $L \in (0,1)$ independent of $k$ and $h$. Then, there exist two functions $u_{\alpha} \in L^{2}(\Omega; L^{2}(0,T; \mathbb{H}^{2}\cap\mathbb{H}^{1}_{0}))$ and $v_{\alpha} \in L^{2}(\Omega; L^{2}(0,T; \mathbb{L}^{2}))$ such that, up to extractions and as $k, h \to 0$, one gets
		\begin{equation}\label{eq U convergence k-h}
			\begin{aligned}
				&\mathcal{U}^{+}_{k,h} \to u_{\alpha} \ \ \mbox{ in } \ \ L^{2}(\Omega; L^{2}(0,T; \mathbb{H}^{1}_{0})), \\& \mathcal{V}_{k,h}^{+} \rightharpoonup v_{\alpha} \ \ \mbox{ in } \ \ L^{2}(\Omega; L^{2}(0,T; \mathbb{L}^{2})),
				\\& \mathcal{V}^{-}_{k,h} \rightharpoonup v_{\alpha} \ \ \mbox{ in } \ \ L^{2}(\Omega; L^{2}(0,T; \mathbb{L}^{2})).
			\end{aligned}
		\end{equation}
		and $v_{\alpha} = u_{\alpha} - \alpha^{2}\Delta u_{\alpha}$, $\mathbb{P}$-almost surely and almost everywhere in $(0,T)\times D$.
	\end{prop}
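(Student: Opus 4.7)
The plan is to combine the uniform a priori bounds with a discrete compactness argument to extract the subsequences, then use the filter equation of Algorithm~\ref{Algorithm} to identify the limits. First I would extract weak limits. Lemma~\ref{lemma3.3.1}-(ii) together with the hypothesis $\alpha \geq L > 0$, which yields $\|\Delta^h U^m\|_{\mathbb{L}^2}^2 \leq L^{-2}\alpha^2\|\Delta^h U^m\|_{\mathbb{L}^2}^2$, gives that $\{\mathcal{U}_{k,h}^+\}$ is bounded in $L^2(\Omega; L^2(0,T; \mathbb{H}^1_0))$ and $\{\Delta^h \mathcal{U}_{k,h}^+\}$ is bounded in $L^2(\Omega; L^2(0,T; \mathbb{L}^2))$; Lemma~\ref{lemma a priori estimate V alpha indep.} gives boundedness of $\{\mathcal{V}^{\pm}_{k,h}\}$ in $L^2(\Omega; L^2(0,T; \mathbb{L}^2))$. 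Up to extraction, these yield weak limits $\mathcal{U}_{k,h}^+ \rightharpoonup u_\alpha$ and $\mathcal{V}_{k,h}^\pm \rightharpoonup v_\alpha^\pm$. The equality $v_\alpha^+ = v_\alpha^-$ follows from the shift identity $\mathcal{V}_{k,h}^-(t) = \mathcal{V}_{k,h}^+(t-k)$ (extended by $V^0$ on $[-k,0]$) combined with continuity of time-translation against $C^1$ test functions, using the uniform $L^2$-bound on $\mathcal{V}_{k,h}^+$.

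The second step is to upgrade $\mathcal{U}^+_{k,h} \rightharpoonup u_\alpha$ to strong convergence in $L^2(\Omega; L^2(0,T; \mathbb{H}^1_0))$ via Lemma~\ref{lemma convergence discrete} applied pathwise on the sample subset $\Omega_{k,h}^\varepsilon$ adjusted as in Remark~\ref{rmk k-h relation}, with $B = \mathbb{H}^1_0$ and $M_h = \mathbb{V}_h$ endowed with $\|v_h\|_{M_h} := \|\nabla v_h\|_{\mathbb{L}^2} + \|\Delta^h v_h\|_{\mathbb{L}^2}$. Hypothesis~(ii) of that lemma is inherited from the a priori bounds above; hypothesis~(iii) on the time-translation is Lemma~\ref{lemma translation property} combined with the norm equivalence $\|\cdot\|_{\mathbb{H}^1} \leq L^{-1}\|\cdot\|_\alpha$, which yields $k\sum_{m=1}^{M-\ell}\|U^{m+\ell} - U^m\|_{\mathbb{H}^1}^2 \leq L^{-2} C t_\ell^{1/2}$. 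Once pathwise relative compactness on $\Omega_{k,h}^\varepsilon$ is obtained, one transfers it to the full probability space by the same bounded-convergence and uniform $L^4$-integrability argument employed in Proposition~\ref{prop limits of U and V}, using Lemma~\ref{lemma3.3.1}-(iii) together with estimate~\eqref{eq sample subset measure}.

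The main obstacle is the compactness hypothesis~(i) of Lemma~\ref{lemma convergence discrete}: for a sequence $v_h \in \mathbb{V}_h$ with $\|v_h\|_{M_h} \leq C$ uniformly in $h$, one must show relative compactness of $\{v_h\}_h$ in $\mathbb{H}^1_0$. I would view $v_h$ as the solution of the discrete Stokes problem $(\nabla v_h, \nabla \varphi_h) = (f_h, \varphi_h)$ for all $\varphi_h \in \mathbb{V}_h$, with $f_h := -\Delta^h v_h$ bounded in $\mathbb{L}^2$; then extract $f_h \rightharpoonup f$ in $\mathbb{L}^2$ and let $w \in \mathbb{H}^2 \cap \mathbb{V}$ be the continuous Stokes solution with right-hand side $f$ (Cattabriga regularity on the convex polyhedral domain $D$). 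The discrete-Stokes $\mathbb{H}^1$-error estimate gives $\|\tilde{w}_h(f) - w\|_{\mathbb{H}^1} \lesssim h \|w\|_{\mathbb{H}^2} \to 0$, while $v_h - \tilde{w}_h(f) = \tilde{w}_h(f_h - f)$ is controlled in $\mathbb{H}^1_0$ by $\|f_h - f\|_{\mathbb{H}^{-1}}$, which vanishes by the compact embedding $\mathbb{L}^2 \hookrightarrow \mathbb{H}^{-1}$ on the bounded domain $D$. Hence $v_h \to w$ in $\mathbb{H}^1_0$.

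Finally, to identify $v_\alpha = u_\alpha - \alpha^2 \Delta u_\alpha$, I would test the second line of~\eqref{eq3.3.1} with $\psi = \Pi_h \phi$ for arbitrary $\phi \in \mathcal{V}$, multiply by $\xi \in L^\infty(\Omega)$ and integrate in $(\omega, t)$. The strong convergence $\mathcal{U}^+_{k,h} \to u_\alpha$ in $L^2(\Omega; L^2(0,T; \mathbb{H}^1_0))$, the weak convergence $\mathcal{V}^+_{k,h} \rightharpoonup v_\alpha$, and the approximation property $\Pi_h \phi \to \phi$ in $\mathbb{H}^1$ yield
\begin{equation*}
\mathbb{E}\left[\xi \int_0^T (v_\alpha, \phi)\,dt\right] = \mathbb{E}\left[\xi \int_0^T \big((u_\alpha, \phi) + \alpha^2 (\nabla u_\alpha, \nabla \phi)\big)\,dt\right],
\end{equation*}
so $v_\alpha = u_\alpha - \alpha^2 \Delta u_\alpha$ in $\mathcal{V}'$. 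Since $v_\alpha \in L^2(\Omega; L^2(0,T; \mathbb{L}^2))$ and $u_\alpha \in L^2(\Omega; L^2(0,T; \mathbb{H}^1_0))$, the identity rewrites $\Delta u_\alpha = \alpha^{-2}(u_\alpha - v_\alpha) \in L^2(\Omega; L^2(0,T; \mathbb{L}^2))$, and elliptic regularity for the Stokes operator on $D$ then promotes $u_\alpha$ to $L^2(\Omega; L^2(0,T; \mathbb{H}^2 \cap \mathbb{H}^1_0))$ and makes the identity an a.e. equality in $(0,T)\times D$.
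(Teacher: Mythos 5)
Your proposal reproduces the paper's overall architecture — weak limits from Lemmas~\ref{lemma3.3.1} and \ref{lemma a priori estimate V alpha indep.} using $\alpha\geq L$, then Lemma~\ref{lemma convergence discrete} applied pathwise on the sample subset $\Omega_{k,h}^{\varepsilon}$ adjusted as in Remark~\ref{rmk k-h relation}, the translation estimate of Lemma~\ref{lemma translation property} promoted to $\mathbb{H}^{1}_{0}$ via $\left|\left|\cdot\right|\right|_{\mathbb{H}^{1}}\leq L^{-1}\left|\left|\cdot\right|\right|_{\alpha}$, and the transfer from $\Omega_{k,h}^{\varepsilon}$ to $\Omega$ exactly as in Proposition~\ref{prop limits of U and V} — but three sub-arguments are genuinely different. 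For hypothesis $(i)$ of Lemma~\ref{lemma convergence discrete}, the paper extracts weak limits $\varphi_{h}\rightharpoonup\varphi$ in $\mathbb{H}^{1}_{0}$ and $\Delta^{h}\varphi_{h}\rightharpoonup\Delta\varphi$ in $\mathbb{L}^{2}$ and upgrades to strong $\mathbb{H}^{1}_{0}$-convergence through the norm identity $\left|\left|\nabla\varphi_{h}\right|\right|^{2}_{\mathbb{L}^{2}}=-\left(\Delta^{h}\varphi_{h},\varphi_{h}\right)\to\left|\left|\nabla\varphi\right|\right|^{2}_{\mathbb{L}^{2}}$; you instead read $v_{h}$ as the discrete Stokes solution with datum $f_{h}=-\Delta^{h}v_{h}$ and combine the $O(h)$ velocity error estimate with the compactness of $\mathbb{L}^{2}\hookrightarrow\mathbb{H}^{-1}$ and the uniform bound $\left|\left|\nabla\tilde{w}_{h}(g)\right|\right|_{\mathbb{L}^{2}}\lesssim\left|\left|g\right|\right|_{\mathbb{H}^{-1}}$. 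That is correct and in fact handles the divergence constraint more transparently, at the price of invoking standard Stokes finite element error theory (inf-sup condition~\eqref{eq LBB condition} plus velocity and pressure approximation), which the paper never sets up explicitly. For $\mathcal{V}^{-}_{k,h}\rightharpoonup v_{\alpha}$, your time-shift duality argument replaces the paper's quantitative bound $\left|\left|\Delta^{h}\varphi_{h}\right|\right|_{\mathbb{L}^{2}}\lesssim h^{-1}\left|\left|\nabla\varphi_{h}\right|\right|_{\mathbb{L}^{2}}$ combined with the increment estimates and the condition $\sqrt{k}/h<\alpha$; your route avoids that condition here and works, provided the boundary contributions near $t=0$ and $t=T$ are handled by absolute continuity of the integral of the (fixed) test function rather than by smallness of $k\left|\left|V^{M}\right|\right|^{2}_{\mathbb{L}^{2}}$.

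The one place where your chain does not literally close is the final identification. Testing the second line of~\eqref{eq3.3.1} with $\Pi_{h}\phi$, $\phi\in\mathcal{V}$, only yields, after passing to the limit, that $v_{\alpha}-u_{\alpha}+\alpha^{2}\Delta u_{\alpha}$ annihilates divergence-free test functions, i.e. $-\alpha^{2}\Delta u_{\alpha}+u_{\alpha}+\nabla\tilde{\pi}=v_{\alpha}$ for some pressure $\tilde{\pi}$ (de Rham); Stokes regularity on the convex domain then indeed gives $u_{\alpha}\in L^{2}(\Omega;L^{2}(0,T;\mathbb{H}^{2}\cap\mathbb{H}^{1}_{0}))$, but your statement "the identity rewrites $\Delta u_{\alpha}=\alpha^{-2}(u_{\alpha}-v_{\alpha})$" silently discards $\nabla\tilde{\pi}$, and nothing in your argument shows this gradient vanishes. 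The paper obtains the pressure-free a.e. identity differently: it passes to the limit in the exact algebraic relation $\mathcal{V}^{+}_{k,h}=\mathcal{U}^{+}_{k,h}-\alpha^{2}\Delta^{h}\mathcal{U}^{+}_{k,h}$ of Lemma~\ref{lemma discrete diff filter Hh}-$(i)$, identifying the weak $\mathbb{L}^{2}$-limit of $\Delta^{h}\mathcal{U}^{+}_{k,h}$ with $\Delta u_{\alpha}$ and deducing the $\mathbb{H}^{2}$-regularity by weak lower semicontinuity. To recover the proposition as stated you should either follow that route (you already have the weak limit of $\Delta^{h}\mathcal{U}^{+}_{k,h}$ at your disposal from your first step) or argue separately that the pressure term is absent; as written, your step~4 proves the filter relation only modulo a gradient.
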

	\begin{proof}
		Once again, all subsequences in this proof will be denoted as their original sequences. We define $M_{h}^{\alpha} \coloneqq \big\{u \in \mathbb{V}_{h} \ | \ \left|\left|u\right|\right|_{h,\alpha} < +\infty\big\}$, where $||\cdot||^{2}_{h,\alpha} \coloneqq ||\nabla\cdot||_{\mathbb{L}^{2}}^{2} + \alpha^{2}||\Delta^{h}\cdot||_{\mathbb{L}^{2}}^{2}$. Obviously, $M_{h}^{\alpha}$ is a subspace of $\mathbb{H}_{0}^{1}$, and $(M_{h}^{\alpha}, ||\cdot||_{h,\alpha})$ forms a normed space. Note that Lemma~\ref{lemma convergence} is no longer applicable since $M_{h}^{\alpha}$ depends on $h$. However, to come out with a strong convergence in $\mathbb{H}_{0}^{1}$, we shall apply Lemma~\ref{lemma convergence discrete} within the sample subset $\Omega_{k,h}^{\varepsilon}$ that was exclusively modified for the case $\sqrt{k}/h < L$ (see Remark~\ref{rmk k-h relation}) . We begin by proving the relative compactness of $M_{h}^{\alpha}$ in $\mathbb{H}_{0}^{1}$: let $(\varphi_{h})_{h}$ be a bounded sequence in $(M_{h}^{\alpha}, ||\cdot||_{h,\alpha})$. Therefore, $(\varphi_{h})_{h}$ (resp. $(\Delta^{h}\varphi_{h})_{h}$) converges weakly as $h \to 0$, in $\mathbb{H}_{0}^{1}$ (resp. $\mathbb{L}^{2}$) toward a function $\varphi$ (resp. $z$), up to an extraction. Let $\psi \in [C_{c}^{\infty}(D)]^{d}$. By identities~\ref{def projection} and \ref{def discrete Laplace}, $\left(\Delta^{h}\varphi_{h}, \psi\right) = \left(\Delta^{h}\varphi_{h}, \Pi_{h}\psi\right) = - \left(\nabla \varphi_{h}, \nabla \Pi_{h}\psi\right) \to -\left(\nabla \varphi, \nabla \psi\right) = \left(\Delta \varphi, \psi\right)$, thanks to estimate~\eqref{eqprojection}. Therefore, $z = \Delta\varphi$ a.e. in $D$. Note that $\varphi \in \mathbb{H}^{2}\cap \mathbb{H}^{1}_{0}$. The strong convergence of $(\varphi_{h})_{h}$ in $\mathbb{H}_{0}^{1}$ follows from its weak convergence along with the property: $||\nabla \varphi_{h}||^{2}_{\mathbb{L}^{2}} = -\left(\Delta^{h}\varphi_{h}, \varphi_{h}\right) \to -\left(\Delta \varphi, \varphi\right) = ||\nabla\varphi||^{2}_{\mathbb{L}^{2}}$, where we used identity~\ref{def discrete Laplace} and the weak and strong (which arises from the compact embedding $\mathbb{H}_{0}^{1} \hookrightarrow \mathbb{L}^{2}$) convergences of $(\Delta^{h}\varphi_{h})_{h}$ and $(\varphi_{h})_{h}$ in $\mathbb{L}^{2}$, respectively. On the other hand, $\{\mathbb{1}_{\Omega_{k,h}^{\varepsilon}}\mathcal{U}^{+}_{k,h}\}_{k,h}$ is $\mathbb{P}$-a.s. bounded in $L^{2}(0,T; M^{\alpha}_{h})$ and $L^{2}(0,T; \mathbb{H}_{0}^{1})$, thanks to the definition of $\Omega_{k,h}^{\varepsilon}$. Moreover, by Lemma~\ref{lemma translation property}, $\mathbb{1}_{\Omega_{k,h}^{\varepsilon}}||\tau_{t_{\ell}}\mathcal{U}^{+}_{k,h} - \mathcal{U}^{+}_{k,h}||_{L^{2}(0,T-t_{\ell}; \mathbb{H}^{1}_{0})} \to 0$, as $t_{\ell} \to 0$, uniformly in $k$ and $h$. The latter convergence holds in $\mathbb{H}^{1}_{0}$ due to the norm equivalence~\eqref{eq norm equivalence}. Subsequently, all conditions of Lemma~\ref{lemma convergence discrete} are met, we infer that $\{\mathbb{1}_{\Omega_{k,h}^{\varepsilon}}\mathcal{U}^{+}_{k,h}\}_{k,h}$ is $\mathbb{P}$-a.s. relatively compact in $L^{2}(0,T; \mathbb{H}_{0}^{1})$. Hence, convergence~\eqref{eq U convergence k-h}$_{1}$ can be justified as in the proof of Proposition~\ref{prop limits of U and V}, where we recall that $||\cdot||_{\alpha}$ is equivalent to $||\cdot||_{\mathbb{H}_{0}^{1}}$ because $\alpha$ does not depend on $k$ and $h$. Let $u_{\alpha}$ denote the limit of $\{\mathcal{U}^{+}_{k,h}\}_{k,h}$ in $L^{2}(\Omega; L^{2}(0,T; \mathbb{H}^{1}_{0}))$. By Lemma~\ref{lemma3.3.1}-$(ii)$, $\{\Delta^{h}\mathcal{U}^{+}_{k,h}\}_{k,h}$ converges weakly, up to a subsequence, to a function $z$ in $L^{2}(\Omega; L^{2}(0,T; \mathbb{L}^{2})).$ As done earlier in this proof, one can prove that $z = \Delta u_{\alpha}$. Thus, $\mathbb{E}\left[\int_{0}^{T}||\Delta u_{\alpha}(t)||^{2}_{\mathbb{L}^{2}}dt\right] \leq \liminf\mathbb{E}\left[\int_{0}^{T}||\Delta^{h}\mathcal{U}^{+}_{k,h}||^{2}_{\mathbb{L}^{2}}dt\right] \leq \alpha^{-2}C_{T}$, which implies $u_{\alpha} \in L^{2}(\Omega; L^{2}(0,T; \mathbb{H}^{2}\cap\mathbb{H}_{0}^{1}))$, thanks to the domain's properties. Moving on to convergence~\eqref{eq U convergence k-h}$_{2}$, by Lemma~\ref{lemma a priori estimate V alpha indep.}, $\{\mathcal{V}^{+}_{k,h}\}_{k,h}$ converges weakly, up to a subsequence, to a function $v_{\alpha}$, in $L^{2}(\Omega; L^{2}(0,T; \mathbb{L}^{2}))$. Additionally, by Lemma~\ref{lemma discrete diff filter Hh}-$(i)$, we have $\mathbb{P}$-a.s. and a.e. in $(0,T)\times D$ that $\mathcal{V}^{+}_{k,h} = \mathcal{U}^{+}_{k,h} - \alpha^{2}\Delta^{h}\mathcal{U}^{+}_{k,h}$. By relying on what we have proven so far in this proof, one gets the identification $v_{\alpha} = u_{\alpha} - \alpha^{2}\Delta u_{\alpha}$, $\mathbb{P}$-a.s. and a.e. in $(0,T)\times D$. For the last convergence, we need the following estimate
		\begin{equation}\label{eq discrete laplace and gradiant}
			||\Delta^{h}\varphi_{h}||_{\mathbb{L}^{2}} \lesssim h^{-1}||\nabla \varphi_{h}||_{\mathbb{L}^{2}},\ \  \forall \varphi_{h} \in \mathbb{V}_{h}.
		\end{equation}
		This can be illustrated through identity~\eqref{def discrete Laplace}, the inverse estimate~\eqref{eq inverse estimate} and the Cauchy-Schwarz inequality. Since $\{\mathcal{V}^{-}_{k,h}\}_{k,h}$ is bounded in the Hilbert space $L^{2}(\Omega; L^{2}(0,T; \mathbb{L}^{2}))$ , we let $\varphi \in [C_{c}^{\infty}(D)]^{d}$. Subsequently, $$\mathbb{E}\left[\int_{0}^{T}\left(\mathcal{V}^{-}_{k,h}, \varphi\right)dt\right] = \mathbb{E}\left[\int_{0}^{T}\left(\mathcal{V}^{-}_{k,h} - \mathcal{V}^{+}_{k,h}, \varphi\right)dt\right] + \mathbb{E}\left[\int_{0}^{T}\left(\mathcal{V}^{+}_{k,h}, \varphi\right)dt\right] \coloneqq \zeta_{1} + \zeta_{2}.$$ Using \eqref{eq U convergence k-h}$_{2}$, we get the convergence of $\zeta_{2}$. It remains to show that $\zeta_{1}$ vanishes on the limits. Indeed, by Lemmas~\ref{lemma discrete diff filter Hh}-$(i)$, \ref{lemma3.3.1}-$(ii)$, the Cauchy-Schwarz inequality and estimate~\eqref{eq discrete laplace and gradiant}, we get $\displaystyle \zeta_{1} \leq ||\varphi||_{\mathbb{L}^{2}}\mathbb{E}\left[k\sum_{m=1}^{M}(||U^{m} - U^{m-1}||_{\mathbb{L}^{2}} + \alpha^{2}||\Delta^{h}(U^{m} - U^{m-1})||_{\mathbb{L}^{2}})\right] \lesssim ||\varphi||_{\mathbb{L}^{2}}C_{T}k + ||\varphi||_{\mathbb{L}^{2}}\alpha h^{-1}kC_{T} \leq ||\varphi||_{\mathbb{L}^{2}}C_{T}(k + \alpha^{2}k^{1/2}) \xrightarrow[k,h \to 0]{} 0.$
	\end{proof}
	
	Propositions~\ref{prop limits of U and V} and \ref{prop convergence U k-h} give an insight into the limiting functions spaces. Yet, they do not provide the divergence-free property and the relationship between $u$ and $v$. The following proposition treats this issue.
	\begin{prop}\label{prop divergence-free}\ \\
		Let $u, v$ and $u_{\alpha}$ be the limiting functions provided in Propositions~\ref{prop limits of U and V} and \ref{prop convergence U k-h}. Then, $u$ and $u_{\alpha}$ are divergence-free, and $v = u$ almost everywhere in $(0,T)\times D$ and $\mathbb{P}$-almost surely.
	\end{prop}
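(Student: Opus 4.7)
The plan is to establish the three assertions in turn, using the discrete divergence-free conditions enforced by Algorithm~\ref{Algorithm} together with the pointwise Stokes identity from Lemma~\ref{lemma discrete diff filter Hh}-(i).

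First I would treat the divergence-free property of $u$ (and of $u_\alpha$, which is completely analogous). The third equation of Algorithm~\ref{Algorithm} gives $(div\,U^m,q_h)=0$ for every $q_h\in L_h$, hence $(div\,\mathcal{U}^+_{k,h},q_h)=0$ almost everywhere in $(0,T)\times\Omega$. Given any smooth $q\in L^2_0(D)$, I would select $q_h\in L_h$ (for instance via the $L^2$-orthogonal projection onto $L_h$) so that $\|q-q_h\|_{L^2}\to 0$ as $h\to 0$; such an approximation property is standard for the conforming pressure spaces that satisfy the LBB condition~\eqref{eq LBB condition}. For an arbitrary $\varphi\in L^\infty(\Omega\times(0,T))$ one writes
\begin{equation*}
\mathbb{E}\bigg[\int_0^T\bigl(div\,\mathcal{U}^+_{k,h},q\bigr)\varphi\,dt\bigg]=\mathbb{E}\bigg[\int_0^T\bigl(div\,\mathcal{U}^+_{k,h},q-q_h\bigr)\varphi\,dt\bigg],
\end{equation*}
and the right-hand side is bounded by $\|\varphi\|_{L^\infty}\,\|div\,\mathcal{U}^+_{k,h}\|_{L^2(\Omega;L^2(0,T;L^2))}\,\|q-q_h\|_{L^2}$, which vanishes because the second factor is uniformly bounded thanks to Lemma~\ref{lemma3.3.1}-(ii). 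Meanwhile, the weak convergence $\mathcal{U}^+_{k,h}\rightharpoonup u$ in $L^2(\Omega;L^2(0,T;\mathbb{H}^1_0))$ from Proposition~\ref{prop limits of U and V} (respectively the strong one from Proposition~\ref{prop convergence U k-h} in the case of $u_\alpha$) implies that the left-hand side converges to $\mathbb{E}[\int_0^T(div\,u,q)\varphi\,dt]$. Hence this quantity equals zero for every admissible $q$ and $\varphi$, and density of smooth functions in $L^2_0(D)$ yields $div\,u=0$ almost everywhere; the same reasoning applied with $\mathcal{U}^+_{k,h}\to u_\alpha$ gives $div\,u_\alpha=0$.

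Second, to identify $v$ with $u$ in the regime $\alpha\leq\mathcal{C}h$, I would appeal directly to Lemma~\ref{lemma discrete diff filter Hh}-(i), which provides the pointwise identity $V^m=U^m-\alpha^2\Delta^h U^m$ for each $m$, and therefore
\begin{equation*}
\mathcal{V}^+_{k,h}-\mathcal{U}^+_{k,h}=-\alpha^2\Delta^h\mathcal{U}^+_{k,h}\quad\text{a.e. in }(0,T)\times D,\ \mathbb{P}\text{-a.s.}
\end{equation*}
Taking the $L^2(\Omega;L^2(0,T;\mathbb{L}^2))$-norm and invoking Lemma~\ref{lemma3.3.1}-(ii) gives
\begin{equation*}
\mathbb{E}\bigg[\int_0^T\bigl\|\alpha^2\Delta^h\mathcal{U}^+_{k,h}\bigr\|^2_{\mathbb{L}^2}dt\bigg]=\alpha^2\,\mathbb{E}\bigg[k\sum_{m=1}^M\alpha^2\|\Delta^h U^m\|^2_{\mathbb{L}^2}\bigg]\leq\frac{2C_T\alpha^2}{\nu},
\end{equation*}
which tends to $0$ since $\alpha\leq\mathcal{C}h\to 0$. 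Consequently $\mathcal{V}^+_{k,h}-\mathcal{U}^+_{k,h}\to 0$ strongly in $L^2(\Omega;L^2(0,T;\mathbb{L}^2))$. Combining this with the strong convergences $\mathcal{U}^+_{k,h}\to u$ and $\mathcal{V}^+_{k,h}\to v$ in the same space established in Proposition~\ref{prop limits of U and V}, the uniqueness of strong limits forces $v=u$ almost everywhere in $(0,T)\times D$ and $\mathbb{P}$-almost surely.

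The only delicate point is the selection of the approximants $q_h\in L_h$ with $\|q-q_h\|_{L^2}\to 0$, but this is a standard property of conforming finite element pressure spaces and is implicit in the setup. Everything else is a direct consequence of the uniform a priori estimates in Lemma~\ref{lemma3.3.1} and of the convergences already established in Propositions~\ref{prop limits of U and V} and~\ref{prop convergence U k-h}, so no further compactness argument is required.
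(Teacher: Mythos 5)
Your proof is correct and follows essentially the same route as the paper: you pass the discrete weak divergence-free condition to the limit by approximating pressure test functions with elements of $L_{h}$ and invoking the uniform gradient bound of Lemma~\ref{lemma3.3.1}-$(ii)$, and you identify $v=u$ through the identity $\mathcal{V}^{+}_{k,h}-\mathcal{U}^{+}_{k,h}=-\alpha^{2}\Delta^{h}\mathcal{U}^{+}_{k,h}$ combined with the same a priori estimate and the hypothesis $\alpha\leq\mathcal{C}h$. The only cosmetic deviations are that the paper approximates smooth test functions via the Lagrange interpolant into $L_{h}$ rather than the $L^{2}$-projection, and states the vanishing of $\alpha^{2}\Delta^{h}\mathcal{U}^{+}_{k,h}$ as a weak limit, whereas you obtain it strongly; both are equally valid.
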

	\begin{proof}
		To prove that $u$ and $u_{\alpha}$ are divergence-free, we show that $\{div \ \mathcal{U}^{+}_{k,h}\}_{k,h}$ converges weakly in $L^{2}(\Omega; L^{2}(0,T; \mathbb{L}^{2}))$ toward $0$, thanks to \eqref{eq U convergence}$_{1}$ and \eqref{eq U convergence k-h}$_{1}$. To this end, we evoke the Lagrange interpolation $\mathcal{I} \colon C^{\ell}(D) \to L_{h}$, $\ell \in \mathbb{N}$ (c.f. \cite[Theorem 4.4.4]{brenner2007mathematical}). For $\ell\in\mathbb{N}$, let $z \in C^{\ell}(D)$, then 
		\begin{align*}
			\mathbb{E}\left[\int_{0}^{T}\left(div\mathcal{U}^{+}_{k,h}, z\right)dt\right] &= \mathbb{E}\left[\int_{0}^{T}\left(div\mathcal{U}^{+}_{k,h}, z - \mathcal{I}z\right)dt\right] + \mathbb{E}\left[ \int_{0}^{T}\left(div\mathcal{U}^{+}_{k,h}, \mathcal{I}z\right)dt\right] \\&\lesssim \mathbb{E}\left[\int_{0}^{T}||\nabla \mathcal{U}^{+}_{k,h}||_{L^{2}}dt\right]||z - \mathcal{I}z||_{L^{2}} \xrightarrow[k,h \to 0]{}0,
		\end{align*}
		where the second term in the first equality vanishes because $\{\mathcal{U}^{+}_{k,h}\}_{k,h}$ is weakly divergence-free. Consider $\varphi \in \mathbb{L}^{2}$. By Lemma~\ref{lemma3.3.1}-$(ii)$, the embedding $L^{2} \hookrightarrow L^{1}$, and $\alpha \leq \mathcal{C}h$, one gets $\left|\mathbb{E}\left[\int_{0}^{T}\left(\alpha^{2}\Delta^{h}\mathcal{U}_{k,h}^{+}, \varphi\right)dt\right]\right| \lesssim C_{T}\mathcal{C}h||\varphi||_{\mathbb{L}^{2}} \to 0$. Therefore, $\alpha^{2}\Delta^{h}\mathcal{U}_{k,h}^{+} \rightharpoonup 0$ in $L^{2}(\Omega; L^{2}(0,T; \mathbb{L}^{2}))$ if $\alpha \leq \mathcal{C}h$. By Lemma~\ref{lemma discrete diff filter Hh}-$(i)$ and Proposition~\ref{prop limits of U and V}, we infer that $u = v, \ \mathbb{P}$-almost surely and almost everywhere in $(0,T)\times D$.
	\end{proof}
	
	In the below proposition, we identify the limits of deterministic and stochastic integrals of Algorithm~\ref{Algorithm}.
	\begin{prop}\label{prop limits identification}\ \\
		Let $t \in [0,T]$. For all $\varphi \in \mathbb{H}^{2}\cap\mathbb{V}$,
		\begin{enumerate}
			\item if $0 < \alpha \leq \mathcal{C}h$, for some $\mathcal{C}>0$ independent of $\alpha, h$ and $k$, then
			\begin{enumerate}
				\item[(i)] $\displaystyle \lim\limits_{k,h \to 0}\mathbb{E}\left[\int_{0}^{t}\left(\nabla \mathcal{V}^{+}_{k,h}, \nabla \Pi_{h}\varphi\right)ds\right] = \mathbb{E}\left[\int_{0}^{t}\left(\nabla v(s), \nabla \varphi\right)ds\right]$,
				\item[(ii)] $\displaystyle \lim\limits_{k,h \to 0} \mathbb{E}\left[\int_{0}^{t}\tilde{b}\left(\mathcal{U}_{k,h}^{+}, \mathcal{V}_{k,h}^{-}, \Pi_{h}\varphi\right)ds\right] = \mathbb{E}\left[\int_{0}^{t}\left([v(s)\cdot\nabla]v(s),\varphi\right)ds\right]$,
				\item[(iii)] $\displaystyle \lim\limits_{k,h \to 0}\mathbb{E}\left[\int_{0}^{t}\big\langle f^{-}(s, \mathcal{U}^{-}_{k,h}), \Pi_{h}\varphi \big\rangle ds\right] = \mathbb{E}\left[\int_{0}^{t}\big\langle f(s, v(s)), \varphi \big\rangle ds\right]$,
				\item[(iv)] $\displaystyle \lim\limits_{k,h \to 0} \Big(\int_{0}^{t}g^{-}\big(s, \mathcal{U}_{k,h}^{-}\big)dW(s), \Pi_{h}\varphi\Big) = \Big(\int_{0}^{t}g(s,v(s))dW(s), \varphi\Big), \mathbb{P}\mbox{-}a.s.$,
			\end{enumerate}
			\item if $\frac{\sqrt{k}}{h} < L \leq \alpha$, for some $L>0$  independent of $\alpha, h$ and $k$, then
			\begin{enumerate}
				\item[(i)] $\displaystyle \displaystyle \lim\limits_{k,h \to 0} \mathbb{E}\left[\int_{0}^{t}\tilde{b}\left(\mathcal{U}_{k,h}^{+}, \mathcal{V}_{k,h}^{-}, \Pi_{h}\varphi\right)ds\right] = \mathbb{E}\left[\int_{0}^{t}\tilde{b}(u_{\alpha}(s), v_{\alpha}(s), \varphi)ds\right]$,
				\item[(ii)] Assertions $1.(iii)$-$(iv)$ remain valid, provided that $v$ is replaced by $u_{\alpha}$.
			\end{enumerate}
		\end{enumerate}
	\end{prop}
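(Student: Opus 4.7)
The plan is to identify each term on the left-hand side of the claimed limits with its target by combining the convergences from Propositions~\ref{prop limits of U and V} and \ref{prop convergence U k-h}, the approximation $\Pi_{h}\varphi\to\varphi$ in $\mathbb{H}^{1}$ (a consequence of \eqref{eqprojection} applied to $\varphi\in\mathbb{H}^{2}\cap\mathbb{V}$), and the uniform a priori bounds of Lemmas~\ref{lemma3.3.1}, \ref{lemma V a priori estimates} and \ref{lemma a priori estimate V alpha indep.}. In each identification I would systematically split $\Pi_{h}\varphi=\varphi+(\Pi_{h}\varphi-\varphi)$: the second piece decays at rate $h$ in $\mathbb{H}^{1}$ and is absorbed by uniform energy bounds, while the first piece is passed through the weak or strong convergences. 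Note that $\nabla\varphi\in\mathbb{L}^{\infty}$ in $d=2$ by $\mathbb{H}^{2}\hookrightarrow\mathbb{L}^{\infty}$, and a density argument extends the conclusion to the class $\mathbb{H}^{2}\cap\mathbb{V}$ stated in the proposition.

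For item $1.(i)$ the weak convergence $\nabla\mathcal{V}^{+}_{k,h}\rightharpoonup\nabla v$ in $L^{2}(\Omega;L^{2}(0,T;\mathbb{L}^{2}))$ paired with the strong limit $\nabla\Pi_{h}\varphi\to\nabla\varphi$ closes the identification immediately. For items $1.(iii)$ and $2.(ii)$ concerning $f$, I would exploit the continuity in time of $f$ supplied by $(S_{2})$ to control $f^{-}(s,\cdot)-f(s,\cdot)$, and then use the Lipschitz estimate in the second argument together with the strong convergence of $\mathcal{U}^{-}_{k,h}$ in the $\|\cdot\|_{\alpha}$-topology. In case~1 this last convergence follows from Proposition~\ref{prop limits of U and V} since $\alpha\leq\mathcal{C}h$ forces the $\alpha^{2}\|\nabla\cdot\|_{\mathbb{L}^{2}}^{2}$-part to vanish via Lemma~\ref{lemma3.3.1}-(ii); in case~2, $\|\cdot\|_{\alpha}$ is equivalent to $\|\cdot\|_{\mathbb{H}^{1}}$ and Proposition~\ref{prop convergence U k-h} supplies the required strong $\mathbb{H}^{1}$-convergence (inherited by $\mathcal{U}^{-}_{k,h}$ from $\mathcal{U}^{+}_{k,h}$ because $\mathbb{E}\int_{0}^{T}\|\mathcal{U}^{+}_{k,h}-\mathcal{U}^{-}_{k,h}\|_{\alpha}^{2}dt\leq 4kC_{T}$ by Lemma~\ref{lemma3.3.1}). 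The stochastic assertion $1.(iv)$ is handled analogously after invoking the It\^{o} isometry in $\mathscr{L}_{2}(K,\mathbb{L}^{2})$ together with the Lipschitz estimate on $g$; $L^{2}(\Omega;\mathbb{R})$-convergence of the scalar integrand then yields $\mathbb{P}$-a.s.\ convergence along a subsequence.

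The main obstacle is the trilinear term. For case $1.(ii)$, after the preliminary reduction $\Pi_{h}\varphi\leadsto\varphi$ (controlled by Proposition~\ref{prop trilinear form}-(iii) multiplied by $\|\Pi_{h}\varphi-\varphi\|_{\mathbb{H}^{1}}=O(h)$ and the uniform $L^{2}(\Omega;L^{2}(0,T;\mathbb{H}^{1}))$-bounds on $\mathcal{U}^{+}_{k,h}$ and $\mathcal{V}^{-}_{k,h}$), I would integrate by parts inside $\tilde{b}(\mathcal{U}^{+}_{k,h},\mathcal{V}^{-}_{k,h},\varphi)$ so that every spatial derivative sits on the smooth divergence-free $\varphi$, leaving only quadratic products of $\mathcal{U}^{+}_{k,h}$ and $\mathcal{V}^{-}_{k,h}$ against bounded data, together with a remainder driven by $\mathrm{div}\,\mathcal{U}^{+}_{k,h}$. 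This remainder vanishes in the limit because the weak divergence-free condition of $\mathbb{V}_{h}$ combined with the Lagrange-interpolation argument of Proposition~\ref{prop divergence-free} gives $\mathrm{div}\,\mathcal{U}^{+}_{k,h}\rightharpoonup\mathrm{div}\,u=0$ in $L^{2}(\Omega;L^{2}(0,T;\mathbb{L}^{2}))$. The strong $L^{2}(\Omega;L^{2}(0,T;\mathbb{L}^{2}))$-convergences of $\mathcal{U}^{+}_{k,h}$ and $\mathcal{V}^{-}_{k,h}$ toward $v=u$ (Proposition~\ref{prop divergence-free}) then pass the remaining products to the limit, and one final integration by parts using $\mathrm{div}\,v=\mathrm{div}\,\varphi=0$ recovers $([v\cdot\nabla]v,\varphi)$.

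For case $2.(i)$ the difficulty is sharper since only weak $L^{2}$-convergence is available for $\mathcal{V}^{-}_{k,h}$. I would decompose $\tilde{b}(\mathcal{U}^{+}_{k,h},\mathcal{V}^{-}_{k,h},\Pi_{h}\varphi)=\langle[\mathcal{U}^{+}_{k,h}\cdot\nabla]\mathcal{V}^{-}_{k,h},\Pi_{h}\varphi\rangle+\langle(\nabla\mathcal{U}^{+}_{k,h})^{T}\mathcal{V}^{-}_{k,h},\Pi_{h}\varphi\rangle$ and move every derivative off $\mathcal{V}^{-}_{k,h}$: one integration by parts on the first summand yields $-\langle\mathcal{V}^{-}_{k,h},[\mathcal{U}^{+}_{k,h}\cdot\nabla]\Pi_{h}\varphi\rangle$ modulo a $\mathrm{div}\,\mathcal{U}^{+}_{k,h}$-correction that vanishes exactly as above, while the second summand rewrites directly as $\langle\mathcal{V}^{-}_{k,h},[\Pi_{h}\varphi\cdot\nabla]\mathcal{U}^{+}_{k,h}\rangle$. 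Both are now weak-strong pairings: the strong factor converges in $L^{2}(\Omega;L^{2}(0,T;\mathbb{L}^{2}))$ thanks to the strong $\mathbb{H}^{1}$-convergence $\mathcal{U}^{+}_{k,h}\to u_{\alpha}$ from Proposition~\ref{prop convergence U k-h} combined with the $\mathbb{L}^{\infty}$-boundedness of $\nabla\Pi_{h}\varphi$, while $\mathcal{V}^{-}_{k,h}\rightharpoonup v_{\alpha}$ weakly. Reintegrating by parts in the limiting expression and invoking the divergence-free property of $u_{\alpha}$ (obtained as in Proposition~\ref{prop divergence-free}) reassembles exactly $\tilde{b}(u_{\alpha},v_{\alpha},\varphi)$, completing the identification.
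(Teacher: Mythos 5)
Your treatment of $1.(i)$, $1.(iii)$, $1.(iv)$ and of $2.(ii)$ is essentially the paper's own argument (split $\Pi_{h}\varphi=\varphi+(\Pi_{h}\varphi-\varphi)$ with \eqref{eqprojection}, weak--strong pairings, time-continuity and Lipschitz bounds from $(S_{2})$, It\^{o} isometry, and transfer from $\mathcal{U}^{+}_{k,h}$ to $\mathcal{U}^{-}_{k,h}$ via Lemma~\ref{lemma3.3.1}-(ii)), and your case~$1.(ii)$ integration-by-parts strategy with the $\mathrm{div}\,\mathcal{U}^{+}_{k,h}$-remainder matches the paper's $J_{1},J_{2}$ decomposition; one small slip there is harmless: $\mathbb{H}^{2}\hookrightarrow\mathbb{L}^{\infty}$ controls $\varphi$, not $\nabla\varphi$, but in case~1 only $\nabla\varphi\in\mathbb{L}^{4}$ is needed, which $\mathbb{H}^{1}\hookrightarrow\mathbb{L}^{4}$ supplies.

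The genuine gap is in case~$2.(i)$, where only the weak $\mathbb{L}^{2}$-convergence of $\mathcal{V}^{-}_{k,h}$ and the $\mathbb{L}^{2}$-bound of Lemma~\ref{lemma a priori estimate V alpha indep.} are available. First, your weak--strong pairing rests on the ``$\mathbb{L}^{\infty}$-boundedness of $\nabla\Pi_{h}\varphi$'' (equivalently $\nabla\varphi\in\mathbb{L}^{\infty}$), which is false in general: $\nabla\varphi\in\mathbb{H}^{1}$ only gives $\mathbb{L}^{p}$, $p\leq6$, and \eqref{eqprojection} gives no uniform $\mathbb{W}^{1,\infty}$ control of $\Pi_{h}\varphi$; consequently $[\Pi_{h}\varphi\cdot\nabla]\mathcal{U}^{+}_{k,h}$ and $[\mathcal{U}^{+}_{k,h}\cdot\nabla]\Pi_{h}\varphi$ are not shown to converge strongly in $L^{2}(\Omega;L^{2}(0,T;\mathbb{L}^{2}))$, which is exactly what pairing with the merely weakly convergent $\mathcal{V}^{-}_{k,h}$ requires. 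The paper avoids this by first replacing $\Pi_{h}\varphi$ by $\varphi$, after which $\varphi\in\mathbb{L}^{\infty}$ and $\nabla\varphi\in\mathbb{L}^{3}$ suffice. Second, that very replacement is where your ``absorbed by uniform energy bounds'' breaks down in case~2: bounding $\tilde{b}(\mathcal{U}^{+}_{k,h},\mathcal{V}^{-}_{k,h},\Pi_{h}\varphi-\varphi)$ by Proposition~\ref{prop trilinear form}-(iii) needs $\|\nabla\mathcal{V}^{-}_{k,h}\|_{\mathbb{L}^{2}}$, which is not uniformly bounded here; the inverse inequality \eqref{eq inverse estimate} costs a factor $h^{-1}$ that exactly cancels the $O(h)$ gain of \eqref{eqprojection}, so no smallness remains. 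The paper closes this step by invoking the standing hypothesis $\sqrt{k}/h<L\leq\alpha$ (in effect $k\lesssim\alpha h\sqrt{k}$) together with Lemmas~\ref{lemma3.3.1} and \ref{lemma a priori estimate V alpha indep.}; this ingredient is absent from your sketch and without it the reduction does not go through. Third, your claim that the $\mathrm{div}\,\mathcal{U}^{+}_{k,h}$-correction ``vanishes exactly as above'' is not right in case~2: the case-1 mechanism used the strong $\mathbb{L}^{2}$-convergence of $\mathcal{V}^{-}_{k,h}$ (plus $\mathrm{div}\,\mathcal{U}^{+}_{k,h}\rightharpoonup0$), which you do not have here; the correct substitute is that the strong $\mathbb{H}^{1}_{0}$-convergence of $\mathcal{U}^{+}_{k,h}$ in Proposition~\ref{prop convergence U k-h} and Proposition~\ref{prop divergence-free} give $\mathrm{div}\,\mathcal{U}^{+}_{k,h}\to\mathrm{div}\,u_{\alpha}=0$ strongly in $L^{2}(\Omega;L^{2}(0,T;L^{2}))$, which can then be paired with the weakly convergent $\mathcal{V}^{-}_{k,h}$ and $\varphi\in\mathbb{L}^{\infty}$ --- again only after the $\Pi_{h}\varphi\leadsto\varphi$ reduction. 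With these three repairs your argument would align with the paper's proof; as written, case~$2.(i)$ does not close.
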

	\begin{proof}
		Fix $\varphi$ in $\mathbb{H}^{2}\cap\mathbb{V}$. Starting out with assertion $1$, we have $\{\mathcal{V}^{+}_{k,h}\}_{k,h}$ (resp. $\{\Pi_{h}\varphi\}_{h}$) is weakly (resp. strongly) convergent in $L^{2}(\Omega; L^{2}(0,T; \mathbb{H}_{0}^{1}))$ (resp. $\mathbb{H}_{0}^{1}$), thanks to Proposition~\ref{prop limits of U and V} and estimate~\eqref{eqprojection}. Therefore, $1.(i)$ follows. Moreover, $$\tilde{b}(\mathcal{U}^{+}_{k,h}, \mathcal{V}^{-}_{k,h},\Pi_{h}\varphi) = \left([\mathcal{U}^{+}_{k,h}\cdot\nabla]\mathcal{V}^{-}_{k,h},\Pi_{h}\varphi\right) + \left((\nabla \mathcal{U}^{+}_{k,h})^{T}\cdot\mathcal{V}_{k,h}^{-},\Pi_{h}\varphi\right) \eqqcolon J_{1} + J_{2}.$$ We have $J_{1} = -\left([\mathcal{U}_{k,h}^{+}\cdot\nabla]\Pi_{h}\varphi, \mathcal{V}^{-}_{k,h}\right) - \left([div \ \mathcal{U}^{+}_{k,h}]\mathcal{V}^{-}_{k,h}, \Pi_{h}\varphi\right) \eqqcolon -J^{1}_{1} - J_{1}^{2}$ by integrating by parts. Therefore, by Proposition~\ref{prop trilinear form}-$(iii)$ and estimate~\eqref{eqprojection}, 
		\begin{align*}
			&\left|\mathbb{E}\left[\int_{0}^{t}\left(-J_{1}^{1} + \left([u\cdot\nabla]\varphi, v\right)\right)ds\right]\right| \leq C_{D}Ch||\varphi||_{\mathbb{H}^{2}}\mathbb{E}\left[\int_{0}^{t}||\nabla\mathcal{U}^{+}_{k,h}||_{\mathbb{L}^{2}}||\nabla\mathcal{V}^{-}_{k,h}||_{\mathbb{L}^{2}}ds\right] \\& + \left|\mathbb{E}\left[\int_{0}^{t}\left\{-\left([\mathcal{U}^{+}_{k,h}\cdot\nabla]\varphi, \mathcal{V}^{-}_{k,h}\right) + \left([u\cdot\nabla]\varphi, v\right)\right\}ds\right]\right|,
		\end{align*} 
		The first term on the right hand side tends to $0$ after applying the Cauchy-Shwarz inequality, Lemmas~\ref{lemma3.3.1} and \ref{lemma V a priori estimates}. Similarly, the second term goes to $0$ by virtue of Proposition~\ref{prop limits of U and V}. Moreover, $J_{1}^{2} = \left([div \ \mathcal{U}^{+}_{k,h}]\mathcal{V}^{-}_{k,h}, \Pi_{h}\varphi - \varphi\right) + \left(\mathcal{V}^{-}_{k,h}, [div \ \mathcal{U}^{+}_{k,h}]\varphi\right)$. Thus, $\mathbb{E}\left[\int_{0}^{t}|J_{1}^{2}|ds\right]$ goes to $0$ after applying Hölder's inequality, embedding $\mathbb{H}^{1} \hookrightarrow \mathbb{L}^{4}$, Poincar{\'e}'s inequality, Lemmas~\ref{lemma3.3.1}, \ref{lemma V a priori estimates}, estimate~\eqref{eqprojection},convergence~\eqref{eq U convergence}$_{4}$ and Proposition~\ref{prop divergence-free}. On the other hand, by virtue of Proposition~\ref{prop trilinear form}-$(iv)$, $J_{2}=-\int_{D}[\Pi_{h}\varphi\cdot\nabla]\mathcal{V}^{-}_{k,h}\mathcal{U}^{+}_{k,h}dx$. Thus,
		\begin{align*}
			\left|\mathbb{E}\left[\int_{0}^{t}J_{2}ds\right]\right| &\leq C_{D}Ch||\varphi||_{\mathbb{H}^{2}}\mathbb{E}\left[\int_{0}^{t}||\nabla\mathcal{V}_{k,h}^{-}||_{\mathbb{L}^{2}}||\nabla\mathcal{U}^{+}_{k,h}||_{\mathbb{L}^{2}}ds\right] \\&+ \left|\mathbb{E}\left[\int_{0}^{t}\left([\varphi\cdot\nabla]\mathcal{V}^{-}_{k,h}, \mathcal{U}^{+}_{k,h}\right)ds\right]\right|.
		\end{align*}
		The first term on the right converges toward $0$, thanks to Lemmas~\ref{lemma3.3.1} and \ref{lemma V a priori estimates}. By Proposition~\ref{prop limits of U and V}, the second term goes to $\left|\mathbb{E}\left[\int_{0}^{t}\left([\varphi\cdot\nabla]v,u\right)ds\right]\right|$ which vanishes because $u = v$, $\mathbb{P}$-a.s. and a.e. in $(0,T)\times D$ (see Proposition~\ref{prop divergence-free}) and $\varphi$ is divergence-free. Subsequently, the trilinear term $\tilde{b}$ tends to $-\mathbb{E}\left[\int_{0}^{t}\left([u\cdot\nabla]\varphi,v\right)ds\right]$, which by virtue of Proposition~\ref{prop divergence-free} and an integration by parts, lead to the sought term. For assertion $1.(iii)$, we make use of assumption $(S_{2})$:
		\begin{align*}
			&\left|\langle f^{-}(s, \mathcal{U}^{-}_{k,h}), \Pi_{h}\varphi \rangle - \langle f(s, u(s)), \varphi\rangle\right| \leq (K_{3} + K_{4}||\mathcal{U}^{-}_{k,h}||_{\alpha})||\Pi_{h}\varphi - \varphi||_{\mathbb{H}^{1}} \\&+ ||f(s,u(s)) - f^{-}(s, u(s))||_{\mathbb{H}^{-1}}||\varphi||_{\mathbb{H}^{1}} + L_{f}||u(s) - \mathcal{U}^{-}_{k,h}||_{\alpha}||\varphi||_{\mathbb{H}^{1}} \eqqcolon \mathcal{L}_{1} + \mathcal{L}_{2} + \mathcal{L}_{3}.
		\end{align*}
		We have $\mathbb{E}\left[\int_{0}^{t}\mathcal{L}_{1}ds\right] \leq Ch||\varphi||_{\mathbb{H}^{2}}(TK_{3} + K_{4}C_{T}) \to 0$, thanks to estimate~\eqref{eqprojection} and Lemma~\ref{lemma3.3.1}. Besides, $\mathbb{E}\left[\int_{0}^{t}\mathcal{L}_{2}ds\right] \to 0$, thanks to the continuity of $f$ with respect to $t$. Lastly, we decompose the norm $||\cdot||_{\alpha}$ after using the embedding $\mathbb{L}^{2}\hookrightarrow \mathbb{L}^{1}$: $$\mathbb{E}\left[\int_{0}^{t}\mathcal{L}_{3}dt\right] \lesssim L_{f}||\varphi||_{\mathbb{H}^{1}}\mathbb{E}\left[\int_{0}^{t}(||u(s) - \mathcal{U}^{-}_{k,h}||^{2}_{\mathbb{L}^{2}} + \alpha^{2}||\nabla(u(s) - \mathcal{U}^{-}_{k,h})||^{2}_{\mathbb{L}^{2}})ds\right]^{1/2}.$$ Convergence of $\mathbb{E}\left[\int_{0}^{t}||u(s) - \mathcal{U}^{-}_{k,h}||^{2}_{\mathbb{L}^{2}}ds\right]$ is handled by adding and subtracting $\mathcal{U}^{+}_{k,h}$, then by employing the triangular inequality along with convergence~\eqref{eq U convergence}$_{2}$ and Lemma~\ref{lemma3.3.1}-$(ii)$. Furthermore, $\mathbb{E}\left[\int_{0}^{t}\alpha^{2}||\nabla(u(s) - \mathcal{U}^{-}_{k,h})||^{2}_{\mathbb{L}^{2}}ds\right]^{1/2} \lesssim C_{T}\mathcal{C}h$, due to Lemma~\ref{lemma3.3.1}-$(ii)$, convergence~\eqref{eq U convergence}$_{1}$ and $\alpha \leq \mathcal{C}h$. We now justify assertion~$1.(iv)$. Let us denote by
		\begin{align*}
			&\mathcal{J}\coloneqq\left(\int_{0}^{t}g^{-}\big(s,\mathcal{U}_{k,h}^{-}\big)dW(s), \Pi_{h}\varphi\right) - \left(\int_{0}^{t}g(s,u(s))dW(s), \varphi\right) \\&= \Big(\int_{0}^{t}g^{-}\big(s,\mathcal{U}_{k,h}^{-}\big)dW(s), \Pi_{h}\varphi - \varphi\Big) + \Big(\int_{0}^{t}\left(g^{-}\big(s,\mathcal{U}_{k,h}^{-}\big) - g^{-}(s,u(s))\right)dW(s), \varphi\Big) \\&\hspace{10pt}+ \Big(\int_{0}^{t}\left(g^{-}(s,u(s)) - g(s, u(s))\right)dW(s), \varphi\Big) \eqqcolon \mathcal{J}_{1} + \mathcal{J}_{2} + \mathcal{J}_{3}.
		\end{align*}
		After squaring both sides and applying the mathematical expectation, we shall bound each term separately. To this end, assumption $(S_{2})$, Itô's isometry,the Cauchy-Schwarz inequality, estimate~\eqref{eqprojection}, and Lemma~\ref{lemma3.3.1} are all needed in the calculations below.
		\begin{align*}
			&\mathbb{E}\left[\int_{0}^{t}\mathcal{J}_{1}^{2}ds\right] \lesssim Tr(Q)||\Pi_{h}\varphi - \varphi||_{\mathbb{L}^{2}}^{2}\mathbb{E}\left[\int_{0}^{t}(K_{1}+K_{2}||\mathcal{U}^{-}_{k,h}||_{\alpha})^{2}ds\right] \lesssim h^{4}C_{T} \to 0, \\&
			\mathbb{E}\left[\int_{0}^{t}\mathcal{J}_{2}^{2}ds\right] \lesssim Tr(Q)||\varphi||^{2}_{\mathbb{L}^{2}}L_{g}^{2}\mathbb{E}\left[\int_{0}^{t}||\mathcal{U}^{-}_{k,h} - u(s)||^{2}_{\alpha}ds\right] \to 0 \ \ (\mbox{ similar to } \mathcal{L}_{3}) \mbox{ and, } \\&
			\mathbb{E}\left[\int_{0}^{t}\mathcal{J}_{3}^{2}ds\right] \lesssim Tr(Q)||\varphi||^{2}_{\mathbb{L}^{2}}\mathbb{E}\left[\sum_{m=1}^{M}\int_{t_{m-1}}^{t_{m}}||g(t_{m-1},u) - g(s, u)||^{2}_{\mathscr{L}_{2}(K,\mathbb{L}^{2})}ds\right] \to 0,
		\end{align*}
		by the continuity of $g$ with respect to its first variable. Replacing $u$ by $v$ (Proposition~\ref{prop divergence-free}) completes the proof of $(iv)$. Moving on to assertion $2$. We have
		\begin{align*}
			\tilde{b}(\mathcal{U}^{+}_{k,h}, \mathcal{V}^{-}_{k,h}, \Pi_{h}\varphi) - \tilde{b}(u_{\alpha}, v_{\alpha}, \varphi) &= \tilde{b}(\mathcal{U}^{+}_{k,h}, \mathcal{V}^{-}_{k,h}, \Pi_{h}\varphi - \varphi) + \Big(\tilde{b}(\mathcal{U}^{+}_{k,h}, \mathcal{V}^{-}_{k,h}, \varphi) - \tilde{b}(u_{\alpha}, v_{\alpha}, \varphi)\Big) \\&\eqqcolon I_{1} + I_{2}.
		\end{align*}
		By Proposition~\ref{prop trilinear form}-$(iii)$, estimate~\eqref{eqprojection}, the inverse~\eqref{eq inverse estimate} and sum~\eqref{eq sum estimate} inequalities, Young's inequality, Lemmas~\ref{lemma3.3.1},\ref{lemma a priori estimate V alpha indep.} and $\sqrt{k}/h < \alpha$, one gets
		\begin{align*}
			&\mathbb{E}\left[\int_{0}^{t}|I_{1}|ds\right] \leq \mathbb{E}\left[k\sum_{m=1}^{M}|I_{1}|\right] \lesssim h||\varphi||_{\mathbb{H}^{2}}\mathbb{E}\left[\alpha k^{1/2}\max\limits_{1 \leq m \leq M}||\nabla U^{m}||_{\mathbb{L}^{2}}\sum_{m=1}^{M}||V^{m-1}||_{\mathbb{L}^{2}}\right] \\&\lesssim h||\varphi||_{\mathbb{H}^{2}}\mathbb{E}\left[\max\limits_{1 \leq m \leq M}||U^{m}||_{\alpha}^{2} + 3k\sum_{m=1}^{M}||V^{m-1}||_{\mathbb{L}^{2}}^{2}\right] \leq (C_{T} + C'_{T})h||\varphi||_{\mathbb{H}^{2}} \xrightarrow[k,h \to 0]{} 0.
		\end{align*}
		On the other hand, by employing Proposition~\ref{prop trilinear form}-$(iv)$ and integrating by parts twice, there holds $\tilde{b}(\mathcal{U}^{+}_{k,h}, \mathcal{V}^{-}_{k,h}, \varphi) = \left([\varphi\cdot\nabla]\mathcal{U}^{+}_{k,h}, \mathcal{V}_{k,h}^{-}\right) - \left([\mathcal{U}^{+}_{k,h}\cdot\nabla]\varphi, \mathcal{V}^{-}_{k,h}\right)$. Proposition~\ref{prop convergence U k-h} ensures the convergence $\left|\mathbb{E}\left[\int_{0}^{t}I_{2}ds\right]\right| \xrightarrow[k,h \to 0]{} 0$, while taking into account that $\left([\varphi\cdot\nabla]u_{\alpha}, v_{\alpha}\right) - \left([u_{\alpha}\cdot\nabla]\varphi, v_{\alpha}\right)$ coincides $\mathbb{P}$-a.s. with $\tilde{b}(u_{\alpha}, v_{\alpha}, \varphi)$ after integrating it twice by parts and using the null divergence of $\varphi$ and $u_{\alpha}$ (see Proposition~\ref{prop divergence-free}). We point out that the low regularity of $v_{\alpha}$ (in $\mathbb{L}^{2}$) does not prevent $\tilde{b}(u_{\alpha}, v_{\alpha}. \phi)$ from being well-defined, because $u_{\alpha}$ and $\phi$ have high regularities. The proof of convergence for $f$ and $g$ is similar to that of assertion $1$. One may only need to replace the employed estimate $\alpha \leq \mathcal{C}h$ (when decomposing the norm $||\cdot||_{\alpha}$ in the above steps) by the strong convergence of $\{\mathcal{U}^{+}_{k,h}\}_{k,h}$ in $L^{2}(\Omega; L^{2}(0,T; \mathbb{H}_{0}^{1}))$, thanks to Proposition~\ref{prop convergence U k-h}.
	\end{proof}
	
	\section{Conclusion}\label{section conclusion}
	We devote this section to results and conclusions emerging from what we have carried out so far.
	
	\subsubsection{Convergence of LANS-$\alpha$ to NS in $2$D}
	Assume $d = 2$, and $V^{0} \to v_{0}$ in $L^{2}(\Omega; \mathbb{L}^{2})$ as $h \to 0$. From Propositions~\ref{prop divergence-free} and \ref{prop limits identification}-$(1)$, we infer that $v$ satisfies equation~\eqref{eq def weak formulation NS}, $\mathbb{P}$-a.s., for all $t \in [0,T]$. Moreover, by a standard technique (e.g. \cite{Pardoux1975}), it is easy to check that $v \in L^{2}(\Omega; C([0,T]; \mathbb{H}))$. We also have, by Proposition~\ref{prop limits of U and V}-$(1)$, that $v \in M^{2}_{\mathcal{F}_{t}}(0,T; \mathbb{V})$.
	
	\subsubsection{Convergence to the LANS-$\alpha$}
	Assume $d \in \{2, 3\}$, and $U^{0} \to \bar{u}_{0}$ in $L^{2}(\Omega; \mathbb{H}^{1})$ as $h \to 0$.
	According to Propositions~\ref{prop convergence U k-h} and \ref{prop limits identification}-$(2)$, one may notice that we still need to illustrate the convergence of $\mathbb{E}\left[\int_{0}^{T}\left(\nabla \mathcal{V}^{+}_{k,h}, \nabla\varphi_{h}\right)dt\right]$, $\varphi_{h} \in \mathbb{V}_{h}$ toward its continuous counterpart. To this end, we define, for $z \in \mathbb{H}_{0}^{1}$, the elliptic projection $E_{h} \colon \mathbb{H}_{0}^{1} \to \mathbb{V}_{h}$ as the unique solution of $$\left(\nabla E_{h}z, \nabla \varphi_{h}\right) = \left(\nabla z, \nabla \varphi_{h}\right), \ \ \ \forall \varphi_{h} \in \mathbb{V}_{h}.$$
	Operator $E_{h}$ satisfies $\displaystyle \Delta^{h}E_{h}z = \Pi_{h}\Delta z$ for all $z \in \mathbb{H}^{2}\cap \mathbb{V}$ (e.g. \cite[Page 593]{guermond2008stability}).
	Therefore, for all $\varphi \in \mathbb{H}^{2}\cap \mathbb{V}$, it follows from identities~
	\ref{def projection}, \ref{def discrete Laplace}, Proposition~\ref{prop convergence U k-h} and the above relation that
	\begin{align*}
		\left(\nabla \mathcal{V}^{+}_{k,h}, \nabla E_{h}\varphi\right) = - \left(\mathcal{V}^{+}_{k,h}, \Delta^{h}E_{h}\varphi\right) = -\left(\mathcal{V}^{+}_{k,h}, \Pi_{h}\Delta \varphi\right) = -\left(\mathcal{V}^{+}_{k,h}, \Delta \varphi\right).
	\end{align*}
	Subsequently, $\mathbb{E}\left[\int_{0}^{T}\left(\nabla \mathcal{V}^{+}_{k,h}, \nabla E_{h}\varphi\right)dt\right]$ converges toward $$-\mathbb{E}\left[\int_{0}^{T}\left(v_{\alpha}(t), \Delta\varphi\right)dt\right] = - \mathbb{E}\left[\int_{0}^{T}\left(v_{\alpha}(t), A\varphi\right)dt\right] =  \mathbb{E}\left[\int_{0}^{T}\left( u_{\alpha}(t) + \alpha^{2}Au_{\alpha}(t), A\varphi\right)dt\right].$$ Putting it all together yields the sought result. In other words, $u_{\alpha}$ satisfies equation~\eqref{eq definition solution}, $\mathbb{P}$-a.s., for all $t \in [0,T]$ and $u_{\alpha} \in M^{2}_{\mathcal{F}_{t}}(0,T; D(A)) \cap L^{2}(\Omega; L^{\infty}(0,T; \mathbb{V}))$ together with the fact that $u_{\alpha}$ is weakly continuous with values in $\mathbb{V}$.
	
	\subsection{Numerical experiments}
	This part is devoted to giving computational experiments in $2$D for the stochastic LANS-$\alpha$ model through Algorithm~\ref{Algorithm} when the spatial scale $\alpha$ fulfills either $\alpha \leq Ch$ or $\alpha > L \geq \frac{\sqrt{k}}{h}$. Since our primary objective is to compare solutions' behavior of LANS-$\alpha$ to that of Navier-Stokes, we provide simulation of solutions to the latter equations as well through a linear scheme covered in \cite[Algorithm 3]{brzezniak2013finite}. The implementation hereafter is performed using the open source finite element software FEniCS~\cite{LoggMardalEtAl2012a}. We employ the lower order Taylor-Hood ($P_{2}$-$P_{1}$) element for the spatial discretization within a mixed finite element framework. The chosen domain is a unit square $D = (0,1)^{2}$ along the time interval $[0, T]$ with $T=1$. The initial condition $\bar{u}_{0}$ and viscosity $\nu$ are set to $0$ and $1$, respectively. For the sake of simplicity, the source term $f$ is considered as a deterministic constant and the drift term $g$ plays the identity operator role.
	\paragraph{\underline{Q-Wiener process approximation}} For computational purposes, we must deal with a truncated form of the series \eqref{def Wiener process}. Besides, we consider two independent $H_{0}^{1}(D)$-valued Wiener processes $W_{1}$ and $W_{2}$ such that $W = (W_{1}, W_{2})$. For $M \in \mathbb{N}\backslash \{0\}$, the utilized increments are expressed by $$\Delta_{m}W_{\ell} \approx k^{1/2}\sum_{i,j=1}^{M}(\lambda_{i,j}^{\ell})^{1/2}\xi_{i,j}^{\ell, m}e_{i,j}, \ \ \ \ell \in \{1, 2\},$$ where for all $i,j \in \mathbb{N}$ and $(x, y) \in D$, the basis elements $e_{i,j} \coloneqq 2\sin(i\pi x)\sin(j\pi y)$ represent the Laplace eigenfunctions with Dirichlet boundary conditions on $D$. For $\ell \in \{1,2\}$, $\{\{\xi_{i,j}^{\ell, m}\}_{i, j}\}_{m}$ is a family of independent identically distributed standard normal random variables, $\displaystyle\lambda_{i,j}^{\ell} \coloneqq \frac{1}{(i + j)^{2}}$ for $\ell \in \{1, 2\}$ and $M = 10$.
	
	\paragraph{Case $\alpha \leq Ch$}\ \\
	Consider $\alpha = 10^{-3}h$, $h \approx 0.03$ and $k = 10^{-3}$. 
	\begin{figure}[ht]
		\begin{subfigure}[b]{0.5\linewidth}
			\centering
			\includegraphics[width=\linewidth]{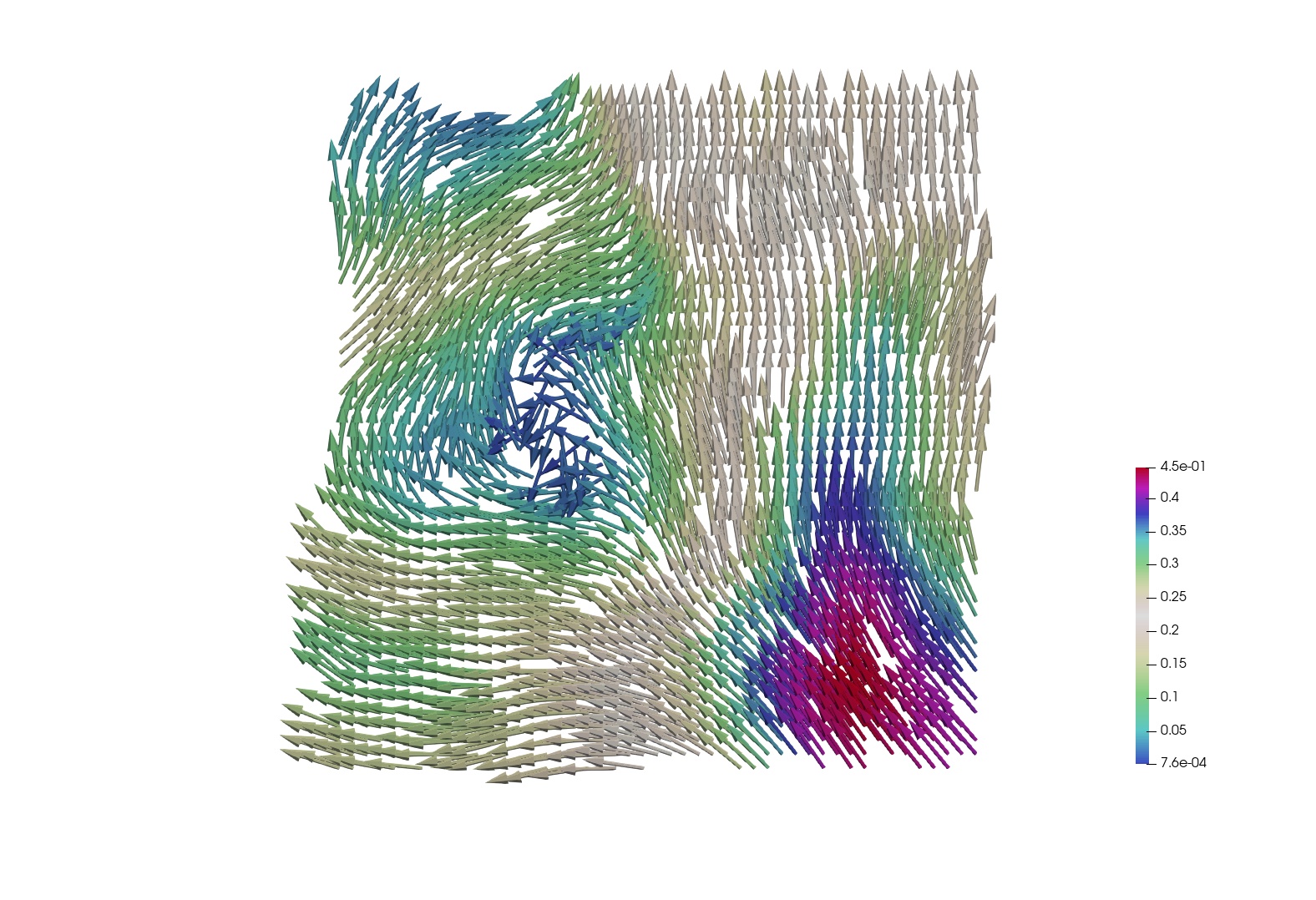}
			\caption{Velocity field of LANS-$\alpha$ at time $t = 0.161$}
		\end{subfigure}
		\begin{subfigure}[b]{0.5\linewidth}
			\centering
			\includegraphics[width=\linewidth]{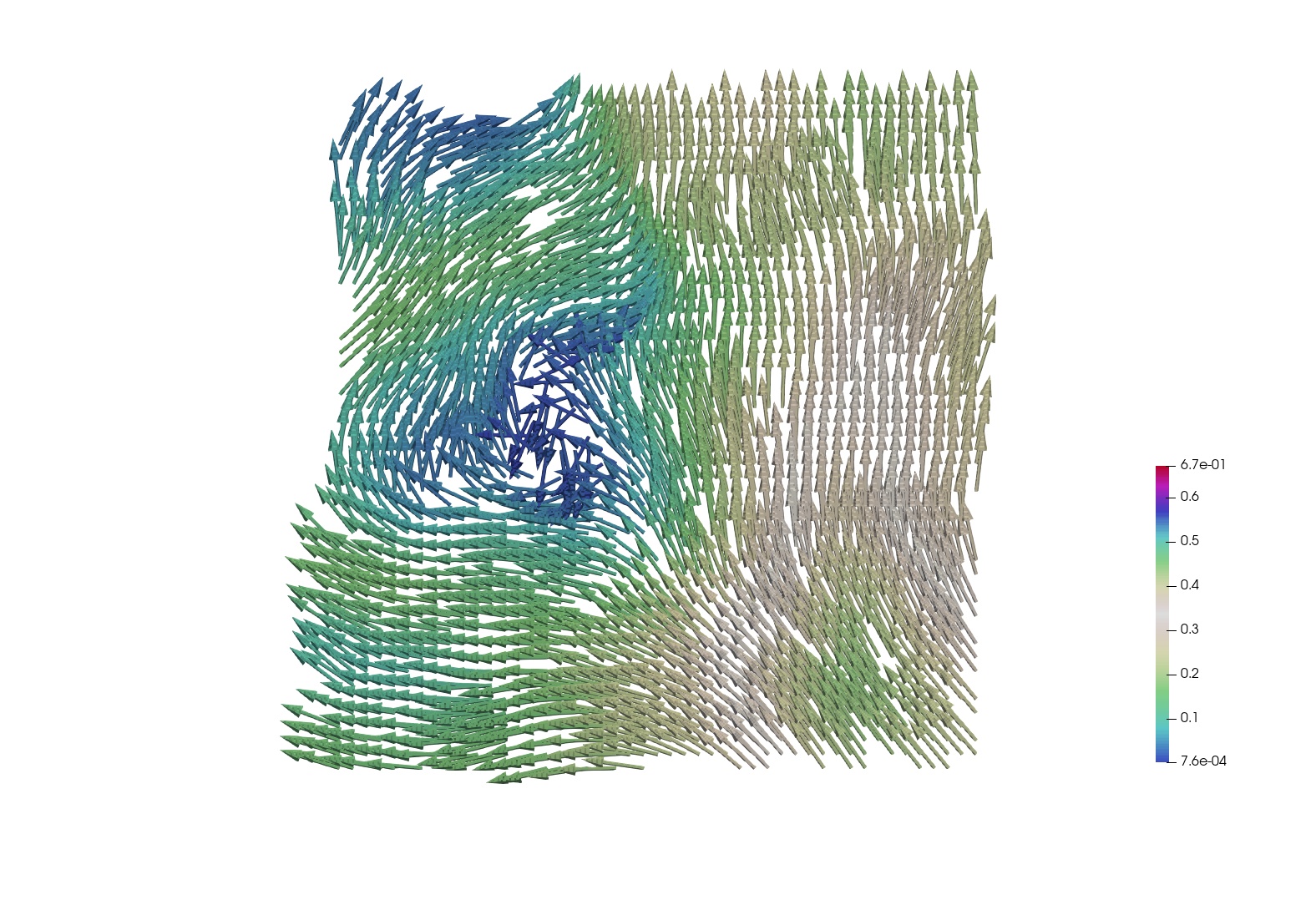}
			\caption{Velocity field of NS at time $t = 0.161$}
		\end{subfigure}
		\begin{subfigure}{0.5\linewidth}
			\centering
			\includegraphics[width=\linewidth]{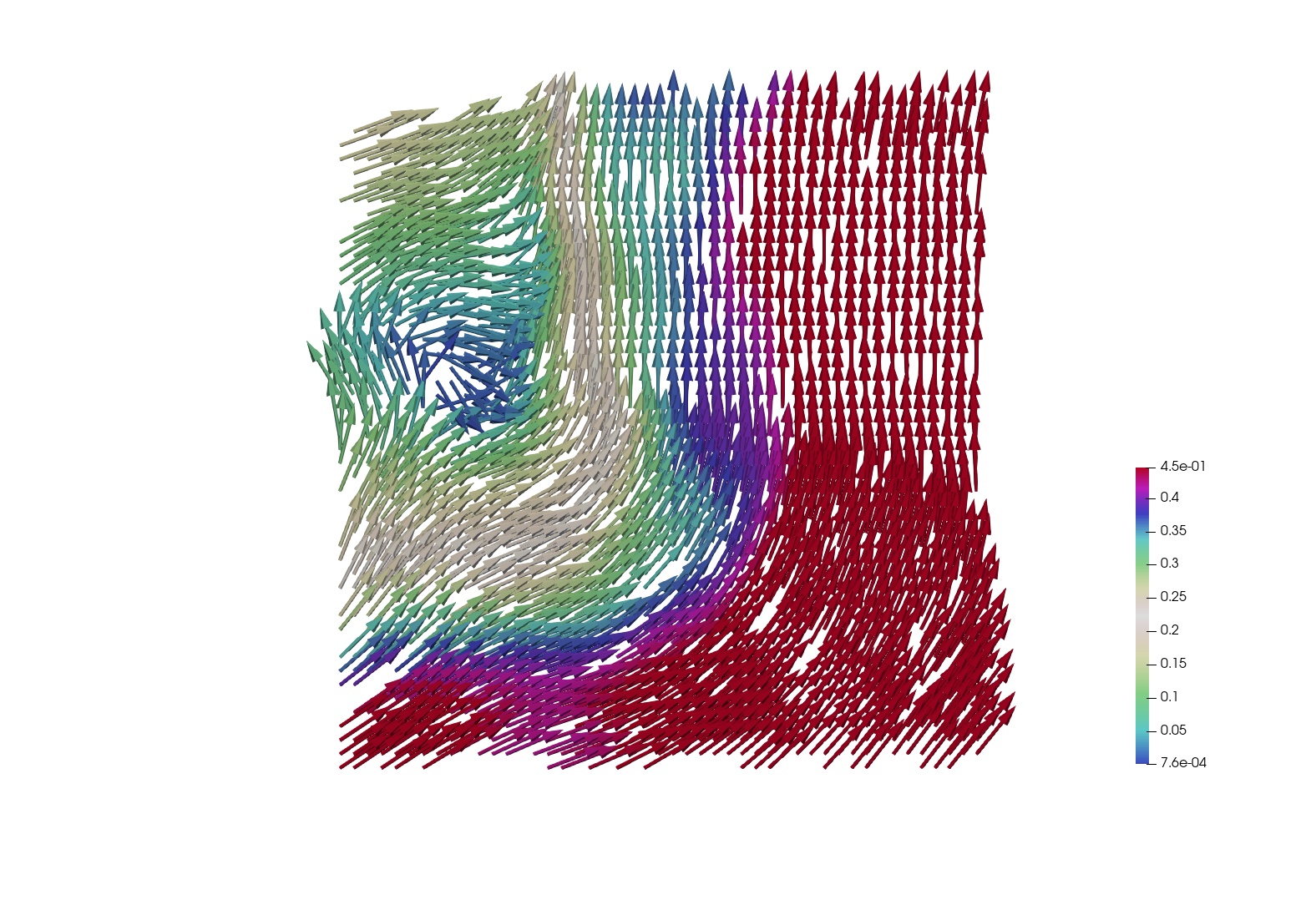}\
			\caption{Velocity field of LANS-$\alpha$ at time $t = 0.586$}
		\end{subfigure}%
		\begin{subfigure}{0.5\linewidth}
			\centering	\includegraphics[width=\linewidth]{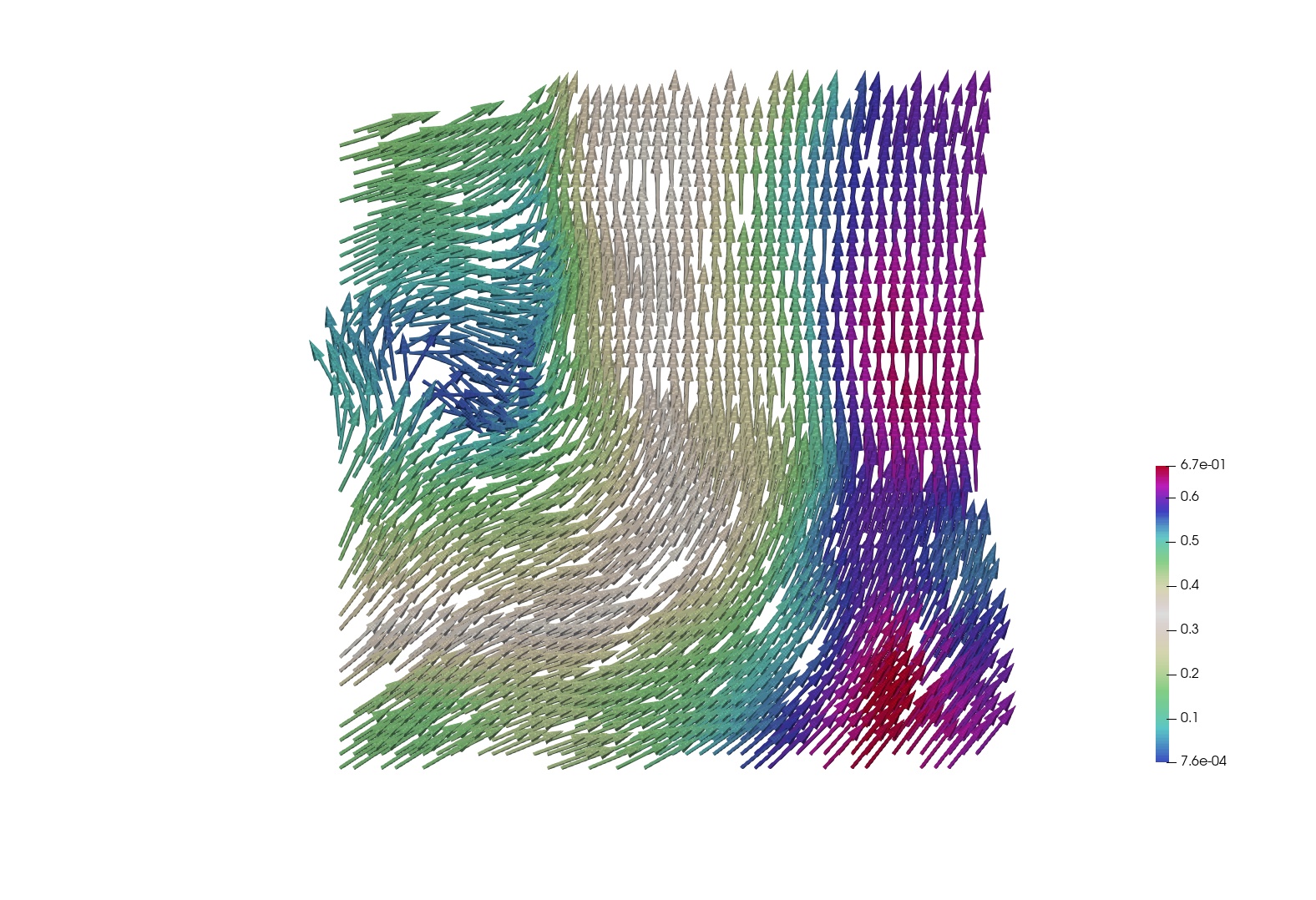}
			\caption{Velocity field of NS at time $t = 0.586$}
		\end{subfigure}
	\end{figure}
	
	Since this case relates both equations~\eqref{main equation} and \eqref{Navier-Stokes}, we choose two different time values in $[0,T]$, and plot the associated figures side by side. This allows us to compare the solutions' behavior together with the occurring differences. Observe that both LANS-$\alpha$ and NS solutions behave similarly with a slight variation in values. Such a difference was expected since we are dealing here with approximate computations, not to mention the considered space discretization's step $h$ which is not too close to $0$, yet its code execution is costly. We also provide the following pressure figures
	\begin{figure}[ht]
		\centering
		\begin{subfigure}{0.4\linewidth}
			\centering
			\includegraphics[width=\linewidth]{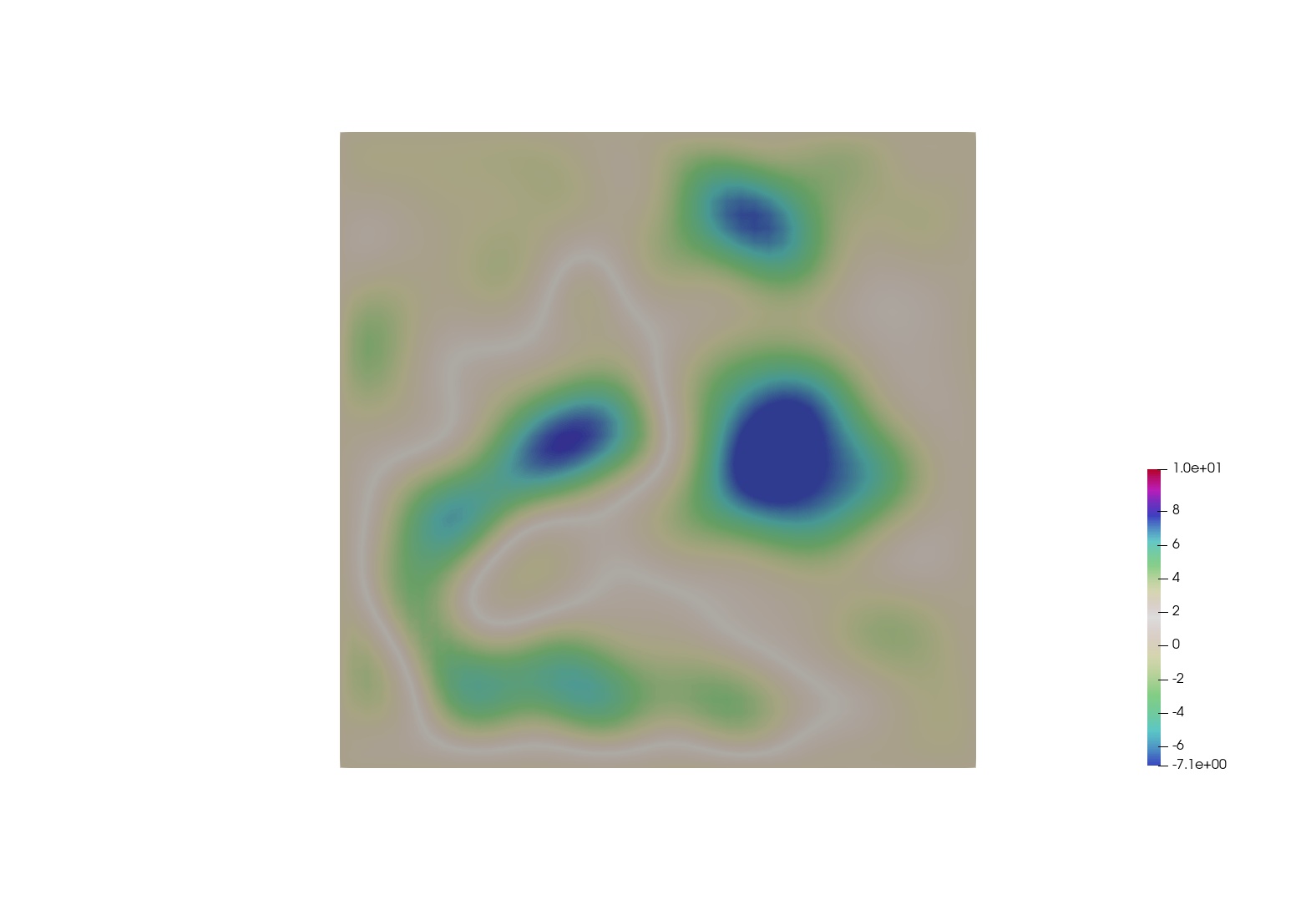}
			\caption{LANS-$\alpha$ pressure at $t=0.161$}
		\end{subfigure}
		\begin{subfigure}{0.4\linewidth}
			\centering
			\includegraphics[width=\linewidth]{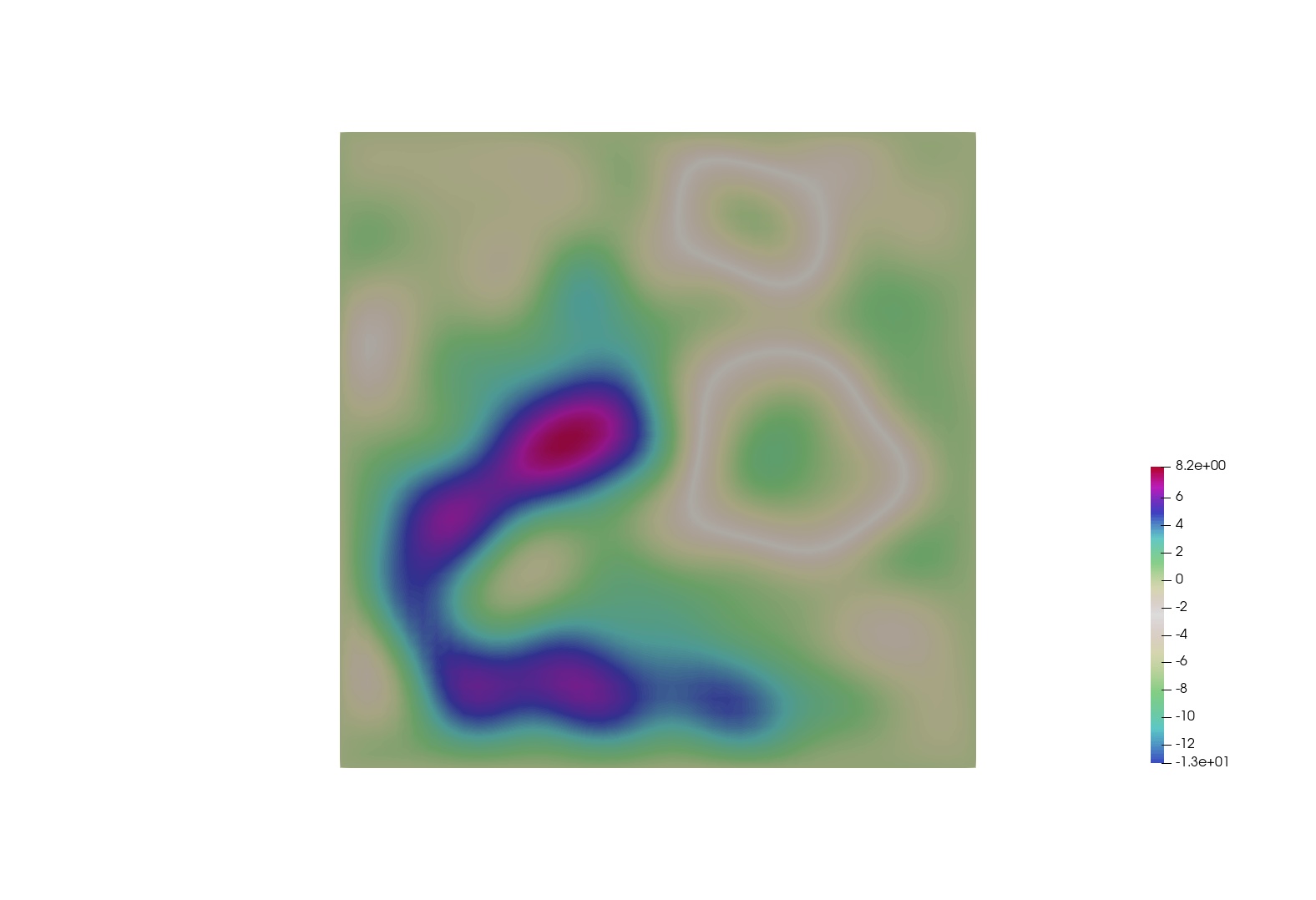}
			\caption{NS pressure at $t = 0.161$}
		\end{subfigure}
		\begin{subfigure}{0.4\linewidth}
			\centering
			\includegraphics[width=\linewidth]{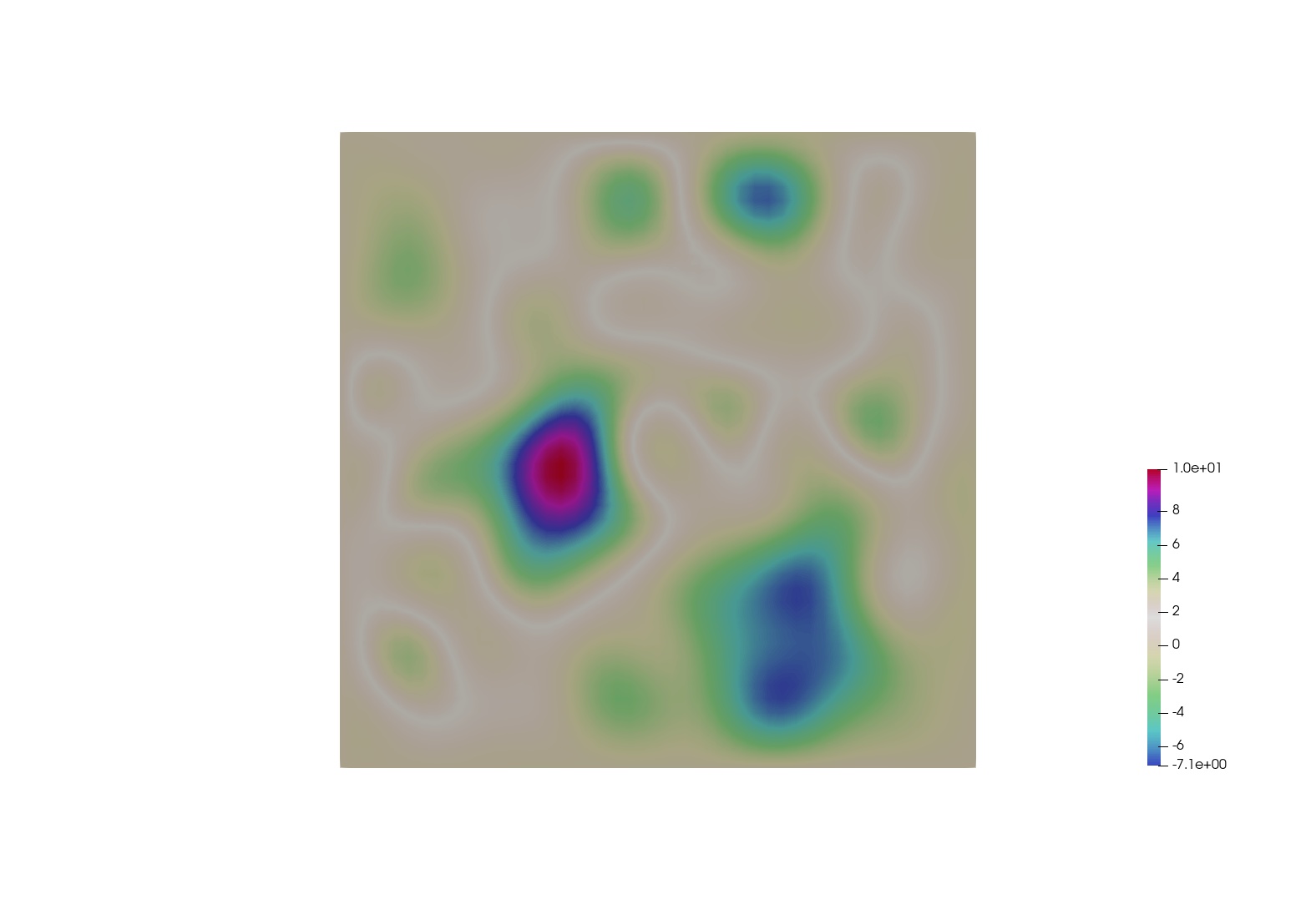}
			\caption{LANS-$\alpha$ pressure at $t = 0.586$}
		\end{subfigure}
		\begin{subfigure}{0.4\linewidth}
			\centering
			\includegraphics[width=\linewidth]{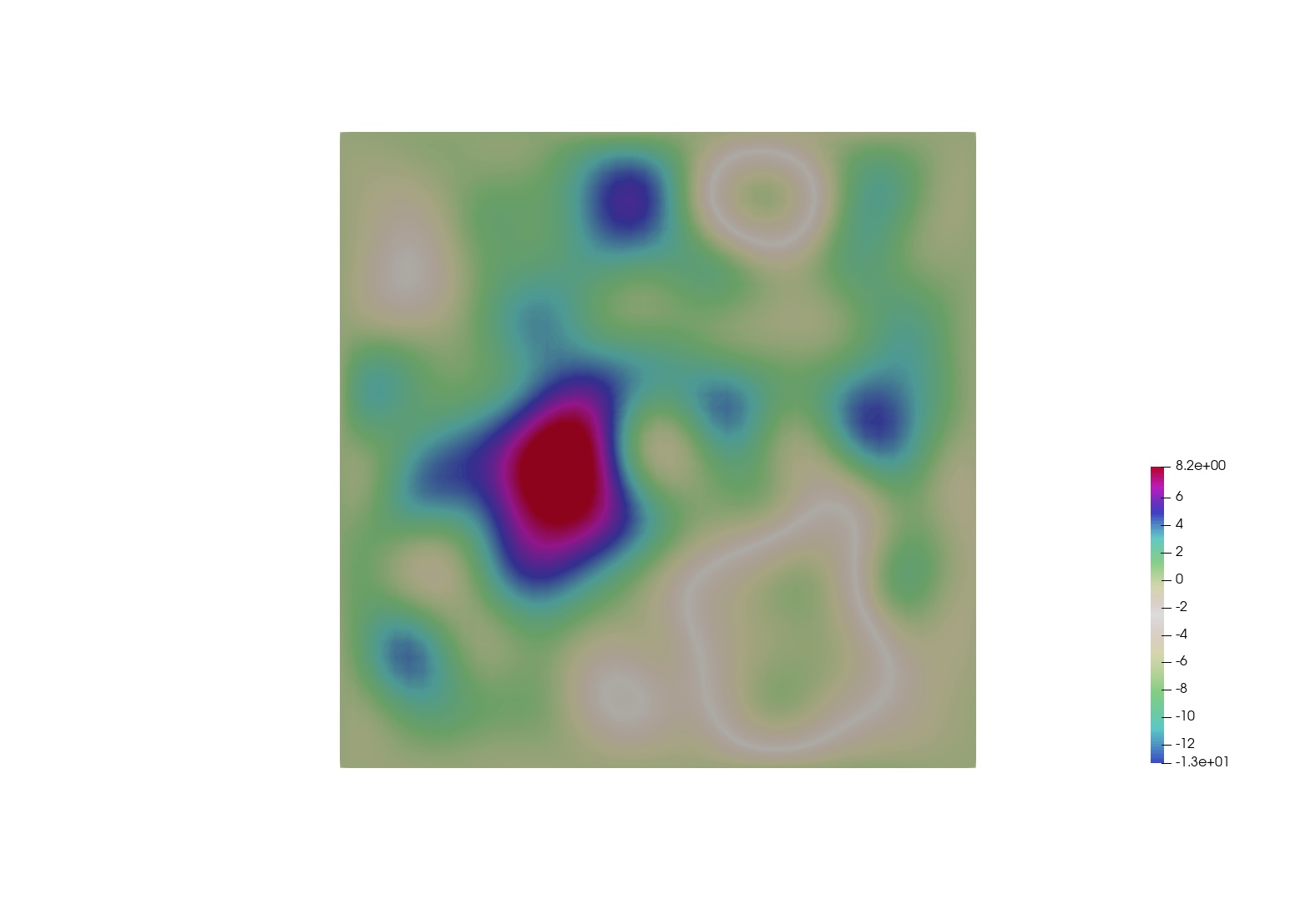}
			\caption{NS pressure at $t = 0.586$}
		\end{subfigure}
	\end{figure}
	
	\paragraph{Case $\alpha \geq L > \sqrt{k}/h$}\ \\
	For this framework, we set $\alpha = 1$, $h \approx 0.03$ and take $k$ in terms of $h$; namely $k = 0.9h^{2}$, so that the condition $\sqrt{k}/h < \alpha$ is met.
	\begin{figure}[ht]
		\centering
		\captionsetup[subfigure]{oneside,margin={-1.5cm,0cm}}
		\begin{subfigure}{0.36\linewidth}
			\hspace{-25pt}\includegraphics[width=\linewidth]{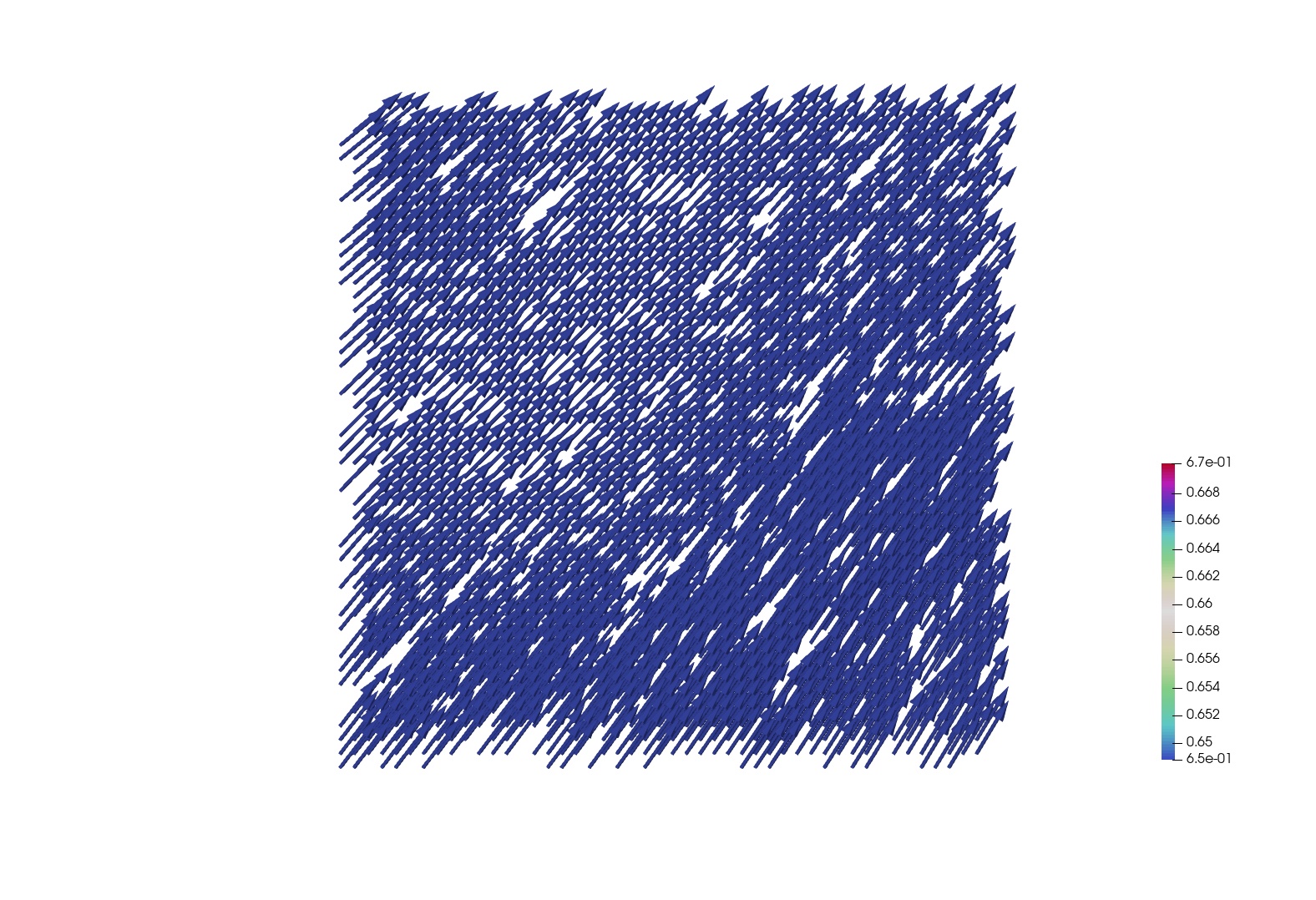}
			\caption{Time $t = 0.161$}
		\end{subfigure}
		\begin{subfigure}{0.36\linewidth}
			\centering
			\hspace{-55pt}\includegraphics[width=\linewidth]{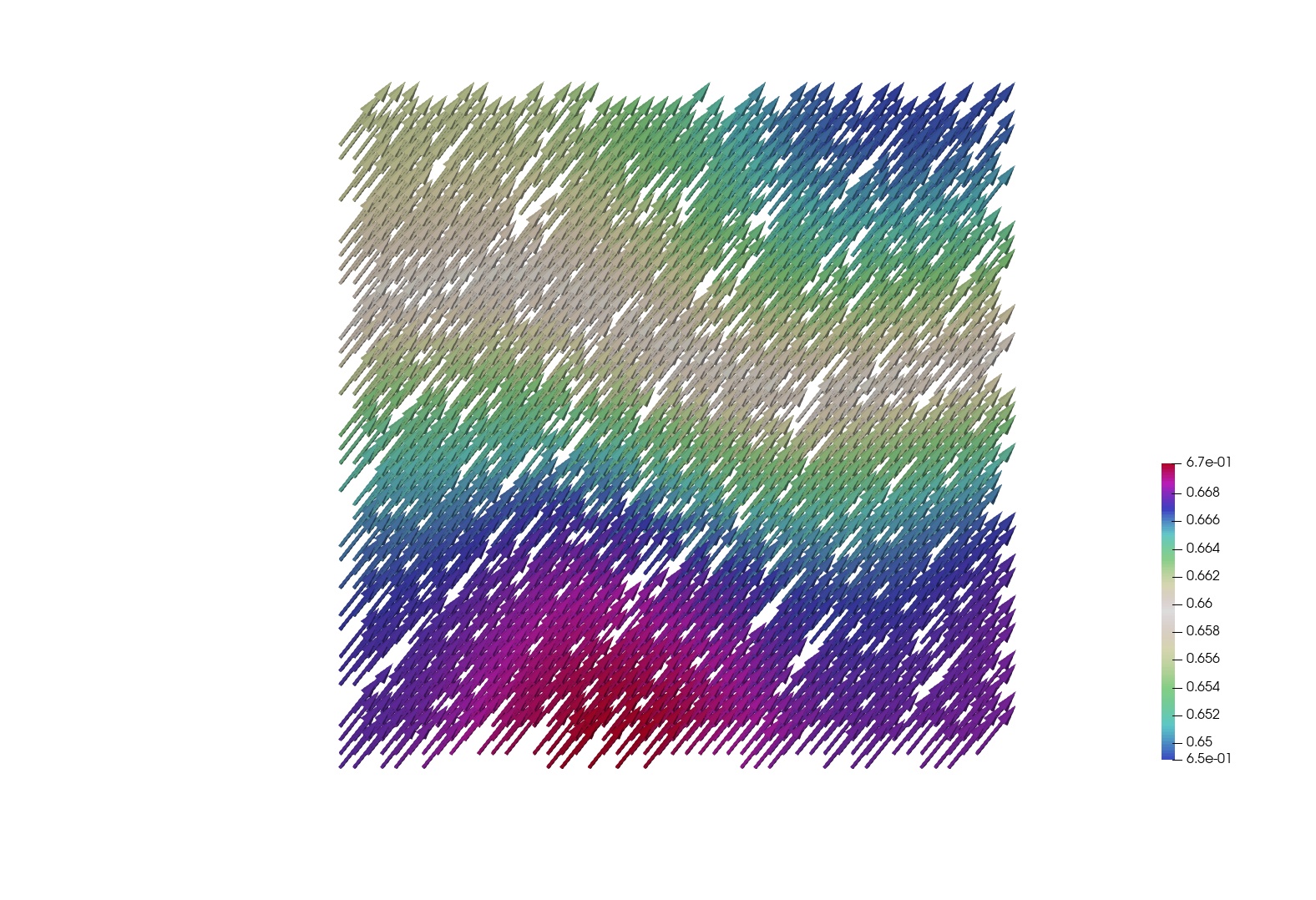}
			\caption{Time $t = 0.586$}
		\end{subfigure}
		\begin{subfigure}{0.36\linewidth}
			\centering
			\hspace{-45pt}\includegraphics[width=\linewidth]{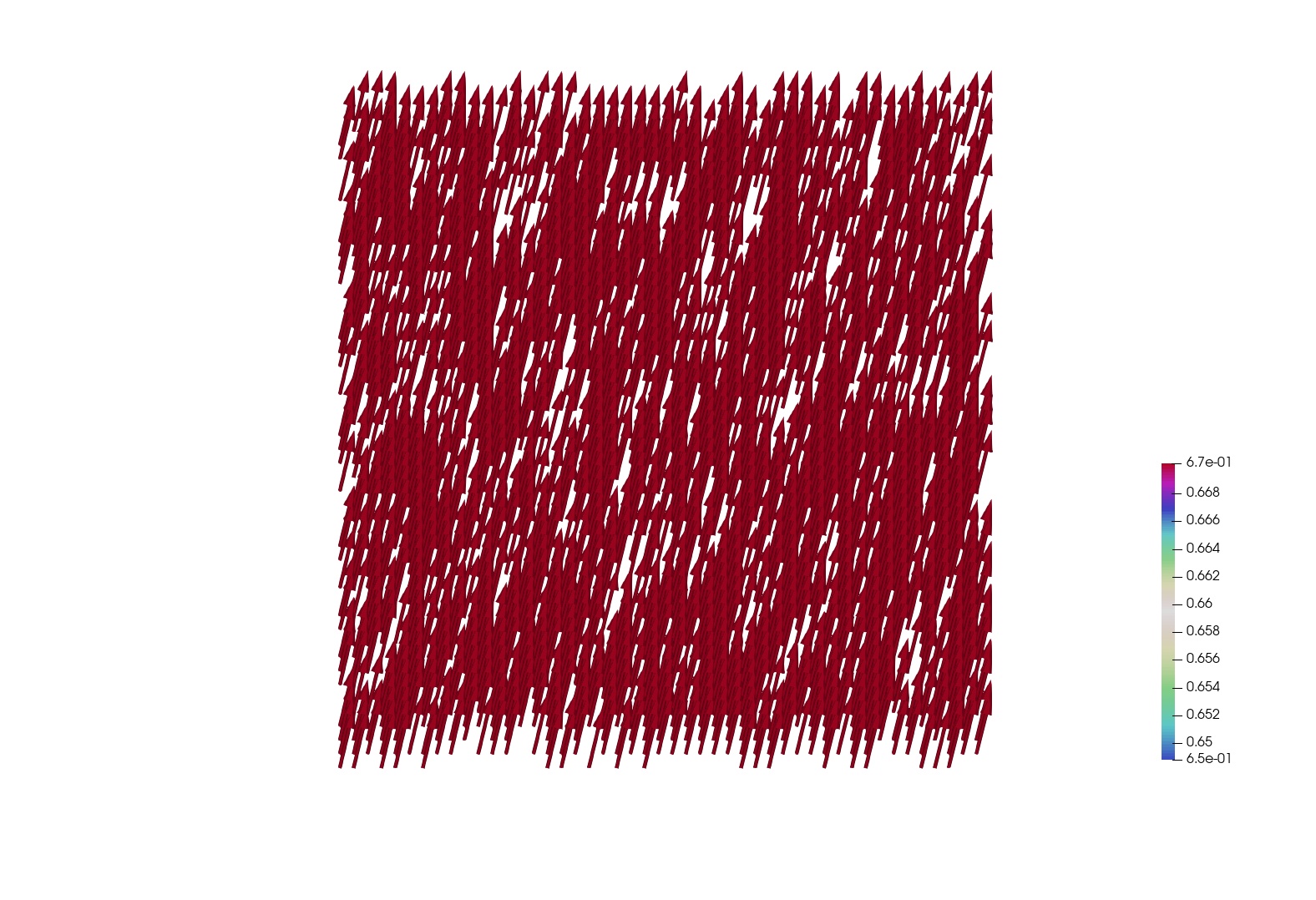}
			\caption{Time $t = 1$}
		\end{subfigure}
		\caption*{Velocity field of LANS-$\alpha$ evaluated at different time values}
	\end{figure}
	
	Within one realization, outcomes of the current case are clearly well-behaved, which means a higher regularity in terms of the velocity fields. It is worth mentioning the speed variation stage within the time interval $[0.45 - \varepsilon, 0.65 + \varepsilon]$, $\varepsilon << 1$ where the velocity value goes from its lowest to its highest rate. Below $t = 0.45$ and above $t = 0.65$, the velocity field maintains almost a constant value. Here are the associated pressure figures:
	\begin{figure}[ht]
		\centering
		\captionsetup[subfigure]{oneside,margin={-1.5cm,0cm}}
		\begin{subfigure}{0.36\linewidth}
			\hspace{-25pt}\includegraphics[width=\linewidth]{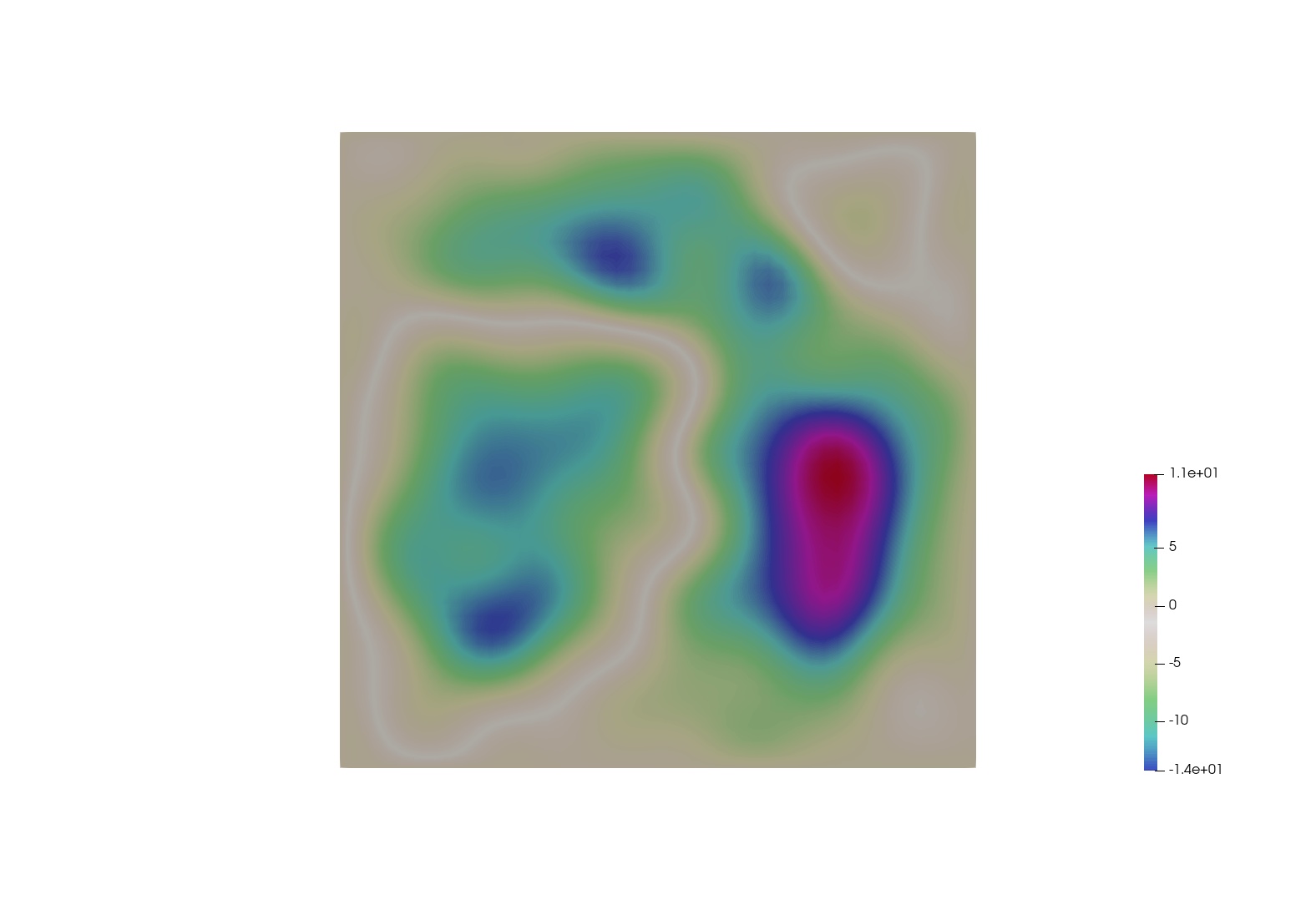}
			\caption{Time $t = 0.161$}
		\end{subfigure}
		\begin{subfigure}{0.36\linewidth}
			\centering
			\hspace{-55pt}\includegraphics[width=\linewidth]{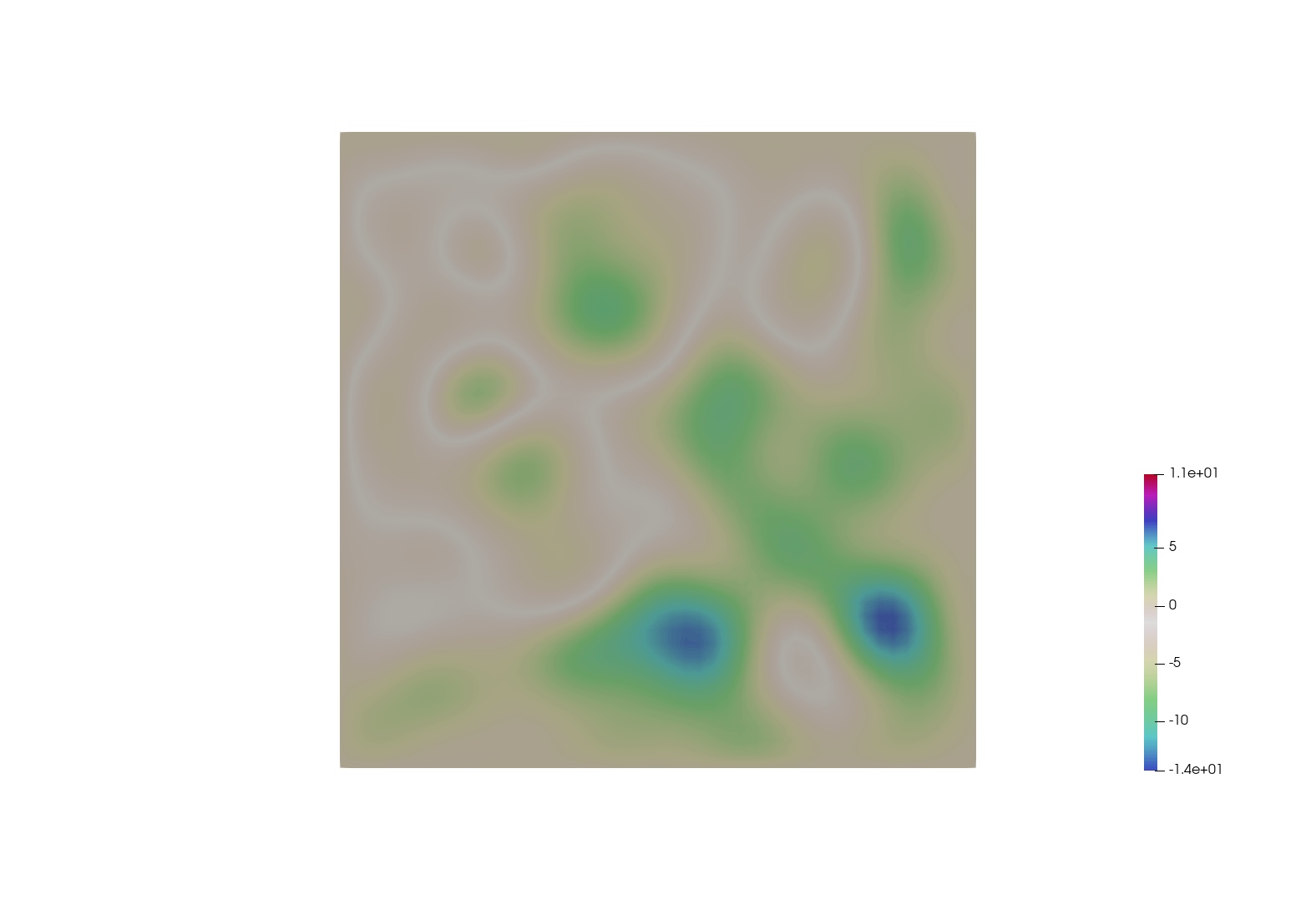}
			\caption{Time $t = 0.586$}
		\end{subfigure}
		\begin{subfigure}{0.36\linewidth}
			\centering
			\hspace{-45pt}\includegraphics[width=\linewidth]{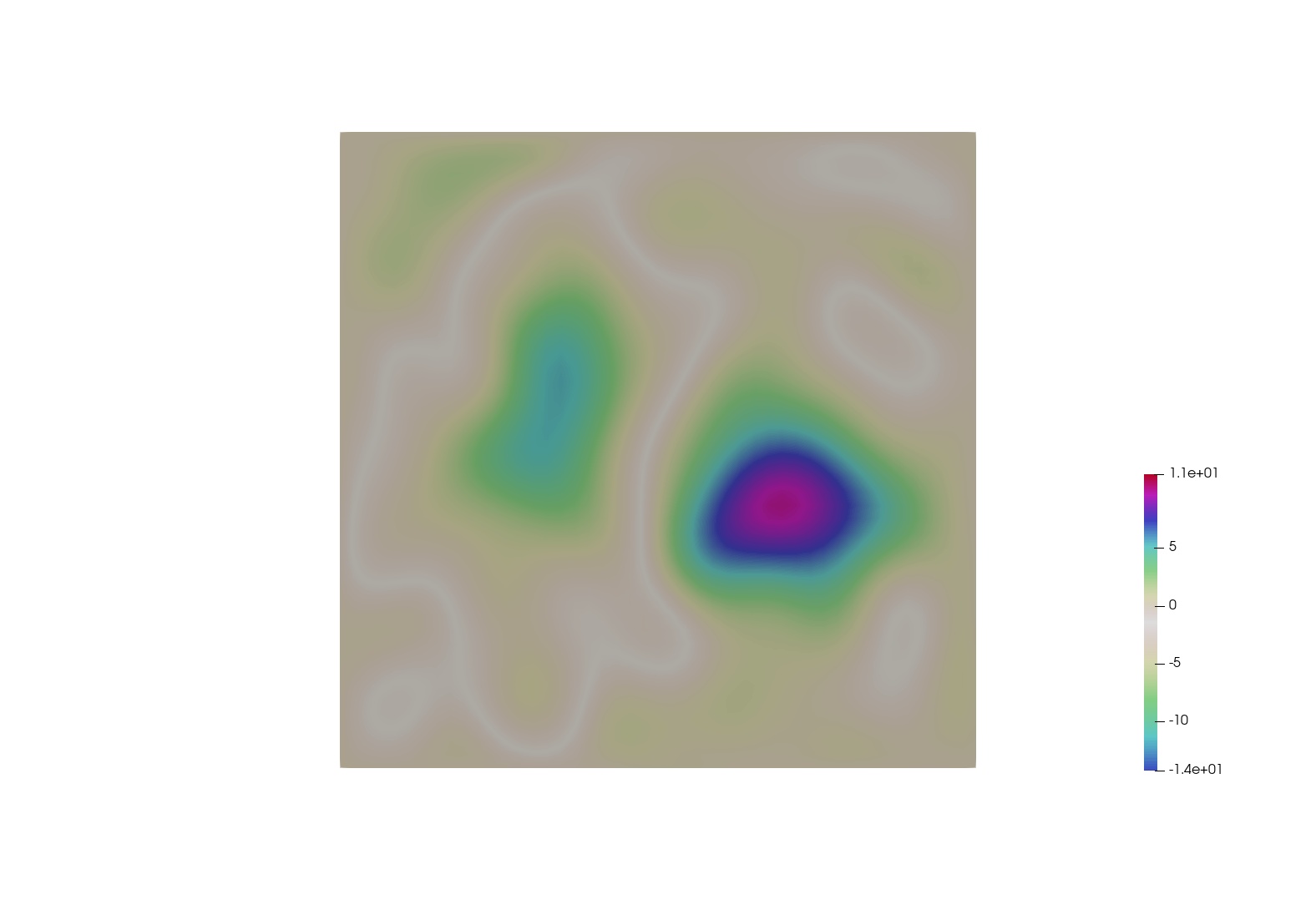}
			\caption{Time $t = 1$}
		\end{subfigure}
		\caption*{Pressure of LANS-$\alpha$ evaluated at different time values}
	\end{figure}
	
	We point out that in both cases, the pressure is heavily impacted by the noise. As its curve progresses in time, we notice a random behavior at each time node. This can be thought of as the stochastic pressure decomposition that was evoked in article~\cite{breit2021convergence} for the two-dimensional stochastic Navier-Stokes equations, which states that $p$ can be split into a few terms, one of which can be written in terms of the Wiener process $W$.
	
	\printbibliography
	
\end{document}